\newcommand\cyr{%
\renewcommand\rmdefault{wncyr}%
\renewcommand\sfdefault{wncyss}%
\renewcommand\encodingdefault{OT2}%
\normalfont
\selectfont}
\DeclareTextFontCommand{\textcyr}{\cyr} 
\definecolor{myurlcolor}{rgb}{0,0,0.4}
\definecolor{mycitecolor}{rgb}{0,0.5,0}
\definecolor{myrefcolor}{rgb}{0.5,0,0}
\newcommand{\beq}{\begin{displaymath}}
\newcommand{\eeq}{\end{displaymath}}
\newcommand{\Z}{\mathbb{Z}}
\newcommand{\ra}{\rightarrow}
\newcommand{\lra}{\longrightarrow}
\newcommand{\A}{\mathcal{A}}
\newcommand{\C}{\mathcal{C}}
\newcommand{\D}{\mathcal{D}}
\newcommand{\W}{\mathcal{W}}
\newcommand{\Iso}{\mathrm{Iso}}
\newcommand{\Mor}{\mathrm{Mor}}
\newcommand{\id}{\mathrm{id}}
\newcommand{\Loc}{\mathrm{Loc}}
\newcommand{\coker}{\mathrm{coker}}
\newcommand{\im}{\mathrm{im}}
\newcommand{\Ab}{\mathbf{Ab}}
\newcommand{\K}{\mathcal{K}}
\newcommand{\Q}{\mathcal{Q}}
\newtheorem{prop}{Proposition}[section]
\newtheorem{thm}[prop]{Theorem}
\newtheorem*{thm*}{Theorem}
\newtheorem{defn}[prop]{Definition}
\newtheorem{cor}[prop]{Corollary}
\newtheorem{lem}[prop]{Lemma}
\theoremstyle{definition} 
\newtheorem{ex}[prop]{Example}
\newtheorem{rem}[prop]{Remark}
\renewcommand{\labelenumi}{(\alph{enumi})}
\renewcommand{\theenumi}{(\alph{enumi})}
\title{Notes on Triangulated Categories}
\address{Tobias Fritz, Perimeter Institute for Theoretical Physics\\
Waterloo ON, Canada}
\email{tfritz@perimeterinstitute.ca}
\keywords{Triangulated category, long exact sequence, cohomology theory, Verdier localization}
\subjclass[2010]{Primary: 18E30; Secondary: 18E35, 18G10}
\begin{document}

\maketitle
\begin{abstract}
We give an elementary introduction to the theory of triangulated categories covering their axioms, homological algebra in triangulated categories, triangulated subcategories, and Verdier localization. We try to use a minimal set of axioms for triangulated categories and derive all other statements from these, including the existence of biproducts. We conclude with a list of examples.
\end{abstract}
\tableofcontents

\section{Introduction}
\label{examples}

Triangulated categories are a convenient framework for homology and cohomology---not just in topology, but in all branches of mathematics that use functors having long exact sequences. In fact, any (co)homology theory has or at least should have a triangulated category as its underlying structure, which then can be analyzed by the long exact sequences. The triangulation is the categorical structure which keeps track of the theory's exactness properties. Consequently, triangulated categories arise in many fields of mathematics; see Section~\ref{se:ex}.

These notes constitute an elementary exposition of the basic theory of triangulated categories, starting with the definition of triangulated category and a detailed discussion of the axioms. The later sections provide details on homological algebra in triangulated categories and an overview of Verdier localization. Although other expositions exist---such as~\cite{Nee} or~\cite[Ch.~IV]{GM} just to name two---they frequently use a redundant set of axioms, as was observed by May~\cite{May,MayAdd}. We have tried to keep the number of axioms in the definition at a minimum, and from these axioms we then derive consequences which are included as additional axioms in other expositions, such as the existence of biproducts (Corollary~\ref{biproductsexist}).

These notes are intended for a readership with some basic understanding of category theory, and additive categories in particular, up to familiarity with biproducts writing morphisms between biproducts as matrices of morphisms between the individual summand objects. Since we develop the theory abstractly and only sketch some examples afterwards in Section~\ref{se:ex}, it will be helpful to already have some motivation for studying triangulated categories when reading these notes.

\subsection*{Notation and Terminology.} In all commutative diagrams in which the names of the objects are irrelevant, these objects are simply denoted by fat dots ``$\bullet$''. Except in cases where a commutativity statement is explicitly made, all diagrams commute. However, some diagrams have anticommmutative squares indicated by ``$\circleddash$''. Identity morphisms are pictured as double lines, ``\!\!$\xymatrix{\ar@{=}[r]&}$\!\!''. 

\subsection*{Acknowledgements.} In some parts, these notes are an elaboration on May's exposition~\cite[Sec.~2]{MayAdd}; see also~\cite{May}. Other ideas originate from Neeman's book~\cite{Nee}, while a few may be my own. This is a revised version of the second chapter of my MSc thesis (Diplomarbeit) written at the University of M\"unster in 2007. I would like to thank Joachim Cuntz for guidance and Peter May for some crucial comments on this revised version. The writing of this revised version was made possible through the support of a grant from the John Templeton Foundation.

Last but not least, I would like to thank all contributors to the \href{http://ncatlab.org/nlab/show/HomePage}{nLab} for having created an invaluable resource on everything categorical. Learning (higher) category theory will never be the same again.

\section{Definition}
\label{definition}

The basic data of a triangulated category are a preadditive category $\C$ with zero object $0\in\C$ and an additive automorphism $\Sigma:\C\lra\C$, whose inverse we write as $\Sigma^{-1}$. In other words, $\C$ is a category whose hom-sets are abelian groups such that composition is bilinear, and $\Sigma:\C\lra\C$ is a functor having an inverse $\Sigma^{-1} : \C \lra \C$; we will derive the existence of biproducts in Corollary~\ref{biproductsexist}, so that $\C$ is actually necessarily additive. Given an object $A\in\C$, call $\Sigma A$ the \emph{suspension} of $A$, and $\Sigma^{-1}A$ the \emph{desuspension} of $A$. 

We consider triples of composable morphisms of the form
\beq
\xymatrix{X\ar[r]^f & Y\ar[r]^g & Z\ar[r]^h & {}\Sigma X}
\eeq
Note that the last object in the sequence has to be \textit{equal} to the suspension of the first object (isomorphism is not sufficient). Such triples will be called \emph{candidate triangles}. Often, a candidate triangle will be denoted simply by its triple of morphisms $(f,g,h)$, omitting explicit mention of its objects. A morphism of candidate triangles is a diagram
\beq
\xymatrix{X_1\ar[r]^{f_1}\ar[d] & Y_1\ar[r]^{g_1}\ar[d] & Z_1\ar[r]^{h_1}\ar[d] & {}\Sigma X_1\ar[d]\\
X_2\ar[r]^{f_2} & Y_2\ar[r]^{g_2} & Z_2\ar[r]^{h_2} & {}\Sigma X_2}
\eeq
where the last vertical arrow has to be the suspension of the first one. This defines a category of candidate triangles in $\C$ in the obvious way. In particular, a morphism of candidate triangles is an isomorphism if and only if all the vertical maps are isomorphisms.

\begin{defn}
A \emph{triangulated category} is a preadditive category $\C$ with a zero object $0\in\C$ and equipped with an additive automorphism $\Sigma:\C\lra\C$ and a distinguished class of candidate triangles (the \emph{triangulation}),
\beq
\xymatrix{X\ar[r]^f & Y\ar[r]^g & Z\ar[r]^h & {}\Sigma X}
\eeq
succinctly called \emph{triangles}, subject to the following axioms:
\end{defn}

\renewcommand{\labelenumi}{(T\arabic{enumi})}
\renewcommand{\theenumi}{(T\arabic{enumi})}

\begin{compactenum}
\item\label{T1} The sequence
\beq
\xymatrix{X\ar@{=}[r] & X \ar[r] & 0 \ar[r] & {}\Sigma X}
\eeq
is a triangle for every object $X\in\C$.
\item\label{T2} Given any morphism $\xymatrix{X\ar[r]^f & Y}$, it can be completed to a triangle
\beq
\xymatrix{X\ar[r]^f & Y\ar[r]^g & Z\ar[r]^h & {}\Sigma X}
\eeq
\item\label{T3} Given isomorphic candidate triangles, if one of them is a triangle, then so is the other.

\item\label{T4} If $(f,g,h)$ is a triangle, then so is the shifted triple $(g,h,-\Sigma f)$.
\item\label{T5} The \emph{composition axiom}. Given all morphisms except the dashed ones in a diagram like
\beq
\xymatrix{{\bullet}\ar@(ru,lu)[rr]^h\ar[rd]_f & & {\bullet}\ar@(ru,lu)[rr]^{g'}\ar[rd]^{h'} & & {\bullet}\ar@(ru,lu)@{-->}[rr]^{k''}\ar[rd]^{g''} & & {\bullet}\\
& {\bullet}\ar[ur]^g\ar[rd]_{f'} & & {\bullet}\ar@{-->}[ru]_{k'}\ar[rd]_{h''} & & {\bullet}\ar[ru]_{\Sigma f'} &\\
& & {\bullet}\ar@(rd,ld)[rr]_{f''}\ar@{-->}[ru]^{k} & & {\bullet}\ar[ru]_{\Sigma f} & &}
\eeq
where only the upper left triangle is assumed to commute, and the straight paths $(f,f',f'')$, $(g,g',g'')$ and $(h,h',h'')$ are assumed to be triangles, then there exist $k$, $k'$ and $k''$ such that the whole diagram commutes and $(k,k',k'')$ is a triangle as well.\footnote{Of course, $k''$ is uniquely determined by commutativity.}
\end{compactenum}\phantom{lineends\\}

\noindent Now for some detailed comments on the axioms. First, here is an alternative drawing of the composition axiom diagram, which might give a better picture of the triangles involved:
\beq
\xymatrix{&&&{\bullet}\ar@/^.5pc/[rr]_{h'}\ar@/^2.5pc/[rrrr]_{g'}&&{\bullet}\ar@/^.5pc/@{-->}[rr]_{k'}\ar@/^.5pc/[rd]_{h''}&&{\bullet}\ar@/^.5pc/@{-->}[rrrd]_{k''}\ar[rd]_{g''}&&&\\
{\bullet}\ar[rr]_f\ar@/^.5pc/[rrru]_h&&{\bullet}\ar[rr]_{f'}\ar[ru]_g&&{\bullet}\ar[rr]_{f''}\ar@/^.5pc/@{-->}[ru]_k&&{\bullet}\ar[rr]_{\Sigma f}&&{\bullet}\ar[rr]_{\Sigma f'}&&{\bullet}}
\eeq
The name of the axiom is due to viewing it as a diagram generated by the \emph{composition} $h=gf$.

\renewcommand{\labelenumi}{(\alph{enumi})}
\renewcommand{\theenumi}{(\alph{enumi})}

\begin{rem}
\label{defrems}
\begin{compactenum}
\item In a diagram where the objects are denoted by dots ``${\bullet}$'', it is tacitly assumed that for any (candidate) triangle, the last object is the suspension of the first.
\item In view of the upcoming Proposition~\ref{vanishcomp}, it is not surprising that the composition axiom does not make any commutativity premise besides $h=gf$, since commutativity of the rest of the diagram without the dashed arrows then is trivially implied.
\item In fact, the composition axiom is the only axiom which bounds the size of the triangulation from above, by postulating commutativity without assuming it as a premise; all other axioms state that something \emph{is} a triangle, while the composition axiom also gives an upper bound on the class of those candidate triangles which can be triangles. Without the composition axiom, taking \emph{all} candidate triangles to be triangles would yield a viable triangulation.
\item
\label{negativesexist}
It is not actually necessary to assume the existence of negatives (additive inverses) for morphisms in $\C$. It is sufficient to postulate only the structure of an abelian monoid on every morphism set $\C(X,Y)$, such that composition is bilinear and $\Sigma$ preserves addition. Then axiom~\ref{T4} can be replaced by:
\\
\begin{compactenum}
\item[(T4')]\label{T4'} If $(f,g,h)$ is a triangle, then there is a triangle $(g,h,\overline{f})$ such that $\overline{f}+\Sigma f=0$.
\end{compactenum}
\phantom{l\\}
Since $\Sigma$ is an automorphism, $\Sigma^{-1}\overline{f}$ is an additive inverse for $f$. Then by~\ref{T2}, any $f$ has an additive inverse, so that $\C$ is automatically preadditive. \ref{T4} now follows from~\ref{T4'} and the uniqueness of additive inverses.
\item\label{suspadd} Since biproducts can be defined in terms of a system of equations between morphisms~\cite[Sec.~VIII.2]{Mac}, any additive functor between preadditive categories preserves them. In particular, the suspension functor $\Sigma$ automatically preserves all biproducts that may exist in $\C$, meaning that there is a natural isomorphism $\Sigma(X\oplus Y)\cong \Sigma X \oplus \Sigma Y$ whenever $X\oplus Y$ exists.
\item\label{dualrem} Given a triangulated category $\C$, its dual $\C^{\mathrm{op}}$ is also canonically triangulated if we employ $\Sigma^{-1}$ as the suspension automorphism of $\C^{\mathrm{op}}$. Thus, all statements about triangulated categories have corresponding dual statements which are then automatically true. This is not obvious, since the axioms are not formulated in a self-dual manner, but see Remark~\ref{dual}.
\item Many statements on homological algebra in triangulated categories are analogues of well-known statements from the theory of abelian categories, if one regards triangles as the counterparts of short exact sequences. For example, given subobject inclusions
\beq
\xymatrix{A\:\ar@{^{(}->}[r] & B\:\ar@{^{(}->}[r] & C}
\eeq
in an abelian category, the isomorphism theorem $(C/A)/(B/A)\cong (C/B)$ is a close analogue of the composition axiom~\ref{T5}. This can be seen best by drawing the isomorphism theorem as a braid diagram of short exact sequences:
\beq
\xymatrix@!@-12pt{\\A\ar@(ru,lu)@{^{(}->}[rr]\ar@{^{(}->}[rd] && C\ar@{->>}[dr]\ar@(ru,lu)@{->>}[rr] && C/B\\
& B\ar@{->>}[rd]\ar@{^{(}->}[ru] && C/A\ar@{->>}[ur]\\
&& B/A\ar@{^{(}->}[ru]}
\eeq
Similarly, the yet-to-follow statements Remark~\ref{weakkernel} and Corollary~\ref{iso} correspond to the proposition that a morphism in an abelian category is an isomorphism if and only if its kernel and cokernel both vanish. Moreover, Lemma~\ref{3x3lemma} is an analogue of the nine lemma~\cite[13.5.6]{Sch1}.
\item Although widely used in many areas of mathematics, the notion of triangulated category is often not considered satisfactory. One problem is that taking mapping cones like in Proposition~\ref{mappingcone} is not a functorial operation as indicated by the non-uniqueness of filling morphisms (Remark~\ref{nonuniquefm}); another problem is that the formation of homotopy limits and homotopy colimits~\cite{Dugger,Riehl} cannot always be expressed in terms of data in the triangulated category alone. See Section~\ref{refine}.
\item There is an interesting higher-dimensional variant of~\ref{T5} which has been investigated by Schmidt~\cite{Schm}; Proposition~\ref{triplecomp} proves that part of it follows from the other axioms.
\item
Finally, there are numerous other variations both on the set of axioms and on the terminology: 
\begin{compactenum}
\item Many authors use the term ``distinguished triangle'' or ``exact triangle'' instead of the simpler ``triangle'' when referring to an element of the triangulation. Also, the literature knows several notions of ``candidate triangle'' and ``pre-triangle'' differing from the one given above.
\item Often $\Sigma$ is not required to be an automorphism, but merely an autoequivalence. This can lead to trouble when applying the desuspension functor to triangles, because 
\beq
\xymatrix{\Sigma^{-1}X\ar[r] & {\bullet}\ar[r] & {\bullet}\ar[r] & X}
\eeq
is not even a candidate triangle if $\Sigma(\Sigma^{-1}X)\neq X$. Some authors who take $\Sigma$ to be an autoequivalence seem to ignore this issue, however. Since equivalent categories are identical as far as the theory of the mathematical structures represented by them is concerned, this is not a problem and can be fixed by the following well-known construction: define a new category $\C'$ with objects the class of pairs 
\beq
(X,n)\quad\textrm{with}\quad X\in\C,\:n\in\Z
\eeq
and morphism sets
\beq
\C'((X,n),(Y,n)):=\C(\Sigma^n X,\Sigma^m Y)
\eeq
where composition of morphisms also is inherited from $\C$.
One can think of $(X,n)$ as being an $n$-fold formal suspension of $X$. Then the new suspension functor
\[
\Sigma'\: :\: \C'\lra\C',\qquad (X,n)\longmapsto (X,n+1)
\]
is an automorphism such that the pair $(\C',\Sigma')$ is equivalent to the pair $(\C,\Sigma)$.
\item Especially in the context of derived categories (Example~\ref{derived}), it is useful to replace $\C$ by an associated $\Z$-graded category $\C_\textrm{graded}$, where a morphism from $X$ to $Y$ of degree $n$ is an element of $\C(X,\Sigma^nY)$, so that
\beq
\C_\textrm{graded}(X,Y)=\bigoplus_{n\in\Z}\C(X,\Sigma^n Y).
\eeq
This emphasizes the interpretation of $\Sigma$ as shifting the dimension of an object by $1$. Candidate triangles then have the form
\beq
\xymatrix{X\ar[rr]^{\textrm{degree }0} && Y\ar[ld]^{\textrm{degree }0}\\
& Z\ar[lu]^{\textrm{degree }1}}
\eeq
This explains the terminology ``triangle''. Using this graded point of view is also a difference in notation and exposition; the theory is the same.
\item When using this graded notation, one can write the composition axiom in the form of an octahedron. This is why it is often known as the \emph{octahedral axiom}. A disadvantage of the octahedral shape is that the octahedron diagram cannot be drawn in the plane without self-intersections.
\item There are also minor variations on the axioms. For example, the upcoming Proposition~\ref{fillingmorphism} is usually taken to be an axiom, although it follows easily from the other axioms. Neeman~\cite{Nee} dispenses with the composition axiom or octahedral axiom completely and uses as his version that filling morphisms can be chosen such that they are ``good''. 
\item\label{biprodrem} Similarly, the category $\C$ is usually assumed to be additive. In contrast to this, we derive the existence of biproducts in $\C$ in Corollary~\ref{biproductsexist}.
\end{compactenum}
\end{compactenum}
\end{rem}

\section{Puppe sequences and homological functors}
\label{homalgebra}

We now study triangles in $\C$ and how they give rise to long exact sequences. This is the beginning of \emph{homological algebra} in a triangulated category.

\begin{prop}
\label{signs}
If $(f,g,h)$ is a triangle, then so are $(f,-g,-h)$, $(-f,g,-h)$ and $(-f,-g,h)$. In other words, we can insert two signs wherever we want.
\end{prop}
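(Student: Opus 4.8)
The plan is to derive each of the three assertions from axiom~\ref{T3}: since isomorphic candidate triangles are simultaneously triangles or not, it suffices to exhibit, for each claimed candidate triangle, an isomorphism of candidate triangles connecting it to the given triangle $(f,g,h)$. Because the three candidate triangles have the same objects $X,Y,Z,\Sigma X$ as $(f,g,h)$, I would look for vertical isomorphisms built from signed identities $\pm\id_X$, $\pm\id_Y$, $\pm\id_Z$, which are automatically isomorphisms.

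Concretely, a morphism of candidate triangles with vertical maps $(\alpha,\beta,\gamma)=(s_X\id_X,\,s_Y\id_Y,\,s_Z\id_Z)$, where $s_X,s_Y,s_Z\in\{\pm1\}$, from $(f,g,h)$ to a bottom row $(f',g',h')$ commutes precisely when $f'=s_Xs_Y f$, $g'=s_Ys_Z g$ and $h'=s_Zs_X h$; here the last relation uses that $\Sigma$ is additive, so that $\Sigma(s_X\id_X)=s_X\id_{\Sigma X}$. Choosing $(s_X,s_Y,s_Z)=(+,+,-)$ then yields $(f,-g,-h)$; the choice $(-,+,+)$ yields $(-f,g,-h)$; and $(-,+,-)$ yields $(-f,-g,h)$. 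In every case all three vertical maps are isomorphisms, so \ref{T3} applies and the bottom row is a triangle.

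There is no real obstacle here: commutativity of each square is a one-line verification, and the only point requiring a word of care is the last square, where additivity of $\Sigma$ is what converts a sign on $\alpha$ into the correct sign on $h$. It is worth noting that this signed-identity method produces exactly those sign patterns whose overall product is $+1$, namely those obtained by flipping an \emph{even} number of signs; the three cases in the statement are precisely the three ways of flipping two signs, which is why the proposition can be phrased as inserting two signs wherever we want.
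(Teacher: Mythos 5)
Your proof is correct and takes essentially the same approach as the paper: the paper applies~\ref{T3} to the isomorphism of candidate triangles with vertical maps $(\id_X,\id_Y,-\id_Z,\id_{\Sigma X})$ to obtain $(f,-g,-h)$, and notes the other two cases work similarly. Your systematic bookkeeping with sign patterns $(s_X,s_Y,s_Z)$ is just a uniform way of writing out all three of these signed-identity isomorphisms at once.
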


\begin{proof}
This immediately follows from~\ref{T3}, because
\beq
\xymatrix{ \bullet \ar[r]^f\ar@{=}[d] & \bullet\ar[r]^g\ar@{=}[d] & \bullet\ar[r]^h\ar[d]^{-\id_Z} & \bullet \ar@{=}[d]\\
\bullet \ar[r]_f & \bullet\ar[r]_{-g} & \bullet\ar[r]_{-h} & \bullet }
\eeq
is an isomorphism of candidate triangles, so that the second row $(f,-g,-h)$ is a triangle as well. The other two cases work similarly.
\end{proof}

In contrast, changing the sign of only one of the morphisms in a triangle does in general not yield a second triangle. However, we can define a new triangulation on $\C$ by taking $(f,g,h)$ as a triangle in a new triangulation if and only if its negative $(-f,-g,-h)$ is a triangle in the ``old'' triangulation. For more investigations on how unique the triangulation is when the category is given together with the suspension functor, see~\cite[Sec.~2.4]{May} or~\cite{Bal}.

The main features of triangulated categories are homological functors and long exact sequences; capturing these phenomena is what triangulated categories are for. Intuitively speaking, given a ``half-exact'' functor, we can construct its ``$n$-th derived functor'' by pre-composing it with $\Sigma^n$. We will soon be able to make this precise. 

By repeated application of~\ref{T4}, we can continue any triangle
\beq
\xymatrix{X\ar[r]^f & Y\ar[r]^g & Z\ar[r]^h & {}\Sigma X}
\eeq
to a sequence
\beq
\xymatrix{X\ar[r]^f & Y\ar[r]^g & Z\ar[r]^h & {}\Sigma X\ar[r]^{\Sigma f} & {}\Sigma Y\ar[r]^{\Sigma g} & {}\Sigma Z\ar[r]^{\Sigma h} & {}\Sigma^2 X\ar[r]^{\Sigma^2 f} & {}\ldots}
\eeq
which consists of an alternating sequence of triangles, like $(f,g,h)$, and negatives of triangles, like $(g,h,\Sigma f)$; that $(g,h,\Sigma f)$ is the negative of a triangle follows from the previous proposition together with~\ref{T4}. In algebraic topology, this infinite sequence of morphisms is known as the \emph{Puppe sequence}, and this is also the terminology that we employ. Using Puppe sequences, one can extend the composition axiom diagram to the braid diagram
\beq
\xymatrix{{\bullet}\ar@(ru,lu)[rr]^h\ar[rd]^f & & {\bullet}\ar@(ru,lu)[rr]^{g'}\ar[rd]^{h'} & & {\bullet}\ar@(ru,lu)[rr]^{k''}\ar[rd]^{g''} & & {\bullet}\ar@(ru,lu)[rr]^{\Sigma f''}\ar[rd]^{\Sigma k} & & {\bullet} & {}\cdots\\
& {\bullet}\ar[ur]_g\ar[rd]^{f'} & & {\bullet}\ar[ru]_{k'}\ar[rd]^{h''} & & {\bullet}\ar[ru]_{\Sigma f'}\ar[rd]^{\Sigma g} & & {\bullet}\ar[rd]^{\Sigma k'}\ar[ru]_{\Sigma h''} & &  {}\cdots\\
& & {\bullet}\ar@(rd,ld)[rr]_{f''}\ar[ru]_{k} & & {\bullet}\ar[ru]_{\Sigma f}\ar@(rd,ld)[rr]_{\Sigma h} & & {\bullet}\ar@(rd,ld)[rr]_{\Sigma g'}\ar[ru]_{\Sigma h'} & & {\bullet} & {}\cdots}
\eeq
where now Puppe sequences go along the strands. Commutativity simply follows from commutativity of the composition axiom diagram together with functoriality of $\Sigma$. 

We will see that the Puppe sequence as well as the braid diagram can actually both also be continued to the left by using $\Sigma^{-1}$. However, this requires a bit more machinery to prove.

\begin{prop}
\label{fillingmorphism}
Given a diagram
\beq
\xymatrix{X\ar[r]^f\ar[d]^j & Y\ar[r]^g\ar[d]^k & Z\ar[r]^h & {}\Sigma X\\
X'\ar[r]^{f'} & Y'\ar[r]^{g'} & Z'\ar[r]^{h'} & \Sigma X'}
\eeq
where the square is assumed commutative and the rows are triangles, there exists a \emph{filling morphism} $m:Z\lra Z'$ completing it to a morphism of triangles:
\beq
\xymatrix{X\ar[r]^f\ar[d]^j & Y\ar[r]^g\ar[d]^k & Z\ar[r]^h\ar@{-->}[d]^m & {}\Sigma X\ar[d]^{\Sigma j}\\
X'\ar[r]^{f'} & Y'\ar[r]^{g'} & Z'\ar[r]^{h'} & {}\Sigma X'}
\eeq
\end{prop}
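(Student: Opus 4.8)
The plan is to derive the filling morphism from the octahedral axiom~\ref{T5} by playing off the two factorizations of the diagonal composite of the commutative square. Write $p := kf = f'j \colon X \to Y'$; commutativity of the given square says precisely that these two factorizations of $p$ agree. Using~\ref{T2}, complete $p$ to a triangle $(p,q,r)$, say $X \to Y' \to P \to \Sigma X$, and also complete $k \colon Y \to Y'$ and $j \colon X \to X'$ to triangles; the cones of $k$ and $j$ enter only as auxiliary objects and will not appear in the final answer.

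First I would apply~\ref{T5} to the factorization $p = kf$, feeding in the given triangle $(f,g,h)$ as the triangle on $f$ and the chosen triangle $(p,q,r)$ as the triangle on the composite $p$. Among the morphisms it produces is the cone comparison $u \colon Z \to P$, and reading off the relevant faces of the octahedron gives $ug = qk$ and $ru = h$. Then I would apply~\ref{T5} a second time to the factorization $p = f'j$, now using the given triangle $(f',g',h')$ as the triangle on $f'$ and---crucially---the \emph{same} triangle $(p,q,r)$ on $p$. This produces a comparison $v \colon P \to Z'$ with $vq = g'$ and $h'v = (\Sigma j)\, r$.

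Finally I would set $m := vu \colon Z \to Z'$ and verify the two squares by direct substitution: $mg = v(ug) = v(qk) = (vq)k = g'k$ and $h'm = (h'v)u = (\Sigma j)(ru) = (\Sigma j)h$, which is exactly the claim. The work here is bookkeeping rather than conceptual, and that is where the only real obstacle lies: one must correctly match the intricate commutativity data of~\ref{T5} to the two situations, keeping track of which octahedral output map points in which direction, and one must be sure to insert the \emph{identical} triangle $(p,q,r)$ into both applications---otherwise $u$ and $v$ would sit over different objects and could not be composed. No uniqueness of $m$ is asserted (and indeed it fails in general, cf.\ Remark~\ref{nonuniquefm}), so nothing further is required.
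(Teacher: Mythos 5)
Your proof is correct and takes essentially the same approach as the paper: the paper likewise factors the square through a triangle on the diagonal composite $kf=f'j$ (your $(p,q,r)$, the paper's middle row with cone $W$) and obtains the filling morphism as the composite of the two comparison maps $Z\lra W$ and $W\lra Z'$ produced by two applications of the composition axiom. The only difference is presentational---the paper first isolates the special cases $j=\id$ and $k=\id$ and then stacks them, which corresponds exactly to your two octahedron applications sharing the same triangle over $p$.
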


Filling morphisms are among the main properties of triangulated categories. This might be why this proposition is often taken as part of the axiom system, although this is redundant since it follows from the other axioms. In the next subsection, we will see that filling morphisms are the crucial ingredient for the exactness properties of triangles.

\begin{proof}
Let us consider two easy cases first, namely those where one of the given vertical morphisms is an identity morphism. Start with $j=\id_X$. Then the alleged filling morphism $m_1:Z\lra Z'$ in
\beq
\xymatrix{X\ar[r]^f\ar@{=}[d] & Y\ar[r]^g\ar[d]^k & Z\ar[r]^h\ar@{-->}[d]^{m_1} & {}\Sigma X\ar@{=}[d]^{\Sigma j}\\
X\ar[r]^{f'} & Y'\ar[r]^{g'} & Z'\ar[r]^{h'} & {}\Sigma X}
\eeq
follows from an application of the composition axiom to the equation $f'=kf$ together with a third triangle obtained from $k$ via~\ref{T2}; recall that the composition axiom does not make any additional commutativity premise. In the same way, the filling morphism $m_2:Z\lra Z'$ with $k=\id_Y$ in the diagram
\beq
\xymatrix{X\ar[r]^f\ar[d]^j & Y\ar[r]^g\ar@{=}[d] & Z\ar[r]^h\ar@{-->}[d]^{m_2} & {}\Sigma X\ar[d]^{\Sigma j}\\
X'\ar[r]^{f'} & Y\ar[r]^{g'} & Z'\ar[r]^{h'} & {}\Sigma X'}
\eeq
also follows from the composition axiom applied to the equation $f=f'j$. Now the general case can be factored into two of these easy cases:
\beq
\xymatrix{X\ar[r]^f\ar@{=}[d] & Y\ar[r]^g\ar[d]^k & Z\ar[r]^h\ar@{-->}[d]^{m_1} & {}\Sigma X\ar@{=}[d]\\
X\ar[r]^{kf=f'j}\ar[d]^j & Y'\ar[r]\ar@{=}[d] & W\ar[r]\ar@{-->}[d]^{m_2} & {}\Sigma X\ar[d]^{\Sigma j}\\
X'\ar[r]^{f'} & Y'\ar[r]^{g'} & Z'\ar[r]^{h'} & {}\Sigma X'}
\eeq
where the triangle in the middle row was also obtained through an application of~\ref{T2}.
\end{proof}

However, the filling morphism $m:Z\lra Z'$ is in general not unique. Remark~\ref{nonuniquefm} will give a very general example of this phenomenon.

\begin{prop}
\label{vanishcomp}
In a triangle $(f,g,h)$, we have $gf=0$, $hg=0$ and $(\Sigma f)\circ h=0$.
\end{prop}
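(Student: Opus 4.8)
The plan is to reduce the whole statement to the single identity $gf=0$ and then obtain the other two relations by rotating the triangle with axiom~\ref{T4}. Concretely, once $gf=0$ is known for an \emph{arbitrary} triangle, I apply it to the rotated triangle $(g,h,-\Sigma f)$ --- which is a triangle by~\ref{T4} --- to read off $hg=0$, and then apply it once more to $(h,-\Sigma f,-\Sigma g)$ to get $(-\Sigma f)\circ h=0$, hence $(\Sigma f)\circ h=0$. Thus the entire proposition follows from its first assertion together with two iterations of~\ref{T4}.

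To prove $gf=0$, I would exploit the trivial triangle provided by~\ref{T1},
\[
\xymatrix{X\ar@{=}[r] & X\ar[r]^{0} & 0\ar[r]^{0} & {}\Sigma X,}
\]
and compare it with the given triangle $(f,g,h)$ along the commutative square whose vertical legs are $\id_X$ and $f$:
\[
\xymatrix{X\ar@{=}[r]\ar@{=}[d] & X\ar[d]^{f}\\ X\ar[r]^{f} & Y.}
\]
By the filling-morphism Proposition~\ref{fillingmorphism}, this square extends to a morphism of triangles
\[
\xymatrix{X\ar@{=}[r]\ar@{=}[d] & X\ar[r]^{0}\ar[d]^{f} & 0\ar[r]^{0}\ar@{-->}[d]^{m} & {}\Sigma X\ar@{=}[d]\\ X\ar[r]^{f} & Y\ar[r]^{g} & Z\ar[r]^{h} & {}\Sigma X,}
\]
and commutativity of the central square reads $g\circ f = m\circ 0 = 0$, which is exactly the desired relation.

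I do not anticipate a genuine obstacle; the points requiring care are conceptual rather than computational. The first is non-circularity: Proposition~\ref{fillingmorphism} is derived solely from~\ref{T1}, \ref{T2}, \ref{T3} and~\ref{T5}, and in particular does not presuppose the vanishing of any composite, so invoking it here is legitimate. The second, and the one I would emphasize, is that $gf=0$ gets established for a \emph{completely general} triangle, with no hypothesis beyond its being a triangle; this generality is precisely what licenses feeding the rotated triangles $(g,h,-\Sigma f)$ and $(h,-\Sigma f,-\Sigma g)$ back into the same statement. If one wishes to keep the bookkeeping of signs transparent, Proposition~\ref{signs} allows one to replace these rotated triangles by sign-adjusted ones without affecting which composites vanish.
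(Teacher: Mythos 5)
Your proof is correct, and while it shares the paper's overall skeleton---compare the given triangle against a degenerate triangle coming from~\ref{T1} via Proposition~\ref{fillingmorphism}, then propagate to the other two identities by rotating with~\ref{T4}---the bookkeeping is arranged differently, in a way that is slightly more economical. The paper takes $hg=0$ as the base identity: it maps the given triangle \emph{down} onto the degenerate triangle $X\lra 0\lra \Sigma X\stackrel{\id}{\lra}\Sigma X$, which must first be exhibited as a triangle using~\ref{T4} together with Proposition~\ref{signs}; after rotating twice, the paper lands on $\Sigma(gf)=0$ and must invoke the faithfulness of $\Sigma$ to descend to $gf=0$. You instead take $gf=0$ as the base identity, mapping the unrotated~\ref{T1} triangle $(\id_X,0,0)$ \emph{down into} the given triangle, so that the degenerate triangle needs no preparation at all; and since your rotations only ever move forward along the Puppe sequence, all three identities come out directly, with no appeal to faithfulness of $\Sigma$ and no use of Proposition~\ref{signs}. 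Both arguments rest on the same non-circularity observation you correctly flag: Proposition~\ref{fillingmorphism} is derived from the axioms alone and does not presuppose any vanishing of composites.
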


\begin{proof}
By~\ref{T1},~\ref{T4} and Proposition~\ref{signs}, $\xymatrix{X\ar[r] & 0\ar[r] & {}\Sigma X\ar@{=}[r] & {}\Sigma X}$ is a triangle. Since the premises of Proposition~\ref{fillingmorphism} are trivially verified, there is a filling morphism
\beq
\xymatrix{X\ar[r]^f\ar@{=}[d] & Y\ar[r]^g\ar[d] & Z\ar[r]^h\ar@{-->}[d]^m & {}\Sigma X\ar@{=}[d]\\
X\ar[r] & 0\ar[r] & \Sigma X\ar@{=}[r] & {}\Sigma X}
\eeq
However, by commutativity of the right square, we need to have $m=h$, and commutativity of the middle square then yields $hg=0$. In other words, for every triangle the composition of the second and third morphism vanishes. In particular, this is also the case for the triangles $(g,h,-\Sigma f)$ and $(h,-\Sigma f,-\Sigma g)$, so that $(\Sigma f)\circ h=0$ and $\Sigma(gf)=0$. But since $\Sigma$ is a faithful functor, we also have $gf=0$.
\end{proof}

Now is the time that we can turn to homology and see the powerful machinery of triangles in action.

\begin{defn}
An additive functor $H:\C\lra\Ab$ is called a \emph{homological functor} if $H$ is \emph{half-exact} in the sense that for every triangle
\beq
\xymatrix{X\ar[r]^f & Y\ar[r]^g & Z\ar[r]^h & {}\Sigma X}
\eeq
the sequence of abelian groups
\beq
\xymatrix{H(X)\ar[r]^{H(f)} & H(Y)\ar[r]^{H(g)} & H(Z)}
\eeq
is exact.
\end{defn}

Now since a Puppe sequence consists of alternating triangles and negatives of triangles, it follows that $H$ turns any Puppe sequence into a long exact sequence
\beq
\xymatrix{H(X)\ar[r]^{H(f)} & H(Y)\ar[r]^{H(g)} & H(Z)\ar[r]^{H(h)} & H(\Sigma X)\ar[r]^{H(\Sigma f)} & H(\Sigma Y)\ar[r]^{H(\Sigma g)} & {}\ldots}
\eeq
Thus for $n\geq 0$, we may regard $H\circ\Sigma^n$ as the $n$-th \emph{right derived} functor of $H$.

\begin{rem}
	If $H$ is contravariant, but otherwise the analogous condition holds, then $H$ is called a \emph{cohomological functor}. We will not really need this concept, but it is good to keep in mind that everything that follows applies to cohomological functors in the same way as it does to homological functors. Another point of view is that one can also define homological functors with values in an arbitrary abelian category possibly different from $\Ab$, and then a cohomological functor is simply a homological functor taking values in $\Ab^{\mathrm{op}}$.
\end{rem}

\begin{prop}
\label{homhom}
The $\mathrm{Hom}$-functor $\C(W,\cdot):\C\lra\Ab$ for any fixed object $W\in\C$ is homological.
\end{prop}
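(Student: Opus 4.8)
The plan is to check the two defining properties of a homological functor for $H=\C(W,\cdot)$, writing $f_*=f\circ(\cdot)$ for the induced map $\C(W,X)\lra\C(W,Y)$, and similarly for $g$. Additivity is immediate: since $\C$ is preadditive with bilinear composition, post-composition $\varphi\mapsto\psi\varphi$ is a group homomorphism in $\psi$, so $H$ carries sums of morphisms to sums and is additive. Exactness of $\C(W,X)\xrightarrow{f_*}\C(W,Y)\xrightarrow{g_*}\C(W,Z)$ at the middle then splits into two inclusions. The inclusion $\im f_*\subseteq\ker g_*$ is the easy half: for any $a\colon W\lra X$ we have $g_*(f_*a)=(gf)a=0$, because $gf=0$ by Proposition~\ref{vanishcomp}.

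The substance is the reverse inclusion $\ker g_*\subseteq\im f_*$: given $b\colon W\lra Y$ with $gb=0$, I must produce $a\colon W\lra X$ with $fa=b$. The idea is to realize $a$ as a filling morphism via Proposition~\ref{fillingmorphism}, which produces a map into the \emph{third} object of the bottom row. To place $X$ into that slot I rotate the triangle $(f,g,h)$ backwards twice using~\ref{T4} (together with the sign freedom of Proposition~\ref{signs}), obtaining the triangle
\beq
\Sigma^{-1}Y\xrightarrow{-\Sigma^{-1}g}\Sigma^{-1}Z\xrightarrow{-\Sigma^{-1}h}X\xrightarrow{f}Y.
\eeq
For the top row I take a trivial triangle furnished by~\ref{T1} on the object $\Sigma^{-1}W$, then rotate and sign-adjust it (again via~\ref{T4} and Proposition~\ref{signs}) to the form $\Sigma^{-1}W\xrightarrow{0}0\xrightarrow{0}W\xrightarrow{\id_W}W$, whose third morphism is $\id_W$ and whose final object is $W=\Sigma(\Sigma^{-1}W)$.

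I then assemble the morphism of triangles whose first vertical is $\Sigma^{-1}b\colon\Sigma^{-1}W\lra\Sigma^{-1}Y$ and whose second vertical is the zero map $0\lra\Sigma^{-1}Z$. The commutativity premise of Proposition~\ref{fillingmorphism} for the left square reads $(-\Sigma^{-1}g)\circ(\Sigma^{-1}b)=0$, i.e.\ $-\Sigma^{-1}(gb)=0$, which holds \emph{precisely} because $gb=0$; this is the one place the hypothesis enters. Proposition~\ref{fillingmorphism} then delivers a filling morphism $a\colon W\lra X$ in the third vertical, while the fourth vertical is forced to be $\Sigma(\Sigma^{-1}b)=b$. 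Commutativity of the rightmost square, whose top morphism is $\id_W$ and whose bottom morphism is $f$, reads $fa=b\circ\id_W=b$. Hence $b=f_*a\in\im f_*$, which finishes the exactness.

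The step I expect to be the main obstacle is not conceptual but organizational: arranging the rotations so that $X$ lands in the filling slot while $W$ (carrying $b$) lands in the suspension slot, and keeping the signs introduced by~\ref{T4} under control. By Proposition~\ref{signs} any candidate triangle differing from a triangle by two signs is again a triangle, and signs change neither kernels nor images, so all of this sign bookkeeping is harmless; the genuinely load-bearing ingredients are the vanishing $gf=0$ for one inclusion and the single commutativity check $gb=0$ that makes the filling proposition applicable for the other.
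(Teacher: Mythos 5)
Your overall strategy---realize the lift as a filling morphism against a trivial triangle built on the test object---is exactly right, and is in fact the paper's strategy too; but there is one genuine gap that, within this paper's development, is fatal: the backward rotation you use is not available. Axiom~\ref{T4} only rotates \emph{forward}: from $(f,g,h)$ it produces $(g,h,-\Sigma f)$, then $(h,-\Sigma f,-\Sigma g)$, and so on, all of whose objects lie among $X,Y,Z,\Sigma X,\Sigma Y,\dots$; no combination of forward rotations with the two-sign freedom of Proposition~\ref{signs} can ever produce a triangle beginning at $\Sigma^{-1}Y$. The claim you need---that $(f,g,h)$ being a triangle implies the backward-rotated triple $(-\Sigma^{-1}g,-\Sigma^{-1}h,f)$ is one---is precisely the ``if'' direction of Proposition~\ref{shift}. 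In this paper that statement is proved \emph{after} Proposition~\ref{homhom}, and its proof goes through Proposition~\ref{mappingcone}, which rests on Corollary~\ref{yonedacor}, which rests on Proposition~\ref{homhom} itself. So, as written, your argument is circular: it quietly invokes a consequence of the very proposition being proved. (Note also that desuspension of triangles is not an axiom here, so there is no shortcut via applying $\Sigma^{-1}$ to the given triangle either.)

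The fix is to run your diagram one suspension higher, which is what the paper does. Take as bottom row the forward rotation $(g,h,-\Sigma f)$ of the given triangle, and as top row the forward rotation
\beq
\xymatrix{W\ar[r] & 0\ar[r] & \Sigma W\ar@{=}[r] & \Sigma W}
\eeq
of the \ref{T1}-triangle on $W$ (this needs only \ref{T1}, \ref{T4} and Proposition~\ref{signs}). The verticals $v$ and $0$ make the left square commute because $gv=0$, and Proposition~\ref{fillingmorphism} yields a filling morphism $k:\Sigma W\lra\Sigma X$ with $(-\Sigma f)\circ k=\Sigma v$; setting $u:=-\Sigma^{-1}k$ gives $fu=v$, using that $\Sigma$ is an automorphism. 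This is exactly your argument with every object suspended once, and it uses only rotations that the axioms actually provide.
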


\begin{proof}
Let a triangle
\beq
\xymatrix{X\ar[r]^f & Y\ar[r]^g & Z\ar[r]^h & {}\Sigma X}
\eeq
be given. By the previous proposition, it is clear that $H(g)\circ H(f)=H(gf)=0$ for any additive functor $H$, and this holds for $\C(W,\cdot)$ in particular. To establish exactness of
\beq
\xymatrix{ {\C(W,X)}\ar[r]^{f\circ} & {\C}(W,Y)\ar[r]^{g\circ} & {\C}(W,Z)}
\eeq
it thus remains to be shown that, given some $v\in\C(W,Y)$ with $gv=0$, there is some $u\in\C(W,X)$ with $v=fu$. Since $\Sigma$ is an automorphism, it is sufficient to exhibit a morphism $k$ with $\Sigma v=(-\Sigma f)\circ k$, since then we can take $u:=-\Sigma^{-1}k$. Such a $k$ in turn can be obtained as a filling morphism in the diagram
\beq
\xymatrix{W\ar[r]\ar[d]^v & 0 \ar[r]\ar[d] & {}\Sigma W\ar@{=}[r]\ar@{-->}[d]^k & {}\Sigma W\ar[d]^{\Sigma v}\\
Y\ar[r]^g & Z\ar[r]^h & {}\Sigma X\ar[r]^{-\Sigma f} & {}\Sigma Y}
\eeq
which completes the proof.
\end{proof}

\begin{rem}
\label{brownrep}
If the triangulated category satisfies certain conditions which often hold in practice, it can actually be shown that a homological functor is representable, meaning that it is naturally isomorphic to one of the form $\C(W,\cdot)$ for some appropriate $W\in\C$, if and only if it preserves products. Dually, there are conditions which guarantee that a cohomological functor is of the form $\C(\cdot,W)$ if and only if it turns coproducts into products. This is a general form of \emph{Brown representability} familiar from algebraic topology. For more on this topic, see~\cite[Ch. 8]{Nee}.
\end{rem}

On several occasions, the Yoneda lemma will be very useful in our context. The following version is perfectly sufficient for our purposes:

\begin{lem}[{Yoneda lemma~\cite[III.2]{Mac}}]
\label{yoneda}
Suppose $f:X\ra Y$ is a morphism such that the natural transformation
\beq
\C(\cdot,f)\: :\: \C(\cdot,X)\lra\C(\cdot,Y),\qquad g\longmapsto fg
\eeq
of contravariant functors is an isomorphism. Then $f$ itself is an isomorphism.
\end{lem}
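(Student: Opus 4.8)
The plan is to deduce the (weak) Yoneda lemma stated here from the standard Yoneda lemma applied to the functor category of contravariant functors $\C^{\mathrm{op}}\lra\mathbf{Set}$ (or, since everything is additive, to $\Ab$-valued functors). The key observation is that a morphism $f:X\ra Y$ induces the natural transformation $\C(\cdot,f)$ between representable functors, and by the full version of the Yoneda lemma the assignment $f\mapsto\C(\cdot,f)$ is a bijection $\C(X,Y)\cong\mathrm{Nat}\bigl(\C(\cdot,X),\C(\cdot,Y)\bigr)$. Under this bijection, composition of natural transformations corresponds to composition of morphisms, so the whole construction $X\mapsto\C(\cdot,X)$, $f\mapsto\C(\cdot,f)$ is a full and faithful functor $\C\lra[\C^{\mathrm{op}},\mathbf{Set}]$, the (contravariant) Yoneda embedding.

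First I would recall that a full and faithful functor \emph{reflects isomorphisms}: if $F$ is full and faithful and $Ff$ is an isomorphism, then $f$ is an isomorphism. This is the abstract statement behind what we want to prove. So the strategy reduces to two ingredients: establish that $\C(\cdot,-)$ is full and faithful, and then invoke the general fact that full-and-faithful functors reflect isomorphisms.

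To make this self-contained rather than citing a black box, I would argue directly. Assume $\C(\cdot,f)$ is an isomorphism of functors, with inverse natural transformation $\eta:\C(\cdot,Y)\lra\C(\cdot,X)$. Evaluating at the object $Y$ and chasing the identity $\id_Y\in\C(Y,Y)$ through $\eta_Y$ produces a morphism $g:=\eta_Y(\id_Y)\in\C(Y,X)$. The naturality of $\eta$ applied to $g$ itself, together with the fact that $\eta$ inverts $\C(\cdot,f)$ (so that $f\circ\eta_Y(\id_Y)=\id_Y$ on one side), gives $fg=\id_Y$. For the other composite, one evaluates at $X$ and chases $\id_X$: naturality forces $gf=\id_X$ as well. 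Thus $g$ is a two-sided inverse for $f$, and $f$ is an isomorphism.

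The main obstacle, and the only point requiring genuine care, will be the element-chasing in the last step: verifying $gf=\id_X$ cleanly. The cleanest route is to use that $\C(\cdot,f)$ being invertible means each component $\C(W,f):\C(W,X)\ra\C(W,Y)$, $u\mapsto fu$, is a bijection of sets. Taking $W=X$ shows $u\mapsto fu$ is injective on $\C(X,X)$; then from $fg=\id_Y$ one computes $f(gf)=(fg)f=f=f\circ\id_X$, and injectivity of $\C(X,f)$ immediately yields $gf=\id_X$. This reformulation via the pointwise bijectivity of the components sidesteps fiddly naturality-square diagrams and makes the argument essentially a one-line cancellation, so I expect the proof to be short.
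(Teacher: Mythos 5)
Your proof is correct. Note, however, that the paper itself gives no proof of this lemma at all: it is stated as a known result with a citation to Mac Lane (III.2), and the author explicitly remarks that this weak form of the Yoneda lemma is ``perfectly sufficient for our purposes.'' So there is nothing in the paper to match your argument against step by step; what you have done is supply the standard self-contained justification that the citation points to. Your argument is sound: invertibility of the natural transformation $\C(\cdot,f)$ means in particular that every component $\C(W,f)\: :\: \C(W,X)\lra\C(W,Y)$, $u\longmapsto fu$, is a bijection; surjectivity at $W=Y$ produces $g$ with $fg=\id_Y$, and injectivity at $W=X$ applied to $f(gf)=(fg)f=f=f\circ\id_X$ yields $gf=\id_X$. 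As you observe, this final version does not even use naturality of the inverse transformation, only its pointwise bijectivity, which makes the proof a genuine one-liner; the earlier detour through the full Yoneda embedding and the fact that fully faithful functors reflect isomorphisms is a correct but heavier framing of the same content. Either way, your write-up would serve as a valid replacement for the paper's external citation.
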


\begin{cor}
\label{yonedacor}
If $H(f)$ is an isomorphism for all homological functors $H$, then the morphism $f$ itself is an isomorphism.
\end{cor}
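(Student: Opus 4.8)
The plan is to feed the hypothesis the specific homological functors supplied by Proposition~\ref{homhom}, namely the covariant Hom-functors $\C(W,\cdot)$, and then to invoke the Yoneda lemma. The key observation is that the hypothesis of Lemma~\ref{yoneda}---that the natural transformation $\C(\cdot,f):\C(\cdot,X)\lra\C(\cdot,Y)$ be an isomorphism---amounts to nothing more than the assertion that every one of its components is an isomorphism, and these components are exactly the maps obtained by applying the functors $\C(W,\cdot)$ to $f$.

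In more detail, I would first note that for every object $W\in\C$ the functor $\C(W,\cdot):\C\lra\Ab$ is homological by Proposition~\ref{homhom}. Hence the standing hypothesis applies to each such $H=\C(W,\cdot)$ and tells us that
\beq
\C(W,f)\: :\: \C(W,X)\lra\C(W,Y),\qquad u\longmapsto fu
\eeq
is an isomorphism of abelian groups, and in particular a bijection, for every $W$.

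Next I would recognise that the family of these maps, as $W$ ranges over $\C$, is precisely the natural transformation $\C(\cdot,f)$ of contravariant functors appearing in the Yoneda lemma: its component at $W$ sends $g\mapsto fg$, which is the same map as above. Since every component is an isomorphism, the natural transformation $\C(\cdot,f)$ is an isomorphism. Lemma~\ref{yoneda} then yields immediately that $f$ is an isomorphism, which completes the proof.

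There is no serious obstacle here; the entire content lies in the bookkeeping of the previous paragraph, matching the covariant Hom-functors (which are the ones known to be homological) against the contravariant-functor formulation of the Yoneda lemma. One need only keep straight that post-composition by $f$ can be read either as the action of the covariant functor $\C(W,\cdot)$ on the morphism $f$ or as the $W$-component of the contravariant natural transformation $\C(\cdot,f)$.
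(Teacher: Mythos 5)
Your proof is correct and is essentially the paper's own argument: apply the hypothesis to the homological functors $\C(W,\cdot)$ from Proposition~\ref{homhom} and then invoke Lemma~\ref{yoneda}. The extra bookkeeping you spell out---identifying $\C(W,f)$ with the $W$-component of the natural transformation $\C(\cdot,f)$---is left implicit in the paper but is exactly the same step.
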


\begin{proof}
For any object $W\in\C$, we have the homological functor $\C(W,\cdot)$. So by the assumption, $\C(W,f)\: :\: \C(W,X)\lra\C(W,Y)$ is an isomorphism for all $W$. The Yoneda lemma applies to show that $f$ is an isomorphism itself.
\end{proof}

At times, it is useful to apply this result to detect that certain candidate triangles are in fact triangles:

\begin{cor}
Let
\[
\xymatrix{ \bullet \ar[r] \ar[d]^\alpha & \bullet \ar[r] \ar[d]^\beta & \bullet \ar[r] \ar[d]^\gamma & \bullet \ar[d]^{\Sigma\alpha} \\
	\bullet \ar[r] & \bullet \ar[r] & \bullet \ar[r] & \bullet }
\]
be a morphism of candidate triangles. If $H(\alpha)$, $H(\beta)$ and $H(\gamma)$ are isomorphisms under any homological functor $H$ and either the first or the second row is a triangle, then so is the other.
\end{cor}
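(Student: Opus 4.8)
The plan is to reduce the statement to axiom~\ref{T3} by upgrading the given morphism of candidate triangles to an \emph{isomorphism} of candidate triangles, i.e.\ by showing that all four of its vertical components are isomorphisms. The hypothesis is tailored precisely for Corollary~\ref{yonedacor}: since $H(\alpha)$ is an isomorphism for every homological functor $H$, that corollary immediately yields that $\alpha$ itself is an isomorphism, and the identical argument applies verbatim to $\beta$ and to $\gamma$.

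This settles three of the four vertical maps, and the fourth comes for free. Indeed, $\Sigma\alpha$ is automatically an isomorphism: $\alpha$ is an isomorphism by the previous step, and $\Sigma$ is an additive automorphism, hence in particular a functor, so it carries isomorphisms to isomorphisms (concretely, $\Sigma(\alpha^{-1})$ is a two-sided inverse of $\Sigma\alpha$). Thus all four vertical maps $\alpha$, $\beta$, $\gamma$, $\Sigma\alpha$ are isomorphisms, and by the criterion recorded in Section~\ref{definition}---a morphism of candidate triangles is an isomorphism exactly when all its vertical components are---the whole diagram is an isomorphism of candidate triangles. Applying~\ref{T3} then transfers the property of being a triangle between the two rows in either direction, which is the desired conclusion.

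There is no genuine obstacle here; the entire content sits in the two auxiliary results already in hand, and the corollary is really just their assembly together with~\ref{T3}. The only point that warrants a moment's care is that the hypothesis must be invoked for \emph{all} homological functors simultaneously, so that Corollary~\ref{yonedacor} is applicable, rather than for a single chosen representable functor $\C(W,\cdot)$; it is this uniform quantification over $H$ that licenses the passage from isomorphisms of abelian groups to an isomorphism of objects in $\C$.
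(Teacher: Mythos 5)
Your proof is correct and follows exactly the route the paper intends: the paper simply states that the result ``is immediate from Corollary~\ref{yonedacor} together with~\ref{T3}'', and your argument---using Corollary~\ref{yonedacor} to make $\alpha$, $\beta$, $\gamma$ (and hence $\Sigma\alpha$, by functoriality of $\Sigma$) into isomorphisms, then invoking~\ref{T3}---is precisely the spelled-out version of that.
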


The proof is immediate from Corollary~\ref{yonedacor} together with~\ref{T3}.

\section{Mapping cones and their weak functoriality}
\label{mappingconessection}

\begin{prop}
\label{mappingcone}
If in the diagram
\beq
\xymatrix{{\bullet}\ar[r]\ar[d]^f & {\bullet}\ar[r]\ar[d]^g & {\bullet}\ar[r]\ar@{-->}[d]^h & {\bullet}\ar[d]^{\Sigma f}\\
{\bullet}\ar[r] & {\bullet}\ar[r] & {\bullet}\ar[r] & {\bullet}}
\eeq
where the rows are triangles, both $f$ and $g$ are isomorphisms, then any filling morphism $h$ is also an isomorphism.
\end{prop}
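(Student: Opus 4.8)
The plan is to push everything through an arbitrary homological functor, reduce to a five-lemma argument in $\Ab$, and then detect the conclusion back in $\C$ via Corollary~\ref{yonedacor}. To fix notation, write the top triangle as $\xymatrix{X\ar[r]^a & Y\ar[r]^b & Z\ar[r]^c & \Sigma X}$ and the bottom one as $\xymatrix{X'\ar[r]^{a'} & Y'\ar[r]^{b'} & Z'\ar[r]^{c'} & \Sigma X'}$, with vertical maps $f,g,h,\Sigma f$ as in the statement.

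First I would extend the given morphism of triangles to a morphism of the full Puppe sequences. Applying $\Sigma$ to the three commuting squares and using functoriality of $\Sigma$ together with~\ref{T4}, the commutative ladder continues indefinitely to the right, with vertical maps $f,g,h,\Sigma f,\Sigma g,\Sigma h,\ldots$. Each newly created square commutes because the corresponding square one step to the left does; the signs appearing along the Puppe sequence occur identically on both rows, so they cancel in the commutativity check.

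Next, apply an arbitrary homological functor $H$. Since both rows are Puppe sequences, $H$ turns them into long exact sequences, yielding a commutative ladder of abelian groups with exact rows. Focusing on the five consecutive terms centered at $H(Z)$ gives
\beq
\xymatrix{H(X)\ar[r]\ar[d]^{H(f)} & H(Y)\ar[r]\ar[d]^{H(g)} & H(Z)\ar[r]\ar[d]^{H(h)} & H(\Sigma X)\ar[r]\ar[d]^{H(\Sigma f)} & H(\Sigma Y)\ar[d]^{H(\Sigma g)}\\
H(X')\ar[r] & H(Y')\ar[r] & H(Z')\ar[r] & H(\Sigma X')\ar[r] & H(\Sigma Y')}
\eeq
Here $f$ and $g$ are isomorphisms by hypothesis, and since $\Sigma$ is an automorphism, $\Sigma f$ and $\Sigma g$ are isomorphisms as well; hence all four outer vertical maps $H(f),H(g),H(\Sigma f),H(\Sigma g)$ are isomorphisms. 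The five lemma (valid in $\Ab$) then forces the middle map $H(h)$ to be an isomorphism. It is worth noting that only the right-hand extension of the Puppe sequence enters, so the not-yet-established left-hand extension is not required.

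Finally, since $H(h)$ is an isomorphism for every homological functor $H$, Corollary~\ref{yonedacor} shows that $h$ itself is an isomorphism. The only step demanding any care is the first one---verifying that the morphism of triangles genuinely propagates to a morphism of Puppe sequences---but this is routine given functoriality of $\Sigma$ and~\ref{T4}. The remainder is just the five lemma combined with the detection principle of Corollary~\ref{yonedacor}, so I do not anticipate a genuine obstacle.
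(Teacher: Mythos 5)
Your proposal is correct and follows essentially the same route as the paper's own proof: extend the morphism of triangles along the Puppe sequence to the right, apply an arbitrary homological functor $H$, invoke the five lemma to get that $H(h)$ is an isomorphism, and conclude via Corollary~\ref{yonedacor}. Your remark that only the rightward extension is needed is a worthwhile observation, since the leftward extension relies on Proposition~\ref{shift}, which the paper proves only later (using this very proposition).
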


\begin{proof}
Extend the diagram to the first five terms of the Puppe sequence
\beq
\xymatrix{{\bullet}\ar[r]\ar[d]^f & {\bullet}\ar[r]\ar[d]^g & {\bullet}\ar[r]\ar@{-->}[d]^h & {\bullet}\ar[d]^{\Sigma f}\ar[r] & {\bullet}\ar[d]^{\Sigma g}\\
{\bullet}\ar[r] & {\bullet}\ar[r] & {\bullet}\ar[r] & {\bullet}\ar[r] & {\bullet}}
\eeq
Applying any homological functor $H$ to this gives a chain map between five-term exact sequences. Then $H(h)$ is an isomorphism by the five lemma (e.g.~\cite[VIII.4.4]{Mac}). Thus $h$ is an isomorphism itself by Corollary~\ref{yonedacor}.
\end{proof}

\begin{cor}[Uniqueness of the mapping cone]
Each morphism $f:X\lra Y$ determines a unique isomorphism class of objects $Z$ which fit into a triangle
\beq
\xymatrix{X\ar[r]^f & Y\ar[r] & Z\ar[r] & {}\Sigma X}.
\eeq
\end{cor}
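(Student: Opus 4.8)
The plan is to separate the claim into existence and uniqueness, the former being immediate from the axioms and the latter following at once from the two propositions just established. For existence, axiom~\ref{T2} completes the given morphism $f:X\to Y$ to a triangle $(f,g,h)$, and this supplies at least one object $Z$ fitting into a triangle of the stated form.

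For uniqueness, suppose $(f,g,h)$ and $(f,g',h')$ are two triangles that both begin with the same $f$, with respective third objects $Z$ and $Z'$. First I would write down the morphism of triangles whose left-hand square is $f$ sitting over $f$, taking the two vertical maps to be $\id_X$ and $\id_Y$; this square commutes trivially, since $f\circ\id_X=\id_Y\circ f$. Proposition~\ref{fillingmorphism} then supplies a filling morphism $m:Z\to Z'$ that completes the identities on $X$ and $Y$ to a full morphism of triangles, whose third vertical component is $m$ and whose fourth is $\Sigma\,\id_X=\id_{\Sigma X}$.

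The final step is to invoke Proposition~\ref{mappingcone} for precisely this morphism of triangles: both of its first two vertical components, $\id_X$ and $\id_Y$, are isomorphisms, so the proposition forces any filling morphism $m$ to be an isomorphism as well. Hence $Z\cong Z'$, which is the desired uniqueness up to isomorphism.

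I expect essentially no obstacle here, since the entire content has been front-loaded into Propositions~\ref{fillingmorphism} and~\ref{mappingcone}. The only point deserving any care is recognizing that this uniqueness assertion is exactly the special case of the mapping cone proposition in which the two prescribed isomorphisms are both identity morphisms; once that observation is made, the filling morphism proposition and the mapping cone proposition together do all the work.
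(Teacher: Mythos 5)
Your proposal is correct and follows exactly the paper's own argument: existence from~\ref{T2}, then a filling morphism over $\id_X$ and $\id_Y$ via Proposition~\ref{fillingmorphism}, which Proposition~\ref{mappingcone} forces to be an isomorphism. Nothing to add.
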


\begin{proof}
The existence of such a $Z$ is~\ref{T2}. It remains to show that any two such objects $Z$ and $Z'$ are isomorphic. Suppose $Z$ and $Z'$ are given with their respective triangles,
\beq
\xymatrix{X\ar[r]^f\ar@{=}[d] & Y\ar[r]\ar@{=}[d] & Z\ar[r] & {}\Sigma X\\
X\ar[r]^f & Y\ar[r] & Z'\ar[r] & {}\Sigma X}
\eeq
then Proposition~\ref{fillingmorphism} guarantees the existence of a filling morphism $j$
\beq
\xymatrix{X\ar[r]^f\ar@{=}[d] & Y\ar[r]\ar@{=}[d] & Z\ar[r]\ar@{-->}[d]^j & {}\Sigma X\ar@{=}[d]\\
X\ar[r]^f & Y\ar[r] & Z'\ar[r] & {}\Sigma X}
\eeq
which then must be an isomorphism by the previous proposition.
\end{proof}

Any object in this isomorphism class is referred to as a \emph{mapping cone} of $f$ (or often as \emph{the} mapping cone of $f$). Such a mapping cone is written as $C_f$ and fits into a \emph{mapping cone triangle}
\beq
\xymatrix{X\ar[r]^f & Y\ar[r]^{i_f} & C_f\ar[r]^{p_f} & {}\Sigma X}
\eeq
with a \emph{mapping cone inclusion} $i_f$ (not necessarily a monomorphism) and a \emph{mapping cone projection} $p_f$ (not necessarily an epimorphism); this terminology is intuitive when thinking about mapping cones for continuous maps between topological spaces.

Of course \emph{any} triangle is a mapping cone triangle. This point of view however has the advantage that we may think of a triangle as (almost) uniquely determined by its first morphism $f$. We may summarize this by noting that we can turn the mapping cone construction into something resembling a functor\footnote{The category of morphisms $\Mor(\C)$ has as objects the morphisms of $\C$, and as morphisms commutative squares in $\C$.} $\Mor(\C)\lra\C$: to every object $f\in\Mor(\C)$, associate a mapping cone $C_f$. To every morphism in $\Mor(\C)$, associate\footnote{Note that for large categories, showing the existence of such an assignment requires the axiom of global choice.} a filling morphism by Proposition~\ref{fillingmorphism}. Due to the non-uniqueness of filling morphisms (Remark~\ref{nonuniquefm}), we cannot expect this construction to preserve composition, and in fact under some hypotheses on $\C$ that often hold in practice, one can show that a functorial choice is not possible~\cite{Stev}. But although assigning a mapping cone to each morphism is generally not functorial, it at least maps isomorphisms to isomorphisms by Proposition~\ref{mappingcone}.

\begin{cor}
\label{mapconeiso}
Given a diagram like
\beq
\xymatrix{{}\bullet\ar[r]^f\ar[d]^g_\sim & {}\bullet\ar[d]^h_\sim\\
{}\bullet\ar[r]_{f'} & {}\bullet}
\eeq
then any mapping cone $C_f$ is isomorphic to any mapping cone $C_{f'}$.
\end{cor}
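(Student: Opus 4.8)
The plan is to read this off from three earlier results: the existence of filling morphisms (Proposition~\ref{fillingmorphism}), the fact that a filling morphism between the cones of two isomorphic maps is itself an isomorphism (Proposition~\ref{mappingcone}), and the uniqueness of the mapping cone up to isomorphism (the preceding corollary). First I would invoke the uniqueness corollary to dispose of the word ``any'': since any two cones of $f$ are isomorphic, and likewise any two cones of $f'$, and since isomorphism is transitive, it suffices to produce a single isomorphism $C_f\cong C_{f'}$ for one fixed choice of cone for each of the two maps.

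With the cones fixed, I would write down their mapping cone triangles
\[
\xymatrix{X\ar[r]^f & Y\ar[r]^{i_f} & C_f\ar[r]^{p_f} & {}\Sigma X}
\qquad
\xymatrix{X'\ar[r]^{f'} & Y'\ar[r]^{i_{f'}} & C_{f'}\ar[r]^{p_{f'}} & {}\Sigma X'}
\]
obtained from~\ref{T2}. The commutative square in the hypothesis is precisely the left-hand square of a prospective morphism between these triangles, with first vertical arrow $g$ and second vertical arrow $h$; its commutativity is the single premise needed to apply Proposition~\ref{fillingmorphism}, which then yields a filling morphism $m:C_f\lra C_{f'}$ completing
\[
\xymatrix{X\ar[r]^f\ar[d]^g & Y\ar[r]^{i_f}\ar[d]^h & C_f\ar[r]^{p_f}\ar@{-->}[d]^m & {}\Sigma X\ar[d]^{\Sigma g}\\
X'\ar[r]^{f'} & Y'\ar[r]^{i_{f'}} & C_{f'}\ar[r]^{p_{f'}} & {}\Sigma X'}
\]
to a morphism of triangles. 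Since $g$ and $h$ are isomorphisms by hypothesis, Proposition~\ref{mappingcone} immediately gives that $m$ is an isomorphism, so $C_f\cong C_{f'}$.

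There is no real obstacle here: the corollary is essentially just the conjunction of the two preceding propositions, and the only thing to verify is that the hypothesis supplies the commutativity premise $hf=f'g$ of Proposition~\ref{fillingmorphism}, which is immediate from the given square. The one point worth keeping in mind is the initial appeal to the uniqueness corollary, which is what upgrades the single isomorphism $C_f\cong C_{f'}$ into the stated claim about \emph{arbitrary} choices of cones.
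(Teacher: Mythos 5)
Your proof is correct and follows essentially the same route as the paper's: complete $f$ and $f'$ to their mapping cone triangles, obtain a filling morphism from Proposition~\ref{fillingmorphism} using the commutativity $hf=f'g$, and conclude it is an isomorphism by Proposition~\ref{mappingcone}. The only difference is your initial reduction via the uniqueness corollary, which is harmless but not needed, since \emph{any} mapping cone of $f$ already sits in a triangle over $f$ (by definition and~\ref{T3}), so the filling-morphism argument applies directly to arbitrary choices of cones.
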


\begin{proof}
Completing $f$ and $f'$ to a triangle respectively containing $C_f$ and $C_{f'}$ allows a filling morphism $C_f\lra C_{f'}$, which then must be an isomorphism.
\end{proof}

Although this result might seem obvious in the general theory, it is actually far from trivial for relevant cases of $\C$, for example in stable homotopy theory (Example~\ref{spectra}).

Now what do those mapping cones actually mean? It may be familiar from algebraic topology that one can define homology not just for spaces, but also for continuous maps; this is a very general form of ``relative homology''. Then one also obtains a long exact sequence connecting the homology of a composition $gf$ with the homologies of $f$ and $g$ themselves---see for example~\cite{BottTu}. The interpretation is that the homology of a map is the homology of its mapping cone, where the mapping cone represents the map as an object in the triangulated category. Furthermore, the Puppe sequence of the triangle connecting the three mapping cones of a composition $gf$---obtained from~\ref{T5}---is the long exact sequence just mentioned! 

There is another consequence of the quasi-uniqueness of mapping cones which is a useful criterion for detecting when two triangulations on a given category coincide: it is sufficient that each triangle in one of them is also a triangle in the other. 

\begin{cor}
\label{triangscoincide}
If $\C$ is a preadditive category with suspension functor $\Sigma$ and triangulations $\mathcal{T}_1$ and $\mathcal{T}_2$ such that $\mathcal{T}_1\subseteq\mathcal{T}_2$, then $\mathcal{T}_1=\mathcal{T}_2$.
\end{cor}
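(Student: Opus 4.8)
The plan is to prove the reverse inclusion $\mathcal{T}_2\subseteq\mathcal{T}_1$, which combined with the hypothesis yields $\mathcal{T}_1=\mathcal{T}_2$. The essential tool will be the quasi-uniqueness of mapping cones established just above: within any single triangulation, a morphism $f$ determines its third object up to isomorphism, and any filling morphism between two triangles sharing the first two objects (with identities on them) is automatically an isomorphism by Proposition~\ref{mappingcone}.

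First I would fix an arbitrary triangle $(f,g,h)$ of $\mathcal{T}_2$, with $f:X\ra Y$. Applying axiom~\ref{T2} \emph{inside} the triangulation $\mathcal{T}_1$, I can complete this same morphism $f$ to a triangle $(f,g',h')$ belonging to $\mathcal{T}_1$. By the assumed inclusion $\mathcal{T}_1\subseteq\mathcal{T}_2$, this second triple is simultaneously a triangle of $\mathcal{T}_2$. Thus $(f,g,h)$ and $(f,g',h')$ are now both triangles of $\mathcal{T}_2$ with identical first morphism.

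Next I would exploit this coincidence via the mapping-cone machinery, run entirely within $\mathcal{T}_2$. Proposition~\ref{fillingmorphism}, applied in $\mathcal{T}_2$ with the vertical maps $\id_X$ and $\id_Y$, supplies a filling morphism $m:Z\ra Z'$ fitting into a morphism of candidate triangles $(\id_X,\id_Y,m)$; by Proposition~\ref{mappingcone} this $m$ is an isomorphism. The crucial observation is that ``$m$ is an isomorphism'' is a statement purely about the category $\C$, carrying no reference to which triangulation certified it. Hence $(f,g,h)$ and $(f,g',h')$ are isomorphic as candidate triangles, full stop.

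Finally I would transport this isomorphism back into $\mathcal{T}_1$. Since $(f,g',h')$ is a triangle of $\mathcal{T}_1$ and is isomorphic to the candidate triangle $(f,g,h)$, axiom~\ref{T3} applied within $\mathcal{T}_1$ forces $(f,g,h)$ to be a triangle of $\mathcal{T}_1$ as well. As $(f,g,h)$ was an arbitrary $\mathcal{T}_2$-triangle, this establishes $\mathcal{T}_2\subseteq\mathcal{T}_1$ and completes the argument. The only delicate point---and the conceptual crux---is the observation in the third step that the mapping-cone machinery may be executed entirely inside $\mathcal{T}_2$ to produce a \emph{triangulation-agnostic} isomorphism of candidate triangles, which then serves precisely as the bridge that axiom~\ref{T3} requires to certify membership in $\mathcal{T}_1$.
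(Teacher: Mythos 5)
Your proposal is correct and follows essentially the same route as the paper: complete $f$ to a $\mathcal{T}_1$-triangle via~\ref{T2}, use the inclusion to view both triangles inside $\mathcal{T}_2$ where Propositions~\ref{fillingmorphism} and~\ref{mappingcone} produce an isomorphism of candidate triangles, and then invoke~\ref{T3} for $\mathcal{T}_1$. Your explicit remark that the resulting isomorphism is a statement about $\C$ alone, independent of which triangulation certified it, is exactly the (implicit) crux of the paper's argument, just spelled out more carefully.
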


\begin{proof}
We have to show that any triangle $(f,g,h)$ which lies in $\mathcal{T}_2$ also automatically lies in $\mathcal{T}_1$. By the axiom~\ref{T2} for $\mathcal{T}_2$, there is a triangle $(f,g',h')\in\mathcal{T}_1$. But then by Proposition~\ref{mappingcone} applied with respect to $\mathcal{T}_2$, the triples $(f,g,h)$ and $(f,g',h')$ are isomorphic. So $(f,g,h)\in\mathcal{T}_1$ is implied by~\ref{T3}.
\end{proof}

Finally, here is one more crucial consequence of Proposition~\ref{mappingcone}:

\begin{cor}
\label{iso}
A morphism $f$ is an isomorphism if and only if there is a triangle
\beq
\xymatrix{{\bullet}\ar[r]^f & {\bullet}\ar[r] & 0\ar[r] & {\bullet}}
\eeq
\end{cor}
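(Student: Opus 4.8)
The plan is to prove the two implications separately; the forward direction is an immediate application of~\ref{T3}, while the converse is where the real work lies and calls for the homological machinery. Suppose first that $f:X\ra Y$ is an isomorphism. By~\ref{T1} the sequence $\xymatrix{X\ar@{=}[r]&X\ar[r]&0\ar[r]&\Sigma X}$ is a triangle, and I would compare it with the candidate triangle $\xymatrix{X\ar[r]^f&Y\ar[r]&0\ar[r]&\Sigma X}$ by means of the morphism of candidate triangles whose verticals are $\id_X$, $f$, $\id_0$ and $\Sigma\id_X=\id_{\Sigma X}$. Each square commutes (the leftmost because $f\circ\id_X=f$), and all four verticals are isomorphisms precisely because $f$ is, so~\ref{T3} upgrades the second candidate triangle to a genuine triangle.

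For the converse, suppose we are handed a triangle $\xymatrix{X\ar[r]^f&Y\ar[r]^g&0\ar[r]^h&\Sigma X}$ and must show $f$ is an isomorphism. The tool is the family of homological functors $\C(W,\cdot)$ supplied by Proposition~\ref{homhom}. Applying $\C(W,\cdot)$ to this triangle produces the associated long exact (Puppe) sequence, which---using $\C(W,0)=0$---reads $\cdots\ra\C(W,X)\ra\C(W,Y)\ra 0\ra\C(W,\Sigma X)\ra\C(W,\Sigma Y)\ra\cdots$ for every object $W$. Exactness at $\C(W,Y)$ says that post-composition with $f$ is surjective, and exactness at $\C(W,\Sigma X)$ says that post-composition with $\Sigma f$ is injective, again for every $W$.

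The step I expect to be the main obstacle is bridging the mismatch that~\ref{T4} rotates only forwards, so the injectivity I extract concerns $\Sigma f$ rather than $f$ itself. I would close this gap using that $\Sigma$ is an automorphism and in particular faithful: if $s:W\ra X$ satisfies $fs=0$, then $\Sigma f\circ\Sigma s=\Sigma(fs)=0$, whereupon injectivity of post-composition with $\Sigma f$ at the object $\Sigma W$ forces $\Sigma s=0$ and hence $s=0$. Thus $\C(W,f)$ is a bijection for every $W$, i.e.\ the natural transformation $\C(\cdot,f)$ is an isomorphism, and the Yoneda lemma (Lemma~\ref{yoneda}) yields that $f$ is an isomorphism. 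I would also flag why the apparently slicker route through Proposition~\ref{mappingcone} does not work: making $f$ into the filling morphism between two triangles forces $f$ itself to appear among the two verticals one must assume invertible, so that argument is circular and the passage through homological functors is genuinely needed.
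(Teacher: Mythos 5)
Your proof is correct and is essentially the paper's own argument: the forward direction is an isomorphism of candidate triangles combined with \ref{T3}, and the converse applies homological functors to the Puppe sequence and concludes via Yoneda. The paper packages the converse slightly differently---it reads off that $H(\Sigma f)$ is an isomorphism for every homological functor $H$ (both injectivity and surjectivity are visible at the $\Sigma f$ spot, since in the long exact sequence the map $H(\Sigma f)$ is flanked by $H(0)=0$ and $H(\Sigma 0)=0$), concludes that $\Sigma f$ is an isomorphism by Corollary~\ref{yonedacor}, and then desuspends; your bookkeeping at the level of $f$ itself, with the faithfulness transfer supplying injectivity, is an equivalent variant of the same argument.

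One correction to your closing remark: the route through Proposition~\ref{mappingcone} is not circular; it only requires rotating first. By \ref{T4}, the given triangle yields the triangle $(g,h,-\Sigma f)$ on the objects $Y$, $0$, $\Sigma X$, $\Sigma Y$, while rotating the \ref{T1} triangle for $\id_Y$ twice yields the triangle $(0,0,-\id_{\Sigma Y})$ on the objects $Y$, $0$, $\Sigma Y$, $\Sigma Y$. Taking the vertical morphisms $\id_Y$ and $\id_0$ (whose square trivially commutes), Proposition~\ref{fillingmorphism} supplies a filling morphism $m:\Sigma X\lra\Sigma Y$, which is an isomorphism by Proposition~\ref{mappingcone} since both given verticals are identities; and commutativity of the rightmost square forces $m=\Sigma f$, exactly as the analogous step in the proof of Proposition~\ref{vanishcomp}. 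Hence $\Sigma f$, and therefore $f$, is an isomorphism. The point is that rotation moves $f$ out of the vertical slots that must be assumed invertible and into the filling-morphism slot, where Proposition~\ref{mappingcone} does the work---though, since the paper's proof of that proposition itself runs through homological functors and Corollary~\ref{yonedacor}, this route does not actually avoid the homological machinery, it merely repackages it.
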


\begin{proof}
If $f$ is an isomorphism, then it is isomorphic in $\Mor(\C)$ to an identity morphism, and therefore fits into such a triangle by~\ref{T1}, Proposition~\ref{mappingcone} and~\ref{T3}. Conversely, if the above is a triangle, then the Puppe sequence with respect to any homological functor $H$ shows that $H(\Sigma f)$ is necessarily an isomorphism, and then so is $f$ by Corollary~\ref{yonedacor} and the invertibility of $\Sigma$.
\end{proof}

We will make use of this result on various occasions.

\begin{prop}
\label{shift}
$(f,g,h)$ is a triangle if and only if the shifted triple $(g,h,-\Sigma f)$ is.
\end{prop}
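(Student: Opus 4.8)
The forward implication is exactly axiom~\ref{T4}, so the entire content lies in the converse: assuming $(g,h,-\Sigma f)$ is a triangle, I must show that $(f,g,h)$ is one. My plan is to manufacture a genuine triangle on the morphism $f$ by an independent route, and then exhibit an isomorphism of candidate triangles relating it to $(f,g,h)$, so that~\ref{T3} finishes the job.

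First I would use~\ref{T2} to complete $f\colon X\to Y$ to an honest triangle $(f,g',h')$, say $X\xrightarrow{f}Y\xrightarrow{g'}Z'\xrightarrow{h'}\Sigma X$. The obstacle is that $(f,g,h)$ and $(f,g',h')$ share their \emph{first} morphism, whereas the hypothesis only concerns a \emph{rotation} of $(f,g,h)$. Since~\ref{T4} rotates triangles in one direction only, there is no immediate ``inverse rotation''; the trick is instead to rotate \emph{both} triples forward until their first morphisms agree. Applying~\ref{T4} twice to the hypothesis $(g,h,-\Sigma f)$ produces the triangle $(-\Sigma f,-\Sigma g,-\Sigma h)$, while applying~\ref{T4} three times to $(f,g',h')$ produces the triangle $(-\Sigma f,-\Sigma g',-\Sigma h')$. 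These two triangles now share the first morphism $-\Sigma f$ together with the outer objects $\Sigma X$, $\Sigma Y$, $\Sigma^2 X$, differing only in the middle object $\Sigma Z$ versus $\Sigma Z'$.

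At this point mapping-cone uniqueness applies. By Proposition~\ref{fillingmorphism} there is a filling morphism $\psi\colon\Sigma Z\to\Sigma Z'$ completing these to a morphism of triangles with identities on the three outer objects, and by Proposition~\ref{mappingcone} this $\psi$ is an isomorphism; the two commuting squares read $\psi\circ(-\Sigma g)=-\Sigma g'$ and $(-\Sigma h')\circ\psi=-\Sigma h$. Because $\Sigma$ is an automorphism, I now descend: set $\phi:=\Sigma^{-1}\psi\colon Z\to Z'$, again an isomorphism, and apply the faithful functor $\Sigma^{-1}$ to the two relations to obtain $\phi g=g'$ and $h'\phi=h$. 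Together with the identities on $X$, $Y$ and $\Sigma X$, these say precisely that
\[
\xymatrix{X\ar[r]^f\ar@{=}[d] & Y\ar[r]^g\ar@{=}[d] & Z\ar[r]^h\ar[d]^{\phi}_{\sim} & {}\Sigma X\ar@{=}[d]\\
X\ar[r]^f & Y\ar[r]^{g'} & Z'\ar[r]^{h'} & {}\Sigma X}
\]
is an isomorphism of candidate triangles. Its bottom row $(f,g',h')$ is a triangle, so~\ref{T3} forces the top row $(f,g,h)$ to be a triangle as well, which completes the converse.

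The main obstacle is conceptual rather than computational: the one-directional nature of~\ref{T4} means one cannot simply ``rotate backwards,'' and the decisive idea is to rotate both triples far enough forward that they share a common first morphism, so that the quasi-uniqueness of the mapping cone can be invoked. Once the isomorphism $\psi$ is obtained at the suspended level, the descent through $\Sigma^{-1}$ is routine, relying only on $\Sigma$ being a faithful automorphism.
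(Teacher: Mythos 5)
Your proof is correct and is essentially the same argument as the paper's: complete $f$ to a triangle $(f,g',h')$ via~\ref{T2}, rotate both triples forward until they share the first morphism $-\Sigma f$, obtain an isomorphism between $(-\Sigma f,-\Sigma g,-\Sigma h)$ and $(-\Sigma f,-\Sigma g',-\Sigma h')$ from Propositions~\ref{fillingmorphism} and~\ref{mappingcone}, and then apply $\Sigma^{-1}$ and~\ref{T3}. The paper states these steps very tersely; your write-up simply makes the rotation counts and the descent through $\Sigma^{-1}$ explicit.
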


\begin{proof}
The ``only if'' part is~\ref{T4}. For the ``if'' part, complete $f:X\lra Y$ to a triangle $(f,g',h')$. Then both $(-\Sigma f,-\Sigma g,-\Sigma h)$ and $(-\Sigma f,-\Sigma g',-\Sigma h')$ are triangles, which are isomorphic by Proposition~\ref{fillingmorphism} and Proposition~\ref{mappingcone}. Applying $\Sigma^{-1}$ proves the assertion by~\ref{T3}.
\end{proof}

Thus if $(f,g,h)$ is a triangle, then so is $(-\Sigma^{-1}h,f,g)$. This proposition allows us to define left derived functors by continuing the Puppe sequence to the left. Similarly, the braid diagram also can be extended to the left.

\begin{cor}
\begin{compactenum}
\item The Puppe sequence
\beq
\xymatrix{{\ldots}\ar[r]^{\Sigma^{-1}g} & {\Sigma^{-1}Z}\ar[r]^{\Sigma^{-1}h} & X\ar[r]^f & Y\ar[r]^g & Z\ar[r]^h & {\Sigma X}\ar[r]^{\Sigma f} & {\Sigma Y}\ar[r]^{\Sigma g} & {\ldots}}
\eeq
consists of triangles and negatives of triangles which alternate. It maps to a long exact sequence under any homological functor.
\item For any $h=gf$, the extended braid diagram
\beq
\xymatrix@-4pt{{\cdots} & {\bullet}\ar@(ru,lu)[rr]^{\Sigma^{-1}f''}\ar[rd]^{\Sigma^{-1}k} && {\bullet}\ar@(ru,lu)[rr]^h\ar[rd]^f & & {\bullet}\ar@(ru,lu)[rr]^{g'}\ar[rd]^{h'} & & {\bullet}\ar@(ru,lu)[rr]^{k''}\ar[rd]^{g''} & & {\bullet} & \cdots\\
\cdots && {\bullet}\ar[rd]^{\Sigma^{-1}k'}\ar[ru]_{\Sigma^{-1}h''} && {\bullet}\ar[ur]_g\ar[rd]^{f'} & & {\bullet}\ar[ru]_{k'}\ar[rd]^{h''} & & {\bullet}\ar[ru]_{\Sigma f'}\ar[rd]^{\Sigma g} & & {\cdots}\\
{\cdots} & {\bullet}\ar[ru]_{\Sigma^{-1}h'}\ar@(rd,ld)[rr]_{\Sigma^{-1}g'} && {\bullet}\ar@(rd,ld)[rr]_{\Sigma^{-1}k''}\ar[ru]_{\Sigma^{-1}g''} && {\bullet}\ar@(rd,ld)[rr]_{f''}\ar[ru]_{k} & & {\bullet}\ar[ru]_{\Sigma f}\ar@(rd,ld)[rr]_{\Sigma h} & & {\bullet} & {\cdots}}
\eeq
has Puppe sequences going along the strands and thus maps to a braid diagram of long exact sequences under a homological functor.
\end{compactenum}
\end{cor}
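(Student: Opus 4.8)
The plan is to reduce both parts to Proposition~\ref{shift} together with the rightward Puppe sequence and braid diagram that were already constructed in the discussion preceding Proposition~\ref{fillingmorphism}; no genuinely new idea is needed beyond the leftward extension that Proposition~\ref{shift} now makes available.

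For part~(a), only the leftward half of the sequence requires argument. Proposition~\ref{shift} supplies the equivalence that $(f,g,h)$ is a triangle if and only if $(-\Sigma^{-1}h,f,g)$ is, so I would iterate this to step one morphism to the left at a time, each step attaching the desuspended morphism with the sign forced by Proposition~\ref{shift}. Tracking signs via Proposition~\ref{signs}, each new consecutive triple is, up to the permitted two-sign ambiguity, either a triangle or the negative of one, and the alternation pattern produced leftward dovetails with the rightward pattern: around $(f,g,h)$ one finds the neighbours $(\Sigma^{-1}h,f,g)$ and $(g,h,\Sigma f)$ are both negatives of triangles, while the next neighbours out are again triangles, giving a consistent alternation across the seam. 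This yields the bi-infinite Puppe sequence alternating between triangles and negatives of triangles throughout.

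For the long-exactness claim I would apply half-exactness of $H$ node by node. Every node of the sequence is the middle term of a consecutive triple which is a triangle or the negative of one, and half-exactness gives exactness at the middle of a triangle; for a negative of a triangle the same conclusion holds, since negating a morphism changes neither the kernel nor the image of its image under $H$ (as $H(-\phi)=-H(\phi)$, and $\ker(-H(\phi))=\ker H(\phi)$, $\im(-H(\phi))=\im H(\phi)$). Splicing these overlapping exact three-term segments along the whole sequence produces a long exact sequence. For part~(b), the rightward half of the braid already commutes by~\ref{T5} and functoriality of $\Sigma$; to extend it leftward I would apply the leftward extension of part~(a) to each of the three strands, noting that each newly-appearing commuting cell in the left half is the image under $\Sigma^{-1}$ of a cell in the original composition-axiom diagram, so it commutes by functoriality of $\Sigma^{-1}$. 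A homological functor $H$ then sends each strand to a long exact sequence by part~(a) and, being a functor, preserves all the commuting cells, so the whole braid maps to a braid diagram of long exact sequences.

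The main obstacle I anticipate is not conceptual but is entirely the sign and diagram bookkeeping: confirming that the alternation of triangles and negatives stays consistent where the leftward and rightward halves meet, and carefully identifying exactly which cells of the extended braid are new so that each can be traced back through $\Sigma^{-1}$ to a cell governed by~\ref{T5}. Once this bookkeeping is made precise, everything reduces to Proposition~\ref{shift}, Proposition~\ref{signs}, and functoriality, so there is no hard step remaining.
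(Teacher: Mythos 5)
Your proposal is correct and takes essentially the same route the paper intends: the paper offers no separate proof of this corollary, treating it as immediate from Proposition~\ref{shift} (iterated leftward), the sign bookkeeping of Proposition~\ref{signs}, and functoriality of $\Sigma^{\pm 1}$ for the commutativity of the braid's cells --- exactly the details you spell out.
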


\begin{rem}
\label{shiftcone}
By three applications of Proposition~\ref{shift} in combination with Proposition~\ref{signs}, we can calculate the mapping cone of the suspension of a morphism as $C_{\Sigma f}\cong\Sigma C_f$.
\end{rem}

\begin{rem}
\label{dual}
It is now clear that the dual of a triangulated category is triangulated in a canonical way, as announced in Remark~\ref{defrems}\ref{dualrem}.
\end{rem}

One further interpretation of mapping cones is the following:

\begin{rem}
\label{weakkernel}
Consider any triangle
\beq
\xymatrix{\bullet \ar[r]^f & {\bullet}\ar[r]^{i_f} & C_f\ar[r]^{p_f} & \bullet }
\eeq
where we have named the morphisms and the third object in a suggestive manner. Then 
\beq
\xymatrix{{\Sigma^{-1}C_f}\ar[r]^(.6){\Sigma^{-1}p_f} & {\bullet}\ar[r]^f & {\bullet}\ar[r]^{i_f} & C_f}
\eeq
is the negative of a triangle, and:
\begin{compactenum}
\item The desuspended mapping cone projection $\Sigma^{-1}p_f$ is a weak kernel of $f$. In other words, $f\circ (\Sigma^{-1}p_f)=0$ and given any morphism $g$ such that $fg=0$, there exists a (non-unique) lift along $\Sigma^{-1}p_f$:
\beq
\xymatrix{{\bullet}\ar@{-->}[d]\ar[rd]^g\\
\Sigma^{-1}C_f\ar[r]_(.6){\Sigma^{-1}p_f} & {\bullet}\ar[r]_f & {\bullet}\ar[r]_{i_f} & C_f}
\eeq
\item The mapping cone inclusion $i_f$ is a weak cokernel of $f$. In other words, $i_ff=0$ and given any morphism $h$ such that $hf=0$, there exists a (non-unique) extension along $i_f$:
\beq
\xymatrix{{\Sigma^{-1}C_f}\ar[r]^(.6){\Sigma^{-1}p_f} & {\bullet}\ar[r]^f & {\bullet}\ar[r]^{i_f}\ar[rd]_h & C_f\ar@{-->}[d]\\
&&&{\bullet}}
\eeq
\end{compactenum}

\begin{proof}
By duality, it is sufficient to prove the second statement. But this follows immediately from the simple filling morphism diagram
\beq
\xymatrix{{\bullet}\ar[r]^f\ar[d]^0 & {\bullet}\ar[r]^{i_f}\ar[d]^h & C_f\ar[r]^{p_f}\ar@{-->}[d] & {\bullet}\ar[d]^0\\
0\ar[r] & {\bullet}\ar@{=}[r] & {\bullet}\ar[r] & 0}
\eeq
\end{proof}

These weak (co)kernels are actually special cases of the more general notion of \emph{homotopy (co)limit}, which can not be discussed here in full generality; see~\cite{Dugger,Riehl} for recent expositions. We will come across another homotopy colimit (namely a homotopy pushout) in the proof of Proposition~\ref{trisubcatlocal}.
\end{rem}

We now start to study biproducts in triangulated categories, starting with a lemma.

\begin{lem}
\label{triangulatedbiproduct}
A not necessarily commutative diagram
\beq
\xymatrix@+5pt{X_1\ar@<1ex>[r]^{i_1} & Y\ar@<1ex>[l]^{p_1}\ar@<-1ex>[r]_{p_2} & X_2\ar@<-1ex>[l]_{i_2}}
\eeq
with a triangle
\beq
\xymatrix{X_1\ar[r]^{i_1} & Y\ar[r]^{p_2} & X_2\ar[r]^0 & {\Sigma X_1}}
\eeq
is a biproduct diagram if and only if the equations
\beq
p_1i_1=\id_{X_1},\qquad p_2i_2=\id_{X_2},\qquad p_1i_2=0,\qquad p_2i_1=0
\eeq
hold.
\end{lem}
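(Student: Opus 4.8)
The claim is a biconditional: the diagram is a biproduct iff the four equations hold. The plan is to prove each direction, leveraging the fact that $(i_1, p_2, 0)$ is a triangle together with the characterization of biproducts in a preadditive category. Recall that in a preadditive category, a diagram $X_1 \xrightarrow{i_1} Y \xleftarrow{i_2} X_2$ with retractions $p_1, p_2$ is a biproduct precisely when $p_1 i_1 = \id_{X_1}$, $p_2 i_2 = \id_{X_2}$, $p_1 i_2 = 0$, $p_2 i_1 = 0$, and the completeness relation $i_1 p_1 + i_2 p_2 = \id_Y$ holds (see \cite[Sec.~VIII.2]{Mac}). So the four stated equations are the standard biproduct equations minus the completeness relation; the real content of the lemma is that, in the presence of the triangle, the four equations \emph{already imply} the completeness relation.

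\textbf{The easy direction.} For the ``only if'' direction, suppose the diagram is a biproduct. Then all five biproduct equations hold by definition, so in particular the four listed equations hold. This is immediate and requires nothing about the triangulated structure.

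\textbf{The hard direction.} For the ``if'' direction, assume the four equations hold; I must produce the completeness relation $i_1 p_1 + i_2 p_2 = \id_Y$, since this is what upgrades the data to a genuine biproduct. Here is where the triangle enters. The plan is to apply a homological functor and exploit exactness, or better, to work directly with morphisms into $Y$ using that $\C(W, \cdot)$ is homological (Proposition~\ref{homhom}). Consider the endomorphism $e := \id_Y - i_1 p_1 - i_2 p_2$ of $Y$. Using the four equations, one checks $p_1 e = 0$ and $p_2 e = 0$. Now I want to conclude $e = 0$. Since $(i_1, p_2, 0)$ is a triangle, Proposition~\ref{vanishcomp} gives $p_2 i_1 = 0$ (consistent with our hypothesis), and by Proposition~\ref{homhom} the sequence $\C(W, X_1) \xrightarrow{i_1 \circ} \C(W, Y) \xrightarrow{p_2 \circ} \C(W, X_2)$ is exact for every $W$. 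Taking $W = Y$ and applying this to $e \in \C(Y,Y)$: since $p_2 e = 0$, exactness yields $e = i_1 u$ for some $u : Y \to X_1$. Then $u = p_1 i_1 u = p_1 e = 0$, forcing $e = 0$, which is exactly the completeness relation.

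\textbf{Anticipated obstacle.} The main subtlety is verifying that the triangle $(i_1, p_2, 0)$ genuinely supplies the exactness needed, and choosing the right test object $W$ and the right morphism to feed into the homological functor. One must be careful that the weak-kernel/exactness argument pins down $e$ uniquely rather than merely up to the non-unique lifts characteristic of triangulated categories; the retraction $p_1$ is precisely what removes the ambiguity, since composing the lift $u$ with $p_1$ forces $u = 0$. A cleaner alternative, which I would also keep in mind, is to invoke the weak cokernel property of $p_2$ (Remark~\ref{weakkernel}) dually, but the direct computation via Proposition~\ref{homhom} is the most transparent route and avoids any appeal to desuspension.
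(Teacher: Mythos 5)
Your proof is correct, and it takes a genuinely different route from the paper's. The paper sets $e := i_1p_1 + i_2p_2$, checks that $e$ is idempotent with $i_1 = ei_1$ and $p_2 = p_2e$, and then invokes the composition axiom~\ref{T5} on the equation $i_1 = ei_1$: from the resulting diagram it extracts a morphism $k$ with $kp_2 = p_2$, cancels the right-invertible $p_2$ to get $k = \id_{X_2}$, concludes $C_e \cong 0$, hence $e$ is an isomorphism by Corollary~\ref{iso}, and finally cancels $e$ from $e^2 = e$ to obtain $e = \id_Y$. You instead set $e := \id_Y - i_1p_1 - i_2p_2$, verify $p_1e = 0$ and $p_2e = 0$ from the four equations, and use exactness of $\C(Y,X_1) \ra \C(Y,Y) \ra \C(Y,X_2)$ (Proposition~\ref{homhom} applied to the given triangle) to write $e = i_1u$; the retraction $p_1$ then forces $u = p_1i_1u = p_1e = 0$, so $e = 0$. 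Your argument is shorter and more elementary: it needs only the representability statement Proposition~\ref{homhom} (available before the lemma, and proved without any appeal to biproducts, so there is no circularity), whereas the paper's proof exercises the heavier machinery of~\ref{T5}, mapping cones, and the vanishing criterion of Corollary~\ref{iso}. What your route makes transparent is that the lemma is really the weak-kernel property of $i_1$ (cf.\ Remark~\ref{weakkernel}) combined with the observation that the retraction $p_1$ removes the usual non-uniqueness of lifts; what the paper's route buys is a self-contained illustration of how the composition axiom and the idempotent-cancellation trick can be deployed, which foreshadows similar manipulations later in the text.
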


\begin{proof}
The above equations follow easily from the standard biproduct equations
\beq
p_1i_1=\id_{X_1},\qquad p_2i_2=\id_{X_2},\qquad i_1p_1+i_2p_2=\id_Y,
\eeq
so that the ``only if'' is straightforward. Hence the remaining task is to show that the third biproduct equation
\beq
i_1p_1+i_2p_2=\id_Y
\eeq
follows from the equations above together with the specified triangle. A direct calculation using the above equations shows that $e:= i_1p_1+i_2p_2$ is an idempotent satisfying $i_1=ei_1$ and $p_2=p_2e$. Now apply the composition axiom to the valid equation $i_1=ei_1$:
\beq
\xymatrix@!{X_1\ar@(ru,lu)[rr]^{i_1}\ar[rd]^{i_1} & & Y\ar@(ru,lu)[rr]^{}\ar[rd]^{p_2} & & C_e\ar@(ru,lu)[rr]^{}\ar[rd]^{} & & {\Sigma X_2}\\
& Y\ar[ur]^e\ar[rd]^{p_2} & & X_2\ar[ru]^{}\ar[rd]^{} & & {\Sigma Y}\ar[ru]_{} &\\
& & X_2\ar@(rd,ld)[rr]^{}\ar[ru]^k & & {\Sigma X_1}\ar[ru]_{} & &}
\eeq
It follows that $kp_2=p_2e=p_2$. But now $p_2$ has a right inverse, so we can cancel it from the right and get $k=\id_{X_2}$, which shows that $C_e\cong 0$. By Corollary~\ref{iso}, $e$ is therefore an isomorphism. But now, the only idempotent isomorphism is the identity: we can cancel $e$ from $e^2=e$ and end up with $e=\id_Y$.
\end{proof}

The following result can be thought of as sharpening Corollary~\ref{iso}:

\begin{prop}
\label{split}
\begin{compactenum}
\item[(a)]
If $(f,f',f'')$ is a triangle and $f$ has a left inverse, then $f''=0$. If $f$ has a right inverse, then $f'=0$.
\item[(b)]
\label{splitbiproduct}
Any $f:X\lra Y$ with a left inverse is the injection $i_1$ of a biproduct $Y\cong X\oplus Z$, where actually $Z=C_f$.
\end{compactenum}
\end{prop}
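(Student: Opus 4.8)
The plan is to derive (a) from the Puppe long exact sequences, and then to feed (a) into Lemma~\ref{triangulatedbiproduct} to obtain the biproduct in (b).

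For part (a), fix any $W\in\C$ and apply the homological functor $\C(W,\cdot)$ (Proposition~\ref{homhom}) to the Puppe sequence of $(f,f',f'')$, obtaining a long exact sequence of abelian groups. If $f$ has a left inverse $r$, then $\Sigma f$ has the left inverse $\Sigma r$, so precomposition $(\Sigma f)\circ\colon\C(W,\Sigma X)\to\C(W,\Sigma Y)$ is injective. Exactness at $\C(W,\Sigma X)$ then forces the incoming map $f''\circ$ to be zero for every $W$; taking $W=Z$ and evaluating at $\id_Z$ yields $f''=0$. The second assertion is entirely dual (and can also be read off directly): when $f$ has a right inverse, $f\circ\colon\C(W,X)\to\C(W,Y)$ is surjective, so exactness at $\C(W,Y)$ forces $f'\circ=0$, and evaluating at $\id_Y$ gives $f'=0$.

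For part (b), let $r$ be a left inverse of $f\colon X\to Y$ and complete $f$ by~\ref{T2} to a mapping cone triangle $(f,i_f,p_f)$. Part (a) immediately gives $p_f=0$. Applying Remark~\ref{weakkernel} to the shifted triangle $(i_f,0,-\Sigma f)$ (a triangle by~\ref{T4}) exhibits $p_f=0$ as a weak cokernel of $i_f$, so any morphism $h$ with $hi_f=0$ must vanish; that is, $i_f$ is an epimorphism. I will realize $Y$ as the biproduct $X\oplus C_f$ by verifying the hypotheses of Lemma~\ref{triangulatedbiproduct} with $i_1:=f$, $p_1:=r$, $p_2:=i_f$, the required triangle being exactly $(f,i_f,0)$; it remains only to produce the splitting $i_2\colon C_f\to Y$.

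The crux, and the step I expect to be the main obstacle, is the construction of $i_2$ together with the verification that $i_fi_2=\id_{C_f}$. The idea is to use the idempotent $e:=\id_Y-fr$, which satisfies $ef=0$; since $i_f$ is a weak cokernel of $f$ (Remark~\ref{weakkernel}), $e$ factors as $e=i_2i_f$ for some $i_2\colon C_f\to Y$. Then, using $i_ff=0$ (Proposition~\ref{vanishcomp}), one computes $i_fi_2i_f=i_fe=i_f$, so $(i_fi_2-\id_{C_f})i_f=0$; since $i_f$ is epi this gives $i_fi_2=\id_{C_f}$. The same epimorphism trick applied to $ri_2i_f=re=r-rfr=0$ yields $ri_2=0$. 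Combined with $p_1i_1=rf=\id_X$ and $p_2i_1=i_ff=0$, all four equations of Lemma~\ref{triangulatedbiproduct} hold, so $Y\cong X\oplus C_f$ with $f$ as the injection $i_1$, which is the claim.
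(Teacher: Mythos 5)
Your proof is correct, and every tool you invoke (Puppe sequences, Proposition~\ref{homhom}, Remark~\ref{weakkernel}, Lemma~\ref{triangulatedbiproduct}, Proposition~\ref{vanishcomp}) is established before this proposition, so there is no circularity; but your route is genuinely different from the paper's. The paper handles both parts with a single application of the composition axiom~\ref{T5} to the factorization $\id_X=gf$, where $g$ is the left inverse: since the mapping cone of $\id_X$ is $0$ by~\ref{T1}, commutativity forces $f''$ to factor through $0$, which gives (a); and the same diagram supplies an isomorphism $k''\colon C_g\stackrel{\sim}{\lra}\Sigma C_f$, from which the paper defines the missing injection explicitly as $i_2:=\Sigma^{-1}(g''k''^{-1})$ and checks the equations of Lemma~\ref{triangulatedbiproduct} by commutativity and Proposition~\ref{vanishcomp}. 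You instead obtain (a) from the long exact sequence of the Puppe sequence under $\C(W,\cdot)$: a split monomorphism stays split under $\Sigma$ and under any hom-functor, so the map preceding the injective $\C(W,\Sigma f)$ vanishes (one terminological slip: the map $u\mapsto(\Sigma f)u$ is \emph{post}composition with $\Sigma f$, not precomposition, though the displayed map is the right one). For (b) you split the idempotent $e=\id_Y-fr$ through the weak cokernel $i_f$, and then use the observation that $p_f=0$ makes $i_f$ an epimorphism (Remark~\ref{weakkernel} applied to the rotated triangle $(i_f,0,-\Sigma f)$) to cancel $i_f$ and deduce $i_fi_2=\id_{C_f}$ and $ri_2=0$. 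What each approach buys: the paper's single octahedron dispatches both parts at once and produces an explicit formula for the splitting $i_2$; your argument stays entirely within the homological toolkit already developed (\ref{T5} enters only through the cited results, never directly), and the intermediate fact you isolate---a triangle whose third morphism vanishes has an epimorphic second morphism---is a clean reusable observation that makes the idempotent-splitting step look exactly like the classical proof that a split short exact sequence yields a biproduct.
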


\begin{proof}
\begin{compactenum}
\item[(a)]
We start with the first case. So assume that $f$ has a left inverse $g$, which can be completed to a triangle $(g,g',g'')$. Then an application of~\ref{T5} yields a diagram
\beq
\xymatrix@!{X\ar@(ru,lu)@{=}[rr]\ar[rd]^f & & X\ar@(ru,lu)[rr]^{g'}\ar[rd] & & C_g\ar@(ru,lu)[rr]^{k''}\ar[rd]^{g''} & & {\Sigma C_f}\\
& Y	\ar[ur]^g\ar[rd]^{f'} & & 0\ar[ru]\ar[rd] & & \Sigma Y\ar[ru]^{\Sigma f'} &\\
& & C_f\ar@(rd,ld)[rr]^{f''}\ar[ru] & &\Sigma X\ar[ru]^{\Sigma f} & &}
\eeq
where the object in the center can be taken to be $0$ by~\ref{T1}. Hence $f''=0$ by commutativity. Similarly, if we start with a $g$ having a right inverse, then we choose a right inverse $f$ and arrive at the same diagram, which also shows that $g'=0$. As an aside, note that $C_g\cong \Sigma C_f$ since $\Sigma^{-1} C_{k''}\cong 0$.
\item[(b)]
This is an elaboration on the previous proof. We claim that the conditions of Lemma~\ref{triangulatedbiproduct} are satisfied with
\[
i_1:=f,\qquad p_1:=g,\qquad i_2:=\Sigma^{-1}(g''k''^{-1}),\qquad p_2:=f'.
\]
Indeed, we have $gf=\id_X$ by assumption, $(\Sigma f')\circ g'' k''^{-1} = \id_{\Sigma C_f}$ by commutativity, as well as $f'f=0$ and $(\Sigma g)g''=0$ by Proposition~\ref{vanishcomp}.\qedhere
\end{compactenum}
\end{proof}

By Proposition~\ref{shift}, the first part of this is actually equivalent to the following more symmetrical statement: In a triangle $(f,f',f'')$, if $f'$ has a left inverse, then $f=0$. Dually, if $f'$ has a right inverse, then $f''=0$. In view of Remark~\ref{weakkernel}, these observations do not come as a surprise.

The second part of this proposition is a strong constraint on the structure of an additive category if it is supposed to allow a triangulation. For example, the category of vector spaces over a field $K$ whose dimension is not equal to one is an additive category. On the other hand, it does have left invertible morphisms whose codomain does not decompose into such a biproduct, such as the inclusion of a two-dimensional vector space into a three-dimensional vector space. Consequently, this category does not admit a triangulation no matter how we try to define $\Sigma$.

\section{The $3\!\!\times\!\!3$-lemma, biproduct triangles and triples of composable morphisms}

The $3\!\!\times\!\!3$-lemma that we prove now is a generalization of both the composition axiom and filling morphisms. It will allow for a general existence proof of biproducts, and is also useful in other contexts.

\begin{lem}[The $3\!\!\times\!\!3$-lemma]
\label{3x3lemma}
Given a diagram
\beq
\xymatrix@!@+5pt{{\bullet}\ar[r]^f\ar[d]_g & {\bullet}\ar[r]^{f'}\ar[d]_k & {\bullet}\ar[r]^{f''} & {\bullet}\\
{\bullet}\ar[r]^h\ar[d]_{g'} & {\bullet}\ar[r]^{h'}\ar[d]_{k'} & {\bullet}\ar[r]^{h''} & {\bullet}\\
{\bullet}\ar[d]_{g''} & {\bullet}\ar[d]_{k''} & &\\
{\bullet} & {\bullet} & &}
\eeq
where the rows and columns are triangles, and the square is supposed to commute, it can be completed to a diagram
\beq
\xymatrix@!@+5pt{{\bullet}\ar[r]^(.47)f\ar[d]_(.47)g & {\bullet}\ar[r]^(.47){f'}\ar[d]_(.47)k & {\bullet}\ar[r]^(.47){f''}\ar@{-->}[d]_(.47)m & {\bullet}\ar[d]_(.47){\Sigma g}\\
{\bullet}\ar[r]^(.47)h\ar[d]_(.47){g'} & {\bullet}\ar[r]^(.47){h'}\ar[d]_(.47){k'} & {\bullet}\ar[r]^(.47){h''}\ar@{-->}[d]_(.47){m'} & {\bullet}\ar[d]_(.47){\Sigma g'}\\
{\bullet}\ar@{-->}[r]^(.47)j\ar[d]_(.47){g''} & {\bullet}\ar@{-->}[r]^(.47){j'}\ar[d]_(.47){k''} & {\bullet}\ar@{-->}[r]^(.47){j''}\ar@{-->}[d]_(.47){m''}\ar@{}[rd]|(.37)\circleddash & {\bullet}\ar[d]_(.47){\Sigma g''}\\
{\bullet}\ar[r]^(.47){\Sigma f} & {\bullet}\ar[r]^(.47){\Sigma f'} & {\bullet}\ar[r]^(.47){\Sigma f''} & {\bullet}}
\eeq
where the first three rows and columns are triangles, the last row and last column are negatives of triangles, and all squares commute, except for the bottom right square which anticommutes.
\end{lem}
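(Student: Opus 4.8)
The plan is to reduce the whole diagram to three applications of the composition axiom~\ref{T5} along the diagonal of the commuting top-left square. Set $\ell := kf = hg$ (the two composites agree by the assumed commutativity) and fix a mapping cone triangle $(\ell,\ell',\ell'')$ with cone $C_\ell$. The four given triangles identify the corner objects of the third row and column as mapping cones, $A_{13}=C_f$, $A_{23}=C_h$, $A_{31}=C_g$ and $A_{32}=C_k$, so only the central object $A_{33}$, the third-row and third-column maps, and their compatibilities actually have to be produced. The outer fourth row and fourth column need no construction: they are forced to equal $\Sigma$ of the first row and first column respectively (so $A_{i4}=\Sigma A_{i1}$ and $A_{4j}=\Sigma A_{1j}$), and iterating~\ref{T4} three times together with Proposition~\ref{signs} shows that $\Sigma$ of a triangle is exactly the negative of a triangle, which is precisely what is claimed for those two outer lines.

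First I would apply~\ref{T5} to the factorization $\ell = k\circ f$, feeding in the triangles on $f$, $k$ and $\ell$; this yields a triangle $(u,u',u'')\colon C_f\to C_\ell\to C_k\to\Sigma C_f$ together with connecting relations linking $u,u',u''$ to $f',f'',k',k''$. Symmetrically, applying~\ref{T5} to $\ell = h\circ g$ gives $(p,p',p'')\colon C_g\to C_\ell\to C_h\to\Sigma C_g$. I then \emph{define} the middle column map as $m := p'\circ u\colon C_f\to C_h$ and run~\ref{T5} a third time on this factorization $m = p'\circ u$. In this third octahedron the cone of $u$ is $C_k$ (read off the first triangle) and, after rotating the second triangle by~\ref{T4}, the cone of $p'$ is $\Sigma C_g$; the output is therefore a triangle $(n,n',n'')\colon C_k\to A_{33}\to\Sigma C_g\to\Sigma C_k$, where $A_{33}:=C_m$ is the sought central object. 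The same octahedron hands us the third column $(m,m',m'')\colon C_f\to C_h\to A_{33}\to\Sigma C_f$ as a genuine triangle, and, rotating its output backwards by Proposition~\ref{shift}, the third row $(j,j',j'')\colon C_g\to C_k\to A_{33}\to\Sigma C_g$ with $j := -\Sigma^{-1}n''$, $j' := n$ and $j'' := n'$. Thus $A_{33}$ is simultaneously a cone of $m$ and a cone of $j$, which is the heart of the statement.

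What remains is to check the nine interior squares. The two squares flanking the first row (respectively first column) say exactly that $m$ (respectively $j$) fills the evident morphism of triangles built from the top-left square; these drop out by composing the commutativity data returned by the first two octahedra, and Proposition~\ref{fillingmorphism} guarantees such fillings exist in the first place. The squares interior to the third row and column are built into the commutativity of the third octahedron. I expect the single genuinely delicate point to be the bottom-right square, which must be shown to \emph{anti}commute rather than commute. This is a Koszul-type sign and it is not optional: its source can be traced to the asymmetry in the third octahedron, where $u$ enters through its unrotated triangle while $p'$ enters only after a rotation by~\ref{T4} that inserts a minus sign into the term $-\Sigma p$. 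Tracking that one minus sign coherently to the bottom-right corner, while confirming that the remaining eight squares stay strictly commutative, is the crux of the proof; here Proposition~\ref{signs} provides exactly the freedom needed to normalize the signs so that the lone anticommutativity sits in the prescribed corner, and everything else is bookkeeping with the connecting relations supplied by~\ref{T5}. As a closing sanity check, Proposition~\ref{mappingcone} (or Corollary~\ref{iso}) forces $A_{33}$ to be determined up to isomorphism, so none of the arbitrary choices of cones or filling morphisms can affect the conclusion.
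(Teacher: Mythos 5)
Your proposal follows the paper's proof essentially step for step: the same diagonal triangle on $kf=hg$, the same two preliminary applications of~\ref{T5} to the factorizations $kf$ and $hg$, the same definition of the middle vertical map $m$ as the composite through the diagonal cone, and the same third~\ref{T5} application (with the second triangle rotated by~\ref{T4}), whose relation $(-\Sigma p)j''=(\Sigma u)m''$ is exactly the source of the anticommuting bottom-right square. The only cosmetic difference is that you define $j$ as $-\Sigma^{-1}$ of the third map of the output triangle rather than directly as the composite $u'\circ p$, but these coincide by the uniqueness of that map in~\ref{T5}, so the two arguments are the same.
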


\begin{proof}
As in the proof of Proposition~\ref{fillingmorphism}, complete the diagonal morphism $d:= kf=hg$ to a triangle $(d,d',d'')$:
\beq
\xymatrix@!@+5pt{{\bullet}\ar[r]^f\ar[d]_g\ar[rd]|d & {\bullet}\ar[r]^{f'}\ar[d]_(.47)k & {\bullet}\ar[r]^{f''} & {\bullet}\\
{\bullet}\ar[r]^(.47)h\ar[d]_{g'} & {\bullet}\ar[r]^{h'}\ar[d]_{k'}\ar[rd]|{d'} & {\bullet}\ar[r]^{h''} & {\bullet}\\
{\bullet}\ar[d]_{g''} & {\bullet}\ar[d]_{k''} & \circ\ar[rd]|{d''} & \\
{\bullet} & {\bullet} & & \circ\ar@(ld,rd)@{=}[lll]\ar@(ru,rd)@{=}[uuu]\\&}
\eeq
The two new objects are drawn as empty circles because they are not those objects which will appear in the final diagram at this place. Two applications of the composition axiom with respect to $d=hg$ and $d=kf$ yield triangles $(p,p',p'')$ and $(q,q',q'')$ drawn with dashed arrows as
\beq
\xymatrix@!@+5pt{{\bullet}\ar[r]\ar[d]\ar[rd] & {\bullet}\ar[r]\ar[d] & {\bullet}\ar[r]\ar@/_1.4pc/@{-->}[dd]_(.3)p & {\bullet}\ar[d]\\
{\bullet}\ar[r]\ar[d] & {\bullet}\ar[r]\ar[d]\ar[rd] & {\bullet}\ar[r]\ar@{-->}[rd]^{q''} & {\bullet}\ar[d]\\
{\bullet}\ar[d]\ar@/^1.4pc/@{-->}[rr]^(.3)q & {\bullet}\ar[d]\ar@{-->}[rd]_{p''} & \circ\ar[rd]\ar@{-->}[u]_(.3){q'}\ar@{-->}[l]^(.3){p'} & {\bullet}\\
{\bullet}\ar[r] & {\bullet}\ar[r] & {\bullet} & \circ\ar@(ld,rd)@{=}[lll]\ar@(ru,rd)@{=}[uuu]\\&}
\eeq
where the whole diagram commutes and the unlabeled morphisms are those from the previous diagram. Now define $m:= q'p$, complete it to a triangle $(m,m',m'')$ using~\ref{T2}, and finally apply the composition axiom to the triangles $(p,p',p'')$, $(q',q'',-\Sigma q)$ and $(m,m',m'')$. This~\ref{T5} application 
\beq
\xymatrix@!{{\bullet}\ar@(ru,lu)[rr]^m\ar[rd]_p && {\bullet}\ar@(ru,lu)[rr]^{q''}\ar[rd]^{m'} && {\bullet}\ar@(ru,lu)[rr]^{(\Sigma p')(-\Sigma q)}\ar[rd]^{-\Sigma q} && {\bullet}\\
& {\bullet}\ar[ru]^{q'}\ar[rd]_{p'} && {\bullet}\ar[ru]_{j''}\ar[rd]_{m''} && {\bullet}\ar[ru]_{\Sigma p'}\\
&& {\bullet}\ar[ru]^{j'}\ar@(rd,ld)[rr]_{p''} && {\bullet}\ar[ru]_{\Sigma p}}
\eeq
yields the two new morphisms $j'$ and $j''$ together with the commutativity relations
\beq
m'q'=j'p',\quad q''=j''m',\quad m''j'=p'',\quad -(\Sigma q)j''=(\Sigma p)m''
\eeq
and the assertion that $(j',j'',(\Sigma p')(-\Sigma q))$ forms a triangle. Combining these equations with the second to last diagram and the definition $j:= p'q$ gives a diagram
\beq
\xymatrix@!@+5pt{{\bullet}\ar[r]\ar[d] & {\bullet}\ar[r]\ar[d] & {\bullet}\ar[r]\ar@{-->}[d]^m & {\bullet}\ar[d]\\
{\bullet}\ar[r]\ar[d] & {\bullet}\ar[r]\ar[d] & {\bullet}\ar[r]\ar@{-->}[d]^{m'} & {\bullet}\ar[d]\\
{\bullet}\ar@{-->}[r]^j\ar[d] & {\bullet}\ar@{-->}[r]^{j'}\ar[d] & {\bullet}\ar@{-->}[r]^{j''}\ar@{-->}[d]^(.4){m''}\ar@{}[rd]|\circleddash & {\bullet}\ar[d]\\
{\bullet}\ar[r] & {\bullet}\ar[r] & {\bullet}\ar[r] & {\bullet}}
\eeq
for which (anti-)commutativity of the four lower right squares follows from the equations
\begin{align*}
j'k'=j'p'd'=m'q'd'=m'h', &\qquad j''m'=q''= (\Sigma g')h'', \\[10pt]
(\Sigma f')k''=p''=m''j', &\qquad (\Sigma f'')m'' = (\Sigma d'')(\Sigma p)m'' = -(\Sigma d'')(\Sigma q)j'' = - (\Sigma g'')m'',
\end{align*}
and finally $(j,j',j'')$ then is a triangle by functoriality of $\Sigma$ and Proposition~\ref{shift}.
\end{proof}

To understand the proof, it might also be helpful to consult~\cite[Lemma 2.6]{MayAdd} as an alternative explanation of the same steps. Also, note that the proof of Proposition~\ref{fillingmorphism} is in fact a special case of this, but considerably easier, so it makes sense to consider these two propositions separately.

\begin{rem}
Just like the Puppe sequence and the braid diagram, the $3\!\!\times\!\! 3$-lemma diagram can be extended indefinitely in both directions. It then forms an infinite lattice diagram
\beq
\xymatrix@!@+12pt{\ar@{}[dr]|{\ddots} & \ar@{}[d]|\vdots & {}\ar@{}[d]|\vdots & {}\ar@{}[d]|\vdots & {}\ar@{}[d]|\vdots & {}\ar@{}[d]|\vdots & {}\ar@{}[d]|\vdots & \ar@{}[dl]|\iddots\\{}
\ar@{}[r]|\ldots & {\bullet}\ar[r]^{\Sigma^{-2}j''}\ar[d]_{\Sigma^{-2}m''}\ar@{}[rd]|(.3)\circleddash & {\bullet}\ar[r]^{\Sigma^{-1}j}\ar[d]_{\Sigma^{-1}g''} & {\bullet}\ar[r]^{\Sigma^{-1}j'}\ar[d]_{\Sigma^{-1}k''} & {\bullet}\ar[r]^{\Sigma^{-1}j''}\ar[d]_{\Sigma^{-1}m''}\ar@{}[rd]|(.3)\circleddash & {\bullet}\ar[r]^j\ar[d]_{g''} & {\bullet}\ar[d]_{k''} & \ar@{}[l]|\ldots\\
\ar@{}[r]|\ldots&{\bullet}\ar[r]^{\Sigma^{-1}f''}\ar[d]_{\Sigma^{-1}m} & {\bullet}\ar[r]^f\ar[d]_g & {\bullet}\ar[r]^{f'}\ar[d]_k & {\bullet}\ar[r]^{f''}\ar[d]_m & {\bullet}\ar[r]^{\Sigma f}\ar[d]_{\Sigma g} & {\bullet}\ar[d]_{\Sigma k} & \ar@{}[l]|\ldots\\
\ar@{}[r]|\ldots&{\bullet}\ar[r]^{\Sigma^{-1}h''}\ar[d]_{\Sigma^{-1}m'} & {\bullet}\ar[r]^h\ar[d]_{g'} & {\bullet}\ar[r]^{h'}\ar[d]_{k'} & {\bullet}\ar[r]^{h''}\ar[d]_{m'} & {\bullet}\ar[r]^{\Sigma h}\ar[d]_{\Sigma g'} & {\bullet}\ar[d]_{\Sigma k'} &{}\ar@{}[l]|\ldots\\
\ar@{}[r]|\ldots&{\bullet}\ar[r]^{\Sigma^{-1}j''}\ar[d]_{\Sigma^{-1}m''}\ar@{}[rd]|(.3)\circleddash & {\bullet}\ar[r]^j\ar[d]_{g''} & {\bullet}\ar[r]^{j'}\ar[d]_{k''} & {\bullet}\ar[r]^{j''}\ar[d]_{m''}\ar@{}[rd]|(.3)\circleddash & {\bullet}\ar[r]^{\Sigma j}\ar[d]_{\Sigma g''} & {\bullet}\ar[d]_{\Sigma k''} &{}\ar@{}[l]|\ldots\\
\ar@{}[r]|\ldots&{\bullet}\ar[r]^{f''}\ar[d]_{m} & {\bullet}\ar[r]^{\Sigma f}\ar[d]_{\Sigma g} & {\bullet}\ar[r]^{\Sigma f'}\ar[d]_{\Sigma k} & {\bullet}\ar[r]^{\Sigma f''}\ar[d]_{\Sigma m} & {\bullet}\ar[r]^{\Sigma^2 f}\ar[d]_{\Sigma^2 g} & {\bullet}\ar[d]_{\Sigma^2 k} &{}\ar@{}[l]|\ldots\\
\ar@{}[r]|\ldots&{\bullet}\ar[r]^{h''} & {\bullet}\ar[r]^{\Sigma h} & {\bullet}\ar[r]^{\Sigma h'} & {\bullet}\ar[r]^{\Sigma h''} & {\bullet}\ar[r]^{\Sigma^2h} & {\bullet}&{}\ar@{}[l]|\ldots\\
\ar@{}[ur]|\iddots & {}\ar@{}[u]|\vdots & {}\ar@{}[u]|\vdots & {}\ar@{}[u]|\vdots & {}\ar@{}[u]|\vdots & {}\ar@{}[u]|\vdots & {}\ar@{}[u]|\vdots &{}\ar@{}[ul]|\ddots}
\eeq
where Puppe sequences go along the rows and columns, and all primitive squares commute, except for a sublattice of anticommuting primitive squares as indicated. The suspension functor acts in two ways, namely we can shift the diagram three squares to the right, or we can shift it three squares downwards. Thus the diagram repeats from the lower left to the upper right. Consequently, one can actually wrap the diagram around into a square tessellation of an infinite cylinder, where the cylinder axis extends from the upper left to the lower right. Then the suspension simply acts as a translation of the cylinder.
\end{rem}

We can now conclude the existence of biproducts, as announced earlier.

\begin{cor}
\label{biproductsexist}
Any pair of objects $X,Y\in\C$ has a biproduct $X\oplus Y$, and
\beq
\xymatrix{{}\Sigma^{-1}X\ar[r]^(.6)0 & Y\ar[r]^(.4){i_Y} & X\oplus Y\ar[r]^(.6){p_X} & X}
\eeq
is a triangle, where $i_Y$ and $p_X$ are the biproduct inclusion and projection, respectively.
\end{cor}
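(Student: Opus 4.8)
The plan is to realize the asserted triangle directly and then force its third object to be a biproduct by exhibiting a split morphism and invoking Proposition~\ref{split}. First I would apply~\ref{T2} to the zero morphism $0:\Sigma^{-1}X\lra Y$, completing it to a triangle
\[
\xymatrix{\Sigma^{-1}X\ar[r]^0 & Y\ar[r]^i & Z\ar[r]^p & X}
\]
whose last object is $\Sigma(\Sigma^{-1}X)=X$ on the nose, since $\Sigma$ is a genuine automorphism. Everything then reduces to showing that $i:Y\lra Z$ has a left inverse and that this identifies $Z$ with $X\oplus Y$, with $i$ and $p$ the biproduct inclusion and projection.

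To produce the left inverse I would feed the triangle to a $\mathrm{Hom}$-functor. Since $\C(Y,\cdot)$ is homological (Proposition~\ref{homhom}), the contravariant $\C(\cdot,Y)$ is cohomological, so it turns the Puppe sequence into a long exact sequence containing
\[
\xymatrix{\C(Z,Y)\ar[r]^{i^*} & \C(Y,Y)\ar[r]^{0^*} & \C(\Sigma^{-1}X,Y)}
\]
The right-hand map is induced by the zero morphism and so vanishes; exactness at $\C(Y,Y)$ then forces $i^*$ to be surjective, whence $\id_Y=p_1 i$ for some $p_1:Z\lra Y$. I expect this to be the one step carrying genuine content: the rest is formal, but the existence of the retraction really uses half-exactness of the $\mathrm{Hom}$-functor, i.e.\ the filling-morphism machinery underlying Proposition~\ref{homhom}.

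With $i$ left-invertible, Proposition~\ref{split}(b) applies essentially verbatim: $i$ is the inclusion $i_1$ of a biproduct $Z\cong Y\oplus C_i$, and the associated projection onto the second summand is the second morphism of any triangle completing $i$. By Proposition~\ref{shift} the rotation $(i,p,0)$ of our triangle is again a triangle, so $C_i=X$ and that projection is exactly $p$. Hence $Z\cong Y\oplus X=X\oplus Y$, the map $i$ is the biproduct inclusion $i_Y$, and $p$ is the biproduct projection $p_X$; reading the starting triangle $(0,i,p)$ with these identifications yields precisely the asserted triangle. One could instead assemble the biproduct from two copies of the split triangle by means of the $3\!\times\!3$-lemma, but routing through Proposition~\ref{split} is shorter.
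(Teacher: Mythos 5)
Your proof is correct, and it takes a genuinely different route from the paper's. The paper places this corollary immediately after the $3\!\times\!3$-lemma precisely because its proof applies Lemma~\ref{3x3lemma} to a grid of trivial triangles built from~\ref{T1}: this produces $Z$ together with all four structure maps $i_X$, $i_Y$, $p_X$, $p_Y$ simultaneously, the biproduct equations come from commutativity and Proposition~\ref{vanishcomp}, and Lemma~\ref{triangulatedbiproduct} plus Proposition~\ref{shift} conclude; the asserted triangle is literally the third column of the resulting diagram. You instead complete the zero morphism $\Sigma^{-1}X\to Y$ to a triangle via~\ref{T2}, produce a retraction of $i$ by exactness of $\mathrm{Hom}$, and let Proposition~\ref{split}(b) assemble the biproduct; since~\ref{split}(b) rests only on~\ref{T5}, Lemma~\ref{triangulatedbiproduct} and Proposition~\ref{vanishcomp}, your argument bypasses the $3\!\times\!3$-lemma entirely and shows the corollary is already available at the end of Section~4, at the modest price of invoking dual (cohomological) machinery. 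Two small points. First, your justification that $\C(\cdot,Y)$ is cohomological is misphrased: it is not a consequence of $\C(Y,\cdot)$ being homological, but rather the dual statement, namely Proposition~\ref{homhom} applied to $\C^{\mathrm{op}}$, which is triangulated by Remark~\ref{dual}; alternatively, you could avoid cohomological functors altogether by citing part (b) of Remark~\ref{weakkernel}: $i$ is the mapping cone inclusion of the zero morphism, hence a weak cokernel of it, so $\id_Y$ (which trivially kills that zero morphism) extends along $i$, and such an extension is exactly the desired retraction. Second, when invoking Proposition~\ref{split}(b) you implicitly choose the completing triangle of $i$ to be the rotation $(i,p,0)$, which is a triangle by Proposition~\ref{shift} as you say; this is legitimate because the proof of~\ref{split}(b) works for an arbitrary chosen completion, and it is precisely this choice that makes the projection come out as $p$ on the nose, so that the triangle you end with is the asserted one rather than merely isomorphic to it.
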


\begin{proof}
An application of the $3\!\!\times\!\!3$-lemma results in the diagram of triangles
\beq
\xymatrix{0\ar[r]\ar[d] & {}\Sigma^{-1}X\ar@{=}[r]\ar[d] & {}\Sigma^{-1}X\ar[r]\ar[d]^m & 0\ar[d]\\{}
\Sigma^{-1}Y\ar[r]\ar@{=}[d] & 0\ar[r]\ar[d] & Y\ar@{=}[r]\ar[d]^{i_Y} & Y\ar@{=}[d]\\{}
\Sigma^{-1}Y\ar[r]^j\ar[d] & X\ar[r]^{i_X}\ar@{=}[d] & Z\ar[r]_{p_Y}\ar[d]_{p_X} & Y\ar[d]\\
0\ar[r] & X\ar@{=}[r] & X\ar[r] & 0}
\eeq
where we have already named the morphisms in a suggestive manner. By commutativity, $p_Xi_X=\id_X$ and $p_Yi_Y=\id_Y$ are two of the biproduct relations, while $p_Yi_X=0=p_Xi_Y$ also hold by Proposition~\ref{vanishcomp}. The assertion then follows from Lemma~\ref{triangulatedbiproduct} and Proposition~\ref{shift} since $m=0$ by commutativity. The asserted triangle is identical to the third column of the diagram.
\end{proof}

\begin{prop}
\label{sumtriangles}
For any two triangles
\beq
\xymatrix{X_1\ar[r]^{f_1} & Y_1\ar[r]^{g_1} & Z_1\ar[r]^{h_1} & {}\Sigma X_1\\
X_2\ar[r]^{f_2} & Y_2\ar[r]^{g_2} & Z_2\ar[r]^{h_2} & {}\Sigma X_2}
\eeq
their direct sum
\beq
\xymatrix{X_1\oplus X_2\ar[rr]^{f_1\oplus f_2} && Y_1\oplus Y_2\ar[rr]^{g_1\oplus g_2} && Z_1\oplus Z_2\ar[rr]^(.45){h_1\oplus h_2} && {}\Sigma(X_1\oplus X_2)}
\eeq
is a triangle as well.
\end{prop}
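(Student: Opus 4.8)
The plan is to complete $f_1\oplus f_2$ to a genuine triangle using~\ref{T2} and then exhibit an isomorphism of candidate triangles between that genuine triangle and the direct-sum candidate triangle, after which~\ref{T3} immediately finishes the proof. So I would start by choosing, via~\ref{T2}, a triangle $(f_1\oplus f_2,u,v)$ with some mapping cone $C$ of $f_1\oplus f_2$, and then compare it with the candidate triangle $(f_1\oplus f_2,\,g_1\oplus g_2,\,h_1\oplus h_2)$.

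The next step is to build a comparison morphism $n\colon Z_1\oplus Z_2\lra C$. The biproduct inclusions $i_{X_1},i_{Y_1}$ make the left-hand square of
\[
\xymatrix{X_1\ar[r]^{f_1}\ar[d]_{i_{X_1}} & Y_1\ar[r]^{g_1}\ar[d]_{i_{Y_1}} & Z_1\ar[r]^{h_1} & \Sigma X_1\\ X_1\oplus X_2\ar[r]^{f_1\oplus f_2} & Y_1\oplus Y_2\ar[r]^{u} & C\ar[r]^{v} & \Sigma(X_1\oplus X_2)}
\]
commute, since $(f_1\oplus f_2)\circ i_{X_1}=i_{Y_1}\circ f_1$; hence~\ref{fillingmorphism} yields a filling morphism $n_1\colon Z_1\lra C$, and symmetrically $n_2\colon Z_2\lra C$. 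Copairing these gives $n\colon Z_1\oplus Z_2\lra C$ with $n\circ i_{Z_1}=n_1$ and $n\circ i_{Z_2}=n_2$. Using the biproduct relations together with the commutativities $n_j\circ g_j=u\circ i_{Y_j}$ and $v\circ n_j=(\Sigma i_{X_j})\circ h_j$ supplied by the two filling diagrams, one checks directly that $n\circ(g_1\oplus g_2)=u$ and $v\circ n=h_1\oplus h_2$; here one uses that $\Sigma$ preserves biproducts (Remark~\ref{defrems}\ref{suspadd}), so that the biproduct inclusions of $\Sigma(X_1\oplus X_2)$ are exactly the $\Sigma i_{X_j}$. Thus $(\id,\id,n,\id)$ is a morphism of candidate triangles from the direct-sum candidate triangle to the genuine triangle $(f_1\oplus f_2,u,v)$.

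The crux, and the only real work, is to show that $n$ is an isomorphism. For this I would apply an arbitrary homological functor $H$. Since $H$ is additive, the Puppe sequence of the direct-sum candidate triangle is carried to the termwise direct sum of the two long exact sequences coming from the Puppe sequences of $(f_1,g_1,h_1)$ and $(f_2,g_2,h_2)$, and a finite direct sum of exact sequences of abelian groups is again exact; note that this exactness does \emph{not} presuppose that the direct-sum candidate triangle is a triangle. On the other side, $H$ applied to the Puppe sequence of $(f_1\oplus f_2,u,v)$ is long exact because that sequence is a genuine triangle. The morphism $(\id,\id,n,\id)$, extended along both Puppe sequences, becomes a chain map between these two long exact sequences whose components are identities except at the $Z$-spots, where they are $H(n)$ and its suspensions. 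The five lemma then forces $H(n)$ to be an isomorphism, and since this holds for every homological functor $H$, Corollary~\ref{yonedacor} shows that $n$ itself is an isomorphism. Hence $(\id,\id,n,\id)$ is an isomorphism of candidate triangles whose target is a triangle, so by~\ref{T3} the direct-sum candidate triangle is a triangle, as claimed.
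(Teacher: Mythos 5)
Your proof is correct and follows essentially the same route as the paper's: complete $f_1\oplus f_2$ to a genuine triangle via~\ref{T2}, assemble a comparison morphism from filling morphisms using the biproduct's universal property, and then conclude with homological functors, the five lemma, Corollary~\ref{yonedacor}, and~\ref{T3}. The only (immaterial) difference is that you build the comparison map $Z_1\oplus Z_2\lra C$ from the biproduct \emph{inclusions} and the coproduct property, whereas the paper builds it in the opposite direction $W\lra Z_1\oplus Z_2$ from the \emph{projections} and the product property.
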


\noindent Note that explicit mention of the natural isomorphism $\Sigma X_1\oplus\Sigma X_2\cong \Sigma(X_1\oplus X_2)$ has been omitted; see Remark~\ref{defrems}\ref{suspadd}.

\begin{proof}
There might be a proof of this using the $3\!\!\times\!\!3$-lemma, but it is actually easier to apply homological functors, the five lemma, and the Yoneda lemma in the form of Corollary~\ref{yonedacor}. This proof is very similar in spirit to the proof of Proposition~\ref{mappingcone}. By~\ref{T2}, construct a triangle
\beq
\xymatrix{X_1\oplus X_2\ar[rr]^{f_1\oplus f_2} && Y_1\oplus Y_2\ar[rr] && W\ar[rr] && {}\Sigma(X_1\oplus X_2)}
\eeq
Then for each pair of biproduct projection maps $j=1,2$, there is a filling morphism between triangles
\beq
\xymatrix{X_1\oplus X_2\ar[rr]^{f_1\oplus f_2}\ar[d] && Y_1\oplus Y_2\ar[rr]\ar[d] && W\ar[rr]\ar[d] && {}\Sigma(X_1\oplus X_2)\ar[d]\\
X_j\ar[rr]^{f_j} && Y_j\ar[rr] && Z_j\ar[rr] && \Sigma X_j}
\eeq
By the universal property of the biproduct as a product, these lift to
\beq
\xymatrix{X_1\oplus X_2\ar[rr]^{f_1\oplus f_2}\ar@{=}[d] && Y_1\oplus Y_2\ar[rr]\ar@{=}[d] && W\ar[rr]\ar[d] && {}\Sigma(X_1\oplus X_2)\ar@{=}[d]\\
X_1\oplus X_2\ar[rr]^{f_1\oplus f_2} && Y_1\oplus Y_2\ar[rr] && Z_1\oplus Z_2\ar[rr] && \Sigma (X_1\oplus X_2)}
\eeq
Under any homological functor, the first row has a long exact sequence since it is a triangle. The second row has a long exact sequence since it is a biproduct of triangles, so a homological functor maps it to a direct sum of long exact sequences. Hence the third vertical morphism also is an isomorphism by the five lemma and Corollary~\ref{yonedacor}.
\end{proof}

A special case of this is a shift of the triangle asserted Corollary~\ref{biproductsexist}: The biproduct of the pair of triangles
\beq
\xymatrix@R-15pt{X\ar[r] & 0 \ar[r] & \Sigma X\ar@{=}[r] & \Sigma X\\
0\ar[r] & Y\ar@{=}[r] & Y\ar[r] & 0}
\eeq
is given by the biproduct triangle
\beq
\xymatrix@+10pt{X\ar[r]^0 & Y\:\ar@{^{(}->}[r]^{i_Y} & \Sigma X\oplus Y\ar@{->>}[r]^(.57){p_{\Sigma X}} & \Sigma X}
\eeq

\begin{rem}
\label{nonuniquefm}
Now it is also possible to give an example (of a very general kind) of a non-unique filling morphism:
\beq
\xymatrix@+10pt{{}\Sigma^{-1}X\ar[rr]^0\ar@{=}[d] && Y\:\ar@{^{(}->}[rr]\ar@{=}[d] && X\oplus Y\ar@{->>}[rr]\ar[d]_{\left(\begin{array}{cc}\mathrm{id_X}&0\\f&\mathrm{id_Y}\end{array}\right)} && X\ar@{=}[d]\\{}
\Sigma^{-1}X\ar[rr]^0 && Y\:\ar@{^{(}->}[rr] && X\oplus Y\ar@{->>}[rr] && X}
\eeq
In this diagram, $f$ can be any element in $\C(X,Y)$ whatsoever, and the vertical morphism is a filling morphism between the two given triangles. Note that the matrix is always invertible, in accordance with Proposition~\ref{mappingcone}.
\end{rem}

We end our investigation of homological algebra in triangulated categories with a result on compositions of three morphisms. This is part of the axiom \textsc{Tr 5} proposed in~\cite{Schm}. We do not know whether Schmidt's \textsc{Tr 5} can be derived in full generality from the standard axioms~\ref{T1}--\ref{T5}.

\begin{prop}
\label{triplecomp}
Let
\[
\xymatrix{ \bullet \ar[r]^f & \bullet \ar[r]^g & \bullet \ar[r]^h & \bullet }
\]
be a triple of composable morphisms in $\C$ (not necessarily a candidate triangle). Then there is a diagram
\[
\xymatrix{ & C_{gf} \ar[dr] \\
C_f \ar[ur] \ar[rr] & & C_{hgf} \ar[r] & C_{hg} \ar[r] & \Sigma C_f }
\]
in which the horizontal morphisms form a triangle.
\end{prop}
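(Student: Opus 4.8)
The plan is to prove the statement by iterating the composition axiom~\ref{T5} and then identifying two of the resulting triangles via the uniqueness of the mapping cone. Write $f:X\to Y$, $g:Y\to Z$, $h:Z\to W$, and fix mapping cone triangles for $f$, $g$, $h$, $gf$, $hg$ and $hgf$, with the usual inclusions $i_\bullet$ and projections $p_\bullet$. The single fact I will use repeatedly, read off directly from the composition axiom diagram, is that for a composite $v\circ u$ the third (``connecting'') morphism $k''$ of the triangle $C_u\to C_{vu}\to C_v\xrightarrow{k''}\Sigma C_u$ produced by~\ref{T5} is exactly $k''=(\Sigma i_u)\,p_v$, where $i_u$ is the inclusion of the cone of the first leg $u$ and $p_v$ the projection of the cone of the second leg $v$.

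First I would apply~\ref{T5} to $gf=g\circ f$. This yields a morphism $a:C_f\to C_{gf}$ in a triangle $C_f\xrightarrow{a}C_{gf}\xrightarrow{a'}C_g\to\Sigma C_f$, together with the relation $a'\,i_{gf}=i_g$ coming from the commutativity of the~\ref{T5} diagram. Next I would apply~\ref{T5} to $hgf=h\circ(gf)$, obtaining a morphism $b:C_{gf}\to C_{hgf}$ inside a triangle $C_{gf}\xrightarrow{b}C_{hgf}\xrightarrow{b'}C_h\xrightarrow{b''}\Sigma C_{gf}$, whose connecting morphism is $b''=(\Sigma i_{gf})\,p_h$ by the formula above. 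The composite $ba:C_f\to C_{hgf}$ is the morphism I place along the top of the diagram, automatically factoring through $C_{gf}$ as required.

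The heart of the argument is to compute the mapping cone of $ba$. First complete $ba$ to a triangle $C_f\xrightarrow{ba}C_{hgf}\to C_{ba}\to\Sigma C_f$ by~\ref{T2}; this will serve as the horizontal triangle. Now apply~\ref{T5} to the composite $b\circ a$, using the triangles just built for $a$, for $b$, and for $ba$. Its output is a second triangle $C_g\to C_{ba}\to C_h\xrightarrow{\tau_4}\Sigma C_g$ with $\tau_4=(\Sigma a')\,b''$. Substituting $b''=(\Sigma i_{gf})p_h$ and then using $a'\,i_{gf}=i_g$ together with functoriality of $\Sigma$ collapses this to $\tau_4=(\Sigma i_g)\,p_h$. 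Separately, applying~\ref{T5} to $hg=h\circ g$ gives a triangle $C_g\to C_{hg}\to C_h\xrightarrow{\tau_5}\Sigma C_g$ with $\tau_5=(\Sigma i_g)\,p_h$ by the same formula, so $\tau_4=\tau_5$.

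With the connecting morphisms matched, I would rotate both triangles by Proposition~\ref{shift}: $C_{ba}$ and $C_{hg}$ are then both mapping cones of the single morphism $-\Sigma^{-1}\tau_4=-\Sigma^{-1}\tau_5:\Sigma^{-1}C_h\to C_g$, hence isomorphic by the uniqueness of the mapping cone (Proposition~\ref{mappingcone}). Transporting the horizontal triangle $C_f\xrightarrow{ba}C_{hgf}\to C_{ba}\to\Sigma C_f$ along this isomorphism replaces $C_{ba}$ by $C_{hg}$ and produces the asserted diagram. The main obstacle is exactly the identification $\tau_4=\tau_5$: it requires tracking the \emph{precise} connecting morphisms supplied by~\ref{T5}, not merely the objects and the first two morphisms, and it is here that the explicit formula $k''=(\Sigma i_u)p_v$ and the two inclusion/projection relations do all the work.
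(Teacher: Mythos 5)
Your proposal is correct and follows essentially the same route as the paper's proof: the same four applications of~\ref{T5} (to $gf$, to $hgf=h\circ(gf)$, to $hg$, and to the composite $ba$), the same key identity $(\Sigma a')b''=(\Sigma i_{g})p_h$ matching the two connecting morphisms, and the same final identification of $C_{ba}\cong C_{hg}$ via rotation and uniqueness of mapping cones. The only differences are cosmetic: your $\tau_4=\tau_5$ is the paper's $s=r$, and you rotate backward to view both objects as cones of $-\Sigma^{-1}\tau_4$ where the paper rotates forward to get $C_{ba}\cong\Sigma^{-1}C_s\cong\Sigma^{-1}C_r\cong C_{hg}$.
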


\begin{proof}
We start by choosing mapping cone triangles for all the relevant morphisms $f$, $g$, $h$, $gf$, $hg$ and $hgf$ involved containing all the mapping cone objects appearing in the target diagram. Then we consider three applications of~\ref{T5}; in each case, we only name those morphisms that are of relevance to the argument, and write $i_\ast$ and $p_\ast$ for a mapping cone inclusion and projection, respectively, where $\ast$ is the morphism that we take the mapping cone of. First, for the composite $gf$, the~\ref{T5} application gives
\[
\xymatrix@!{ {\bullet}\ar@(ru,lu)[rr]^{gf}\ar[rd]_f && {\bullet}\ar@(ru,lu)[rr]^{i_g} \ar[rd]^{i_{gf}} && C_g \ar@(ru,lu)[rr]\ar[rd] && {\bullet}\\
& {\bullet}\ar[ru]^g \ar[rd] && C_{gf} \ar[ru]_{i_\alpha} \ar[rd] && {\bullet}\ar[ru] \\
&& C_f \ar[ru]_\alpha \ar@(rd,ld)[rr] && \bullet \ar[ru] \\ }
\]
Second, for the composite $hg$, we have
\[
\xymatrix@!{ {\bullet}\ar@(ru,lu)[rr]^{hg}\ar[rd]_g && {\bullet}\ar@(ru,lu)[rr] \ar[rd] && C_h \ar@(ru,lu)[rr]^r \ar[rd]_{p_h} && \Sigma C_g \\
& {\bullet}\ar[ru]^h \ar[rd] && C_{hg} \ar[ru] \ar[rd] && {\bullet}\ar[ru]_{\Sigma i_g} \\
&& C_g \ar[ru] \ar@(rd,ld)[rr] && \bullet \ar[ru] \\}
\]
And third, for the composite $h(gf)$, we can assume
\[
\xymatrix@!{ {\bullet}\ar@(ru,lu)[rr]^{hgf}\ar[rd]_{gf} && {\bullet}\ar@(ru,lu)[rr] \ar[rd] && C_h \ar@(ru,lu)[rr]^{p_\beta} \ar[rd]^{p_h} && \Sigma C_{gf} \\
& {\bullet}\ar[ru]^h \ar[rd] && C_{hgf} \ar[ru] \ar[rd] && {\bullet}\ar[ru]_{\Sigma i_{gf}} \\
&& C_{gf} \ar[ru]^\beta \ar@(rd,ld)[rr] && \bullet \ar[ru] }
\]\\

\noindent Note that some of them appear in two of these diagrams. We can now consider the composite $\beta\alpha\: :\: C_f\to C_{hgf}$ and use another instance of~\ref{T5}, this time with respect to the composition $\beta\alpha$ together with the identifications $C_\alpha \cong C_g$ and $C_\beta \cong C_h$ which are part of the previous diagrams.
\[
\xymatrix@!{ C_f \ar@(ru,lu)[rr]^{\beta\alpha} \ar[rd]_{\alpha} && C_{hgf} \ar@(ru,lu)[rr] \ar[rd] && C_h \ar@(ru,lu)[rr]^s \ar[rd]^{p_\beta} && \Sigma C_g \\
& C_{gf} \ar[ru]^\beta \ar[rd] && C_{\beta\alpha} \ar[ru] \ar[rd] && \Sigma C_{gf} \ar[ru]_{\Sigma i_\alpha} \\
&& C_{g} \ar[ru] \ar@(rd,ld)[rr] && \Sigma C_f \ar[ru] }
\]
\\

\noindent Our goal is to identify $C_{\beta\alpha}\cong C_{hg}$. This is based on showing that $s=r$, which implies that $C_{\beta\alpha} \cong \Sigma^{-1} C_s \cong \Sigma^{-1} C_r \cong C_{hg}$. But this indeed holds true, since 
\[
s = (\Sigma i_\alpha) p_\beta = (\Sigma i_\alpha) (\Sigma i_{gf}) p_h = \Sigma(i_\alpha i_{gf}) p_h = (\Sigma i_g) p_h = r. \qedhere
\]
\end{proof}

\section{Triangulated functors and triangulated subcategories}
\label{verdier}

Given triangulated categories $\D$ and $\C$ with respective suspensions $\Sigma_D$ and $\Sigma_C$, when can we say that a functor $F:\D\lra\C$ preserves the structure, namely the triangulation? Certainly it has to preserve the suspension and map triangles to triangles, both in an appropriate sense. It would be too strong a condition to require the commutativity relation $F\Sigma_D=\Sigma_C F$ on the nose. Instead, it should be sufficient for the two compositions to be merely naturally isomorphic via a natural transformation $\eta:F\Sigma_D\stackrel{\sim}{\lra} F\Sigma_C$. But then, we cannot expect $F$ to preserve triangles on the nose, since $F$ does not even map candidate triangles to candidate triangles in the strict sense. So, also the triangles need to be suitably dealt with using $\eta$:

\begin{defn}
A functor $F:\D\lra\C$ between triangulated categories together with a natural isomorphism $\eta:F\Sigma_D\stackrel{\sim}{\lra} F\Sigma_C$ is called a \emph{triangulated functor} if for any triangle
\beq
\xymatrix{X\ar[r]^f & Y\ar[r]^g & Z\ar[r]^(.4)h & {}\Sigma_DX}
\eeq
the horizontal part of
\beq
\xymatrix{F(X)\ar[r]^{F(f)} & F(Y)\ar[r]^{F(g)} & F(Z)\ar[rr]\ar[rd]_{F(h)} && {}\Sigma_CF(X)\\
&&&F(\Sigma_DX)\ar[ru]_{\eta_X}^\sim &}
\eeq
is a triangle in $\C$.
\end{defn}

Formally, $\eta$ is part of the data of a triangulated functor. On the other hand, there is usually an obvious choice for what $\eta$ is, and one typically leaves it unmentioned in concrete cases. In the case of the canonical inclusion functor for a triangulated subcategory (see the next definition), it actually is the identity transformation anyway, since the inclusion functor \emph{does} preserve $\Sigma$ on the nose.

\begin{defn}
\label{trisubcat}
A full subcategory $\D\subseteq\C$ is called a \emph{triangulated subcategory} if it is closed under (de)suspension and itself satisfies the axioms~\ref{T1} to~\ref{T5}, where the triangles are exactly those candidate triangles which are also triangles in $\C$.
\end{defn}

By the fullness premise, a triangulated subcategory is determined by the class of its objects. Thus, we can---and will---freely confuse a triangulated subcategory with the class of its objects. In this formulation, a triangulated subcategory is a class of objects closed under suspension and desuspension, and containing some mapping cone for every morphism between any two of its objects:

\begin{lem}
\label{trisubcatcrit}
A class of objects in $\C$ is a triangulated subcategory if and only if it is closed under (de)suspension and contains at least one mapping cone for any morphism between any two of its objects.
\end{lem}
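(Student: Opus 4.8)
The plan is to read off both implications from Definition~\ref{trisubcat}, exploiting the fact that the triangles of $\D$ are \emph{declared} to be exactly those candidate triangles with objects in $\D$ that happen to be triangles in $\C$. Because of this, once $\D$ is full and closed under (de)suspension, essentially every axiom transfers verbatim from $\C$; the only genuine content is axiom~\ref{T2} and the production of a zero object. For the ``only if'' direction I would argue directly: a triangulated subcategory is closed under (de)suspension by definition, and axiom~\ref{T2} for $\D$ says every morphism between objects of $\D$ completes to a triangle in $\D$, whose third object is by definition a mapping cone lying in $\D$. That is precisely the asserted property.

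For the ``if'' direction, suppose $\D$ is closed under (de)suspension and contains a mapping cone of every morphism between its objects. Since we identify $\D$ with the full subcategory on these objects, $\D$ inherits the preadditive structure from $\C$, and $\Sigma$ restricts to an automorphism of $\D$ by closure under (de)suspension; so it remains to produce a zero object and to verify~\ref{T1}--\ref{T5}. A zero object comes from applying the hypothesis to an identity morphism $\id_X$: by Corollary~\ref{iso} (together with uniqueness of the mapping cone) any mapping cone of $\id_X$ is isomorphic to $0$, so $\D$ contains a zero object. This yields~\ref{T1} after transporting the standard triangle $X\xrightarrow{\id}X\to 0\to\Sigma X$ along that isomorphism using~\ref{T3} in $\C$. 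Axiom~\ref{T2} is just the mapping-cone hypothesis. For~\ref{T3} one uses that fullness makes an isomorphism of candidate triangles in $\D$ the same as one in $\C$, so~\ref{T3} in $\C$ suffices; for~\ref{T4} one applies~\ref{T4} in $\C$ and notes that the new objects $\Sigma X,\Sigma Y$ lie in $\D$ by closure under suspension.

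The axiom needing a moment's care is~\ref{T5}. Here the three hypothesised triangles $(f,f',f'')$, $(g,g',g'')$, $(h,h',h'')$ are triangles in $\D$, so \emph{all} of their objects already lie in $\D$, and the remaining objects of the composition-axiom diagram are their suspensions, again in $\D$ by closure. I would then simply apply~\ref{T5} in $\C$ to obtain $k,k',k''$ together with the triangle $(k,k',k'')$; its objects are among those just listed together with one further suspension, hence lie in $\D$, while $k,k',k''$ are morphisms of $\D$ by fullness. Thus $(k,k',k'')$ is a candidate triangle in $\D$ that is a triangle in $\C$, i.e.\ a triangle in $\D$, and every commutativity relation holds because it holds in $\C$. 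Once~\ref{T1}--\ref{T5} are in place, Corollary~\ref{biproductsexist} applied to $\D$ guarantees that $\D$ has biproducts, so $\D$ is a genuine triangulated category, hence a triangulated subcategory.

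The main obstacle is not any single deep step but the bookkeeping in~\ref{T5}: one must check that every object of the composition-axiom diagram, both in the premise and in the conclusion, is forced into $\D$. It is exactly closure under (de)suspension (for conclusion objects such as $\Sigma H$) together with the definition of a triangle in $\D$ (for the premise objects, which are the mapping cones supplied by the three given triangles) that secures this, and it is worth stating explicitly that fullness is what lets the filling data $k,k',k''$ produced in $\C$ count as morphisms of $\D$.
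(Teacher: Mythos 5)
Your proof is correct and follows essentially the same route as the paper's (much terser) argument: axioms~\ref{T3} and~\ref{T4} transfer automatically since the triangulation is inherited from $\C$, axioms~\ref{T1},~\ref{T2} and~\ref{T5} follow from fullness plus the mapping-cone hypothesis, and the zero object of $\D$ is obtained as a mapping cone of an identity morphism, possibly not equal on the nose to the zero object of $\C$. Your explicit bookkeeping for~\ref{T5} and the transport of the~\ref{T1} triangle along the isomorphism $C_{\id_X}\cong 0$ via~\ref{T3} simply spell out what the paper leaves implicit.
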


\begin{proof}
The ``only if'' part is clear. In the ``if'' direction, we need to show that~\ref{T1}--\ref{T5} hold. \ref{T3} and~\ref{T4} hold automatically since the triangulation is the one inherited from $\C$, while~\ref{T1},~\ref{T2} and~\ref{T5} follow from the fullness assumption together with existence of mapping cones; concerning~\ref{T1}, the zero object in the subcategory is guaranteed to existence as the mapping cone $\id_X$, although it might not coincide ``on the nose'' with the chosen zero object of $\C$.
\end{proof}

\begin{defn}
A triangulated subcategory $\D\subseteq\C$ is called a \emph{thick triangulated subcategory}, if the following condition holds: whenever $Y\in\D$ and there is a left invertible $f:X\lra Y$ in $\C$, then $X\in\D$.
\end{defn}

In  particular, since every isomorphism has a left inverse, a thick triangulated subcategory is closed under isomorphism of objects. Moreover, Proposition~\ref{splitbiproduct} implies that $\D$ is thick if and only if the following condition holds: whenever $\D$ contains an object isomorphic to $X\oplus Y$, then it also contains $X$ and $Y$. This is the condition that one most frequently uses in practice to verify that a triangulated subcategory is thick.

\begin{ex}
\label{acyclic}
Let $H:\C\lra\Ab$ be a homological functor. Then an object $X\in\C$ is called $H$-\emph{acyclic} if $H(\Sigma^nX)=0$ for all $n\in\Z$. The long exact sequences for $H$ show that the class of $H$-acyclic objects is a thick triangulated subcategory of $\C$.
\end{ex}

For any triangulated subcategory $\D\subseteq\C$, the smallest thick triangulated subcategory which contains $\D$ is called the \emph{thick closure} of $\D$ and denoted by $\overline{\D}$; since the intersection of a collection of thick triangulated subcategories is again a thick triangulated subcategory, it is clear that such a smallest $\overline{\D}$ indeed exists. It is also clear that the assignment $\D\mapsto\overline{\D}$ is a closure operation.

\begin{prop}
\label{thickclosure}
$\overline{\D}$ has as objects exactly those $X$ such that $X\oplus\Sigma X$ is isomorphic to an object in $\D$.
\end{prop}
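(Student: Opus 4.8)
The plan is to show that the class $\mathcal{S}$ of all objects $X$ for which $X\oplus\Sigma X$ is isomorphic to an object of $\D$ coincides with $\overline{\D}$, proving the two inclusions separately. The inclusion $\mathcal{S}\subseteq\overline{\D}$ is the easy one: if $X\in\mathcal{S}$, then $X\oplus\Sigma X$ is isomorphic to an object of $\D\subseteq\overline{\D}$, and since $\overline{\D}$ is thick it contains every summand of an object it contains (Proposition~\ref{splitbiproduct}), so $X\in\overline{\D}$.

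For the reverse inclusion $\overline{\D}\subseteq\mathcal{S}$, I would verify that $\mathcal{S}$ is itself a thick triangulated subcategory containing $\D$; minimality of $\overline{\D}$ then forces $\overline{\D}\subseteq\mathcal{S}$. The crux, which I expect to be the main obstacle to set up correctly, is the observation that every object $X$ which is isomorphic to a direct summand of some $D\in\D$ already lies in $\mathcal{S}$. To prove it, write $D\cong X\oplus W$ and consider the idempotent endomorphism $p\colon D\to D$ which is the identity on the summand $W$ and zero on $X$. On one hand $p$ is a morphism between objects of $\D$, so by Lemma~\ref{trisubcatcrit} one of its mapping cones lies in $\D$. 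On the other hand, transporting $p$ across $D\cong X\oplus W$ turns it into $0_X\oplus\id_W$, whose mapping cone Proposition~\ref{sumtriangles} computes as $C_{0_X}\oplus C_{\id_W}\cong (X\oplus\Sigma X)\oplus 0$; here $C_{\id_W}\cong 0$ by~\ref{T1}, and $C_{0_X}\cong X\oplus\Sigma X$ is exactly the biproduct triangle of Corollary~\ref{biproductsexist}. Since the vertical maps of the transporting square are isomorphisms, Corollary~\ref{mapconeiso} identifies these two cones, so $X\oplus\Sigma X$ is isomorphic to an object of $\D$, that is, $X\in\mathcal{S}$.

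Granting this crux, the remaining properties fall out routinely. First $\D\subseteq\mathcal{S}$, since each $D\in\D$ is a summand of itself. Closure of $\mathcal{S}$ under $\Sigma$ and $\Sigma^{-1}$ is immediate from $\Sigma^{\pm1}(X\oplus\Sigma X)\cong\Sigma^{\pm1}X\oplus\Sigma(\Sigma^{\pm1}X)$ together with closure of $\D$ under (de)suspension. For closure under mapping cones, which by Lemma~\ref{trisubcatcrit} is all that remains in order for $\mathcal{S}$ to be a triangulated subcategory, take $f\colon X\to Y$ with $X,Y\in\mathcal{S}$: by Proposition~\ref{sumtriangles} the sum of the mapping cone triangle of $f$ with its suspension exhibits $C_f\oplus\Sigma C_f$ as a mapping cone of $f\oplus\Sigma f\colon X\oplus\Sigma X\to Y\oplus\Sigma Y$, and since both source and target are isomorphic to objects of $\D$, Corollary~\ref{mapconeiso} identifies this with a mapping cone of a morphism inside $\D$, hence an object of $\D$; thus $C_f\in\mathcal{S}$. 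Finally, thickness follows from the crux via the summand reformulation (Proposition~\ref{splitbiproduct}): if $\mathcal{S}$ contains an object isomorphic to $X\oplus Y$, say $X\oplus Y\cong S$ with $S\oplus\Sigma S$ isomorphic to an object of $\D$, then $X$ is a summand of $S$, hence of $S\oplus\Sigma S$, hence of an object of $\D$, so the crux gives $X\in\mathcal{S}$, and symmetrically $Y\in\mathcal{S}$.
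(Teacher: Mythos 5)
Your proof is correct and follows essentially the same route as the paper's: both show that the class of objects $X$ with $X\oplus\Sigma X$ isomorphic to an object of $\D$ is a thick triangulated subcategory containing $\D$, using Proposition~\ref{sumtriangles} (plus transport of cones along isomorphisms) for closure under mapping cones, and the computation of the mapping cone of $0_X\oplus\id_W$ as $X\oplus\Sigma X$ for thickness---the paper applies this last trick directly to the four-fold sum $X\oplus Y\oplus\Sigma X\oplus\Sigma Y$, while you package it as a standalone summand lemma and reuse it, which is a purely organizational difference. One cosmetic point: the suspension of a triangle is a triangle only after a sign change (Proposition~\ref{signs}, Remark~\ref{shiftcone}), which is why the paper's matrix reads $f\oplus(-\Sigma f)$ rather than your $f\oplus\Sigma f$; your argument is unaffected, since $\Sigma C_f$ is still a mapping cone of $\Sigma f$.
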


\begin{proof}
Clearly every such object has to lie in $\overline{\D}$. Hence it remains to show that the class of these objects forms a thick triangulated subcategory. This will be done in two steps:
\begin{compactenum}
\item Show that the class of these objects is a triangulated subcategory. It closed under suspension and desuspension since $\D$ is. Hence by Lemma~\ref{trisubcatcrit}, it remains to check that it is closed under mapping cones. So suppose $X\oplus\Sigma X$ and $Y\oplus\Sigma Y$ are both isomorphic to objects in $\D$, and let $f\in\C(X,Y)$ be a morphism with mapping cone $C_f$. Then by Propositions~\ref{sumtriangles}, there is a triangle
\beq
\xymatrix@+3pc{X\oplus\Sigma X\ar[r]^{\left(\begin{array}{cc}f&0\\0&-\Sigma f\end{array}\right)} & Y\oplus\Sigma Y\ar[r] & C_f\oplus\Sigma C_f\ar[r] & \Sigma(X\oplus\Sigma X)}
\eeq
which shows that $C_f\oplus\Sigma C_f$ also is isomorphic to an object in $\D$ since $\D$ is closed under mapping cones.
\item Checking thickness is just as easy. Suppose that the alleged class contains the object $X\oplus Y$, which means that $X\oplus Y\oplus\Sigma X\oplus\Sigma Y$ is isomorphic to an object in $\D$. Now since $\D$ is triangulated, it also contains a mapping cone of the morphism
\beq
\xymatrix@+9pc{X\oplus Y\oplus\Sigma X\oplus\Sigma Y\ar[r]^{\left(\begin{array}{cccc}0&0&0&0\\0&\id_Y&0&0\\0&0&\id_{\Sigma X}&0\\0&0&0&\id_{\Sigma Y}\end{array}\right)} & X\oplus Y\oplus \Sigma X\oplus\Sigma Y}
\eeq
On the other hand, such a mapping cone is isomorphic to the mapping cone of $X\stackrel{0}{\lra} X$, which is $X\oplus\Sigma X$. Therefore $X$ also lies in the purported class of objects.\qedhere
\end{compactenum}
\end{proof}

\begin{cor}
\label{thickness}
A triangulated subcategory $\D$ is thick if and only if the following condition holds: whenever $\D$ contains an object isomorphic to some $X\oplus \Sigma X$, then $X\in\D$.
\end{cor}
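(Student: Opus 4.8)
The plan is to read off the corollary directly from Proposition~\ref{thickclosure}, which computes the thick closure $\overline{\D}$ explicitly. The key observation is that the condition in the statement---whenever $\D$ contains an object isomorphic to some $X\oplus\Sigma X$, then $X\in\D$---is, in view of that proposition, nothing but the inclusion $\overline{\D}\subseteq\D$. Indeed, Proposition~\ref{thickclosure} identifies the objects of $\overline{\D}$ as exactly those $X$ for which $X\oplus\Sigma X$ is isomorphic to an object of $\D$, so the condition asserts precisely that every object of $\overline{\D}$ already lies in $\D$.

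Granting this reformulation, I would finish as follows. The inclusion $\D\subseteq\overline{\D}$ always holds, so the stated condition is equivalent to the equality $\D=\overline{\D}$. It then remains to observe that $\D=\overline{\D}$ is equivalent to $\D$ being thick: on the one hand $\overline{\D}$ is by construction a thick triangulated subcategory, so $\D=\overline{\D}$ forces $\D$ to be thick; on the other hand, if $\D$ is thick then $\D$ is itself a thick triangulated subcategory containing $\D$, hence the smallest such, so $\D=\overline{\D}$. Chaining these equivalences yields the corollary.

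I do not expect any genuine obstacle here, since all the real work was done in Proposition~\ref{thickclosure}; the only point requiring a little care is the phrase ``isomorphic to an object in $\D$''. This causes no trouble because thick triangulated subcategories are closed under isomorphism of objects (every isomorphism is in particular left invertible), so one may pass freely between ``$X\oplus\Sigma X\in\D$'' and ``$X\oplus\Sigma X$ is isomorphic to an object of $\D$''. As a sanity check on the easy direction, one can also see the forward implication directly: if $\D$ is thick then, by the summand-closure characterization noted after the definition of thickness (which rests on Proposition~\ref{splitbiproduct}), any object isomorphic to $X\oplus\Sigma X$ in $\D$ forces both $X$ and $\Sigma X$ into $\D$, giving the condition.
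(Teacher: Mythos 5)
Your proof is correct and follows essentially the same route as the paper, whose proof is simply ``this follows immediately from the previous proposition'': you have spelled out exactly how the corollary's condition amounts to $\overline{\D}\subseteq\D$, hence to $\D=\overline{\D}$, hence to thickness. Your side remarks (isomorphism-closure of thick subcategories, and the direct check of the forward implication via Proposition~\ref{splitbiproduct}) are accurate and consistent with the paper's own observations.
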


\begin{proof}
This follows immediately from the previous proposition.
\end{proof}

Similarly to forming the thick closure of a triangulated subcategory, we may want to consider the smallest (thick) triangulated subcategory containing a given class of objects in $\C$. Again, it is clear that such a smallest class exists: it is simply the intersection of all such (thick) triangulated subcategories containing the given class of objects. This will be called the (thick) triangulated subcategory \emph{generated} by the given class of objects.

The following result is sometimes useful for showing that an object belongs to a thick triangulated subcategory:

\begin{cor}
\label{thickcrit}
Let $\D\subseteq\C$ be a thick triangulated subcategory and $f\: :\: X\lra Y$ and $g\: :\: Y\lra Z$ a composable pair of morphisms with $Y\in\D$ and $C_{gf}\in\D$. Then also $X,Z\in\D$.
\end{cor}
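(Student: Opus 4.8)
The plan is to prove the apparently weaker statement $Z\in\D$ and then deduce $X\in\D$ for free. Once $Z\in\D$ is known, the mapping cone triangle
$$X\xrightarrow{\,gf\,}Z\lra C_{gf}\lra\Sigma X$$
has two of its objects, $Z$ and $C_{gf}$, in $\D$, so the third object $X$ lies in $\D$ as well by the two-out-of-three principle that every triangulated subcategory satisfies (an immediate consequence of closure under mapping cones and (de)suspension, Lemma~\ref{trisubcatcrit}, together with the rotation Proposition~\ref{shift}). Thus the entire difficulty lies in producing $Z\in\D$, and since two-out-of-three is symmetric in $X$ and $Z$ it can never by itself pin down membership; the decisive extra ingredient has to be \emph{thickness}.

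The key construction is the homotopy pushout $P$ of $Y\xleftarrow{f}X\xrightarrow{gf}Z$, that is, a mapping cone of $\binom{f}{-gf}\colon X\lra Y\oplus Z$, fitting into a triangle
$$X\xrightarrow{\binom{f}{-gf}}Y\oplus Z\xrightarrow{\;q\;}P\lra\Sigma X.$$
First I would show $P\in\D$. For this I identify the cofiber $Q$ of the composite $Y\hookrightarrow Y\oplus Z\xrightarrow{q}P$ with $C_{gf}$. Applying the composition axiom~\ref{T5} to this composite, whose two factors have cofibers $Z$ (the biproduct triangle) and $\Sigma X$ (a rotation of the displayed triangle), produces a triangle $Z\lra Q\lra\Sigma X\xrightarrow{e}\Sigma Z$; rotating it backwards exhibits $Q$ as a cone of $-\Sigma^{-1}e\colon X\lra Z$, and since the octahedral connecting map $e$ equals $\Sigma(gf)$ up to sign one gets $Q\cong C_{gf}$ by uniqueness of mapping cones. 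Hence $Y\lra P\lra C_{gf}\lra\Sigma Y$ is a triangle with $Y,C_{gf}\in\D$, so $P\in\D$ by two-out-of-three.

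Next I would exhibit $Z$ as a direct summand of $P$. The row map $(g,\ \id_Z)\colon Y\oplus Z\lra Z$ annihilates $\binom{f}{-gf}$, because $gf-gf=0$; as $q$ is the mapping cone inclusion of $\binom{f}{-gf}$ it is a weak cokernel (Remark~\ref{weakkernel}), so $(g,\ \id_Z)$ factors as $r\circ q$ for some $r\colon P\lra Z$. Precomposing the induced map $\iota_Z:=q\circ(Z\hookrightarrow Y\oplus Z)\colon Z\lra P$ with $r$ gives $r\circ\iota_Z=(g,\ \id_Z)\circ(Z\hookrightarrow Y\oplus Z)=\id_Z$, so $\iota_Z$ is a split monomorphism. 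Since $P\in\D$ and $\D$ is thick, left-invertibility of $\iota_Z\colon Z\lra P$ forces $Z\in\D$, and then $X\in\D$ by the reduction of the first paragraph.

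The main obstacle is the cobase-change identification $Q\cong C_{gf}$ in the second step: this is precisely what converts the hypothesis $C_{gf}\in\D$ into $P\in\D$, and it is the only place where the octahedral axiom~\ref{T5} (alternatively the $3\!\times\!3$-lemma, Lemma~\ref{3x3lemma}) and the uniqueness of mapping cones really enter. The sign in the connecting map $e$ must be tracked through~\ref{T5}, but it is immaterial by Proposition~\ref{signs}, so it does not affect the conclusion.
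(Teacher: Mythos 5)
Your proof is correct, but it takes a genuinely different route from the paper's, which runs in the opposite order through a different auxiliary object. The paper obtains $X\in\D$ first: it applies~\ref{T5} to the composite $i_g\circ(gf)\colon X\lra C_g$, where $i_g\colon Z\lra C_g$ is the mapping cone inclusion of $g$. The two factors have cones $C_{gf}\in\D$ and $C_{i_g}\cong\Sigma Y\in\D$, so the cone of the composite lies in $\D$; but $i_g\circ gf=(i_gg)f=0$ by Proposition~\ref{vanishcomp}, so that cone is $C_g\oplus\Sigma X$, and thickness splits off $\Sigma X$. Then $Z\in\D$ follows from the triangle over $gf$---your opening reduction with the roles of $X$ and $Z$ exchanged. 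You instead obtain $Z\in\D$ first, from the homotopy pushout $P$: your~\ref{T5} application is the cobase-change identification $C_{Y\to P}\cong C_{gf}$ (your sign-tracking is right, since the connecting map is forced by the commutativity clause of~\ref{T5}, cf.\ the footnote there, and the sign is harmless by Proposition~\ref{signs}), and the weak-cokernel retraction exhibits $Z$ as a direct summand of $P$. The two proofs thus share one skeleton---a single essential~\ref{T5} application, thickness to split off a summand, two-out-of-three for the remaining object---but the paper's zero-composite trick produces its biproduct $C_g\oplus\Sigma X$ for free, whereas you must construct the retraction by hand. You could streamline this: post-composing $\left(\begin{array}{c}f\\-gf\end{array}\right)$ with the invertible matrix $\left(\begin{array}{cc}\id_Y&0\\ g&\id_Z\end{array}\right)$ gives $\left(\begin{array}{c}f\\0\end{array}\right)$, so $P\cong C_f\oplus Z$ by Corollary~\ref{mapconeiso} and Proposition~\ref{sumtriangles}, and the summand form of thickness applies at once. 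What your route buys instead is reusability: the homotopy pushout and its cobase-change property are exactly the tools the paper deploys later in the proof of Proposition~\ref{trisubcatlocal}. One pedantic caveat: two-out-of-three for a plain triangulated subcategory produces the third object only up to isomorphism, since such subcategories need not be isomorphism-closed under the paper's definition; because $\D$ is thick here, all of your uses are legitimate.
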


\begin{proof}
Completing $f$, $g$ and $h:=gf$ to triangles results in the composition axiom diagram
\[
\xymatrix@!{{\bullet}\ar@(ru,lu)[rr]^h\ar[rd]_f & & {\bullet}\ar@(ru,lu)[rr]^{g'}\ar[rd]^{h'} & & {\bullet}\ar@(ru,lu)@{-->}[rr]^{k''}\ar[rd]^{g''} & & {\bullet}\\
& Y \ar[ur]^g\ar[rd]_{f'} & & C_{gf} \ar@{-->}[ru]_{k'}\ar[rd]_{h''} & & \Sigma Y \ar[ru]_{\Sigma f'} &\\
& & {\bullet}\ar@(rd,ld)[rr]_{f''}\ar@{-->}[ru]^{k} & & {\bullet}\ar[ru]_{\Sigma f} & &}
\]
where we have only explicitly labelled those objects that we already know to be in $\D$. Since $C_h\in\D$ by assumption and $C_{g'}\cong\Sigma Y\in\D$ as well, we have $C_{g'h}\in\D$ by another application of~\ref{T5} to the composite $g'h$. On the other hand, we also have $g'h=g'gf=0$, and hence contains the mapping cone of a zero morphism with domain $X$. Since such a mapping cone necessarily contains $X$ as a direct summand, we have $X\in\D$ since $\D$ is thick. Finally, the triangle $(h,h',h'')$ proves that $Z\in\D$ as well.
\end{proof}

\section{Verdier localization}

In some situations, one encounters a triangulated category $\C$ and a class $\W$ of morphisms in $\C$ which one would like to be isomorphisms, although not all of them are. This leads to the following localization problem:

Find a triangulated category $\C[\W^{-1}]^\triangle$ and a triangulated functor $\Loc^\triangle:\C\lra\C[\W^{-1}]^\triangle$ having the following universal property:
\begin{compactenum}
\item $\Loc^\triangle(w)$ is an isomorphism for all $w\in\W$,
\item\label{localizeb} If $F:\C\lra\D$ is any triangulated functor which also maps $\W$ to isomorphisms, then $F$ factors uniquely over $\Loc^\triangle$ as in the diagram
\beq
\xymatrix{{}\C\ar[rr]^{\Loc^\triangle}\ar[rd]_F & & {}\C[\W^{-1}]^\triangle\ar@{-->}[ld]^{!} \\
& {}\D & }
\eeq
\end{compactenum}
As one should expect for a universal property of categories, the diagram in~\ref{localizeb} can only be required to commute up to natural isomorphism, and also the uniqueness of the induced functor can only postulated up to natural isomorphism. This implies that the pair consisting of the triangulated category $\C[\W^{-1}]^\triangle$ and the localization functor $\Loc^\triangle$ is unique up to equivalence of triangulated categories (if it exists).

In such a situation, $\C[\W^{-1}]^\triangle$ is called the \emph{Verdier localization}~\cite{Ver} of $\C$ with respect to $\W$. In general, the Verdier localization $\C[\W^{-1}]^\triangle$ is different from the ordinary localization $\C[\W^{-1}]$ having the same universal property, but with respect to plain categories instead of triangulated categories. For the ordinary localization $\C[\W^{-1}]$, and also for the terminology used in conjunction with localization and categories of fractions such as the axioms (L0), (L1), and so on, we refer to~\cite{Fri1}.

We will now see how the problem of existence of a Verdier localization relates to triangulated subcategories via the correspondence of Corollary~\ref{iso} between isomorphisms and zero objects. So consider some triangulated functor $F:\D\lra\C$ such that $F(w)$ is invertible for some $w\in\W$. By the triangle
\beq
\xymatrix{{\bullet}\ar[r]^{F(w)} & {\bullet}\ar[r] & F(C_w)\ar[r] & {\bullet}}
\eeq
we have that $F(C_w)\cong 0$ for any mapping cone $C_w$ of $w$. Conversely, if $F(C_w)\cong 0$, then $F(w)$ must be an isomorphism. Hence, instead of considering those morphisms which $F$ maps to isomorphisms, we may equivalently consider the class of those objects which $F$ maps to zero objects, which is the kernel of $F$:

\begin{defn}
The \emph{kernel of a triangulated functor} $F:\D\lra\C$ is the thick triangulated subcategory $\mathrm{ker}(F)\subseteq\C$ containing all those objects $X$ for which $F(X)\cong 0$ in $\C$.
\end{defn}

A priori, $\mathrm{ker}(F)$ is only a class of objects in $\C$, but one can easily check that $\mathrm{ker}(F)$ actually is a thick triangulated subcategory. So given $\W$ and assuming that the Verdier localization exists, we know that the kernel of the localization functor $\Loc^\triangle$ contains the thick triangulated subcategory generated by the mapping cones $\{C_w,\,w\in\W\}$. By constructing Verdier localizations as categories of fractions, we will see that any thick triangulated subcategory does indeed occur in this fashion as the kernel of some triangulated localization functor $\Loc^\triangle$ for an appropriate $\W$.

This leads to the observation that there are two ways to specify a Verdier localization: either, specify the class of morphisms $\W$ which shall become isomorphisms; or, specify a triangulated subcategory $\D\subseteq\C$ of objects which shall become zero, and then all the morphisms in
\beq
\Iso(\D):=\{\, f\in\Mor(\C)\:|\:\textrm{There is a triangle}\xymatrix{{\bullet}\ar[r]^f & {\bullet}\ar[r] & C_f\ar[r] & {\bullet}}\textrm{ with } C_f\in\D\,\},
\eeq
become invertible. Since we have already acquired some understanding of triangulated subcategories, we start discussing Verdier localizations with respect to these, and then get back to describing $\C[\W^{-1}]^\triangle$ for arbitary $\W$ afterwards. So from now on, we fix a triangulated subcategory $\D\subseteq\C$ and consider Verdier localization with respect to $\W:=\Iso(\D)$. In the following, morphisms in this localizing class are drawn in diagrams as wiggly arrows. 

\begin{lem}
\label{2of3}
$\Iso(\D)$ has the \emph{2-out-of-3 property}: if $v,w\in\Mor(\C)$ is a composable pair of morphisms, and if two of $v$, $w$, and $wv$ are in $\Iso(\D)$, then so is the third.
\end{lem}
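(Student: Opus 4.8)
The plan is to reduce everything to a single octahedral triangle relating the three mapping cones $C_v$, $C_w$ and $C_{wv}$, and then to run an object-level 2-out-of-3 argument on it. First I would record the clean reformulation of membership in $\Iso(\D)$: by uniqueness of the mapping cone together with~\ref{T3}, a morphism $f$ lies in $\Iso(\D)$ if and only if some---equivalently any---mapping cone of $f$ is isomorphic to an object of $\D$. Indeed, if a mapping cone $C_f$ is isomorphic to some $D\in\D$, then applying~\ref{T3} to the isomorphism of candidate triangles that is the identity in the outer columns and the given isomorphism $C_f\cong D$ in the third column produces a triangle on $f$ whose third vertex is literally $D\in\D$. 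So the question becomes a statement about which of the cones $C_v$, $C_w$, $C_{wv}$ are isomorphic to objects of $\D$.

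Next I would produce the crucial triangle. Applying the composition axiom~\ref{T5} to the composable pair (in the axiom's notation $f:=v$, $g:=w$, $h:=wv$) and completing $v$, $w$ and $wv$ to mapping cone triangles yields, as the output triangle $(k,k',k'')$ of the octahedron, a triangle
\[
C_v \to C_{wv} \to C_w \to \Sigma C_v .
\]
This is exactly the strand appearing in the braid diagram and already exploited in Corollary~\ref{thickcrit}; it is the only place where the full strength of the axioms enters.

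The remaining step is a purely formal object-level 2-out-of-3 property: in any triangle $A\to B\to C\to\Sigma A$, if two of $A$, $B$, $C$ are isomorphic to objects of $\D$, then so is the third. To see this I would use Proposition~\ref{shift} to rotate the triangle so that the unknown object occupies the third (mapping-cone) vertex of a morphism between the two known objects: $C$ is already a cone of $A\to B$; rotating forward exhibits $\Sigma A$ as a cone of $B\to C$; and rotating backward exhibits $B$ as a cone of $\Sigma^{-1}C\to A$. Since $\D$ is closed under (de)suspension, the two known vertices of the chosen rotation are again isomorphic to objects of $\D$; then Corollary~\ref{mapconeiso} lets me replace that morphism by one literally between two objects of $\D$, and Lemma~\ref{trisubcatcrit} guarantees $\D$ contains a cone of it, so the unknown vertex is isomorphic to an object of $\D$. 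Applying this to the octahedral triangle above and translating back through the first paragraph finishes the proof. The only subtlety to watch—indeed the main bookkeeping obstacle—is the distinction between a cone \emph{being} an object of $\D$ and being merely \emph{isomorphic} to one, which is precisely why I phrase $\Iso(\D)$ via the isomorphism-invariant characterization and rely on uniqueness of cones, Corollary~\ref{mapconeiso} and~\ref{T3} rather than on any repleteness of $\D$, which is not assumed.
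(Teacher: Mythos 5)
Your proposal is correct and takes essentially the same route as the paper: the core move in both is the single application of~\ref{T5} to the composition $wv$, producing the triangle $C_v\to C_{wv}\to C_w\to\Sigma C_v$, after which one concludes from the closure properties of $\D$. The paper compresses the final step into one clause (``which shows that if $\D$ contains objects isomorphic to two of these, then it also contains an object isomorphic to the third''); your rotation argument via Proposition~\ref{shift}, Corollary~\ref{mapconeiso} and Lemma~\ref{trisubcatcrit}, together with the isomorphism-invariant reformulation of membership in $\Iso(\D)$, merely supplies in full the details that the paper leaves implicit.
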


\begin{proof}
All three parts of this statement follow from~\ref{T5} and the assumption that $\mathcal{D}$ is a triangulated subcategory: the triangle that one obtains from applying~\ref{T5} to the composition $wv$ yields a triangle on the objects $C_v$, $C_{wv}$ and $C_w$, which shows that if $\D$ contains objects isomorphic to two of these, then it also contains an object isomorphic to the third.
\end{proof}

\begin{prop}
\label{trisubcatlocal}
If $\D$ is a triangulated subcategory of $\C$, then $\C$ allows a calculus of left and right fractions with respect to $\Iso(\D)$.
\end{prop}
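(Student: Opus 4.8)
The plan is to check directly that $\W=\Iso(\D)$ is a multiplicative system satisfying the Ore and cancellation conditions \emph{on both sides}, which is exactly what a calculus of left and right fractions requires. The key simplification I would exploit at the outset is that $\W$ is \emph{self-dual}: since $\D$ is closed under (de)suspension, a morphism lies in $\Iso(\D)$ precisely when its mapping cone (computed in $\C$) lies in $\D$, and passing to $\C^{\mathrm{op}}$ merely replaces this cone by a desuspension of it, which lies in $\D$ iff the original does. Hence, invoking Remark~\ref{dual}, it suffices to establish the right-handed versions; the left-handed ones then follow by applying the same result to $\C^{\mathrm{op}}$. That $\W$ is a multiplicative system is immediate: $\id_X$ has mapping cone $0\in\D$ by~\ref{T1}, and closure under composition is a special case of the 2-out-of-3 property (Lemma~\ref{2of3}).

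For the right Ore condition I would start from a cospan $Z\xrightarrow{f}Y\xleftarrow{t}W$ with $t\in\W$ and realize the homotopy pullback as a disguised weak kernel. Fix a triangle $W\xrightarrow{t}Y\xrightarrow{i_t}C_t\xrightarrow{p_t}\Sigma W$, so $C_t\in\D$, and complete the composite $i_tf\colon Z\to C_t$ to a triangle; after one backward rotation (Proposition~\ref{shift}) it takes the form
\[
U\xrightarrow{\,t'\,}Z\xrightarrow{\,i_tf\,}C_t\longrightarrow\Sigma U .
\]
The crucial point is that the third object $C_t$ is then a mapping cone of $t'$, so $C_{t'}\cong C_t\in\D$ and therefore $t'\in\W$. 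Moreover $i_t(ft')=0$ by Proposition~\ref{vanishcomp}, and since the first morphism of a triangle is a weak kernel of the second (Proposition~\ref{homhom}), the morphism $ft'$ factors as $ft'=tf'$ for some $f'\colon U\to W$. This is precisely the required commuting square whose parallel leg $t'$ again lies in $\W$.

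The right cancellation condition is proved by the same device. Given a parallel pair $f,g\colon X\to Y$ and $w\colon Y\to Y'$ in $\W$ with $wf=wg$, set $h:=f-g$, so $wh=0$. Fixing a triangle $Y\xrightarrow{w}Y'\xrightarrow{i_w}C_w\xrightarrow{p_w}\Sigma Y$ with $C_w\in\D$, the weak kernel property (Remark~\ref{weakkernel}) lets me factor $h=(\Sigma^{-1}p_w)\,e$ through some $e\colon X\to\Sigma^{-1}C_w$. Completing $e$ to a triangle and rotating backward gives
\[
\Sigma^{-1}C_e\xrightarrow{\,w'\,}X\xrightarrow{\,e\,}\Sigma^{-1}C_w\longrightarrow C_e ,
\]
whose third object shows $C_{w'}\cong\Sigma^{-1}C_w\in\D$ (using that $\D$ is closed under desuspension), so $w'\in\W$; and $ew'=0$ by Proposition~\ref{vanishcomp} forces $hw'=0$, i.e.\ $fw'=gw'$. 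By the self-duality reduction, the left Ore and left cancellation conditions follow at once, completing the verification.

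The main obstacle---indeed essentially the only non-formal ingredient---is the recurring need to guarantee that each completing morphism produced actually lands back in $\W$. The trick that makes everything go through is to manufacture these morphisms as (desuspended, rotated) mapping-cone projections, so that their own cones are \emph{forced} by the ambient triangle to equal $C_t$ (respectively $\Sigma^{-1}C_w$), which belong to $\D$ because $\D$ is a triangulated subcategory closed under desuspension. Everything else is bookkeeping with weak (co)kernels and the duality reduction; the only thing one must be careful about is the precise signs and rotations supplied by Proposition~\ref{shift}, but no idea beyond weak kernels and the $3\!\!\times\!\!3$-machinery (Lemma~\ref{3x3lemma}) is needed.
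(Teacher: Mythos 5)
Your proposal is correct, but it takes a genuinely different route through the key constructions than the paper does. The skeleton is shared: both reduce to one side by the self-duality of $\Iso(\D)$ (the paper via Remark~\ref{dual}), and both get the multiplicative-system part from~\ref{T1} and Lemma~\ref{2of3}. The divergence is in the Ore and cancellation steps. For the Ore condition, the paper builds the homotopy pushout by completing $\bigl(\begin{smallmatrix}w\\ f\end{smallmatrix}\bigr):X\lra X'\oplus Y$ to a triangle---commutativity of the square then falls out of Proposition~\ref{vanishcomp}---but it must invoke the composition axiom~\ref{T5} to verify that the new morphism $w'$ lies in $\Iso(\D)$. You instead take the cone of the composite $i_t f$ and rotate backwards, so that the cone of your new morphism $t'$ is \emph{literally} $C_t\in\D$ by construction, eliminating~\ref{T5} at the cost of obtaining the square from a (non-canonical) weak-kernel lift rather than from an explicit triangle. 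For cancellation the difference is sharper: the paper verifies only the weaker axiom (L2'') of~\cite{Fri1} (finding $u$ with $uf=0$ where $u$ merely lies in the subcategory generated by $\Iso(\D)$ and split monomorphisms, via another~\ref{T5} application and the factorization $i_f=vi_Y$), whereas you prove the standard Gabriel--Zisman cancellation condition outright---$wf=wg$ implies $fw'=gw'$ with $w'\in\Iso(\D)$---again by weak kernels and rotation, which is a stronger conclusion. So your argument is leaner (no use of~\ref{T5} anywhere) and delivers the classical fraction-calculus axioms directly, while the paper's argument exhibits the homotopy pushout explicitly, a construction it wants on record for conceptual reasons. One small inaccuracy in your summary: you cite the $3\!\times\!3$-lemma (Lemma~\ref{3x3lemma}) as part of the needed machinery, but your proof never actually uses it.
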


See~\cite{Fri1} for detailed explanations of the categories of fractions terminology and axioms used in the proof.

\begin{proof} It is sufficient to show this for the calculus of left fractions; the other part then follows by duality. 

Axiom (L0) states that $\Iso(D)$ is closed under composition and contains all identities. While the first is part of Lemma~\ref{2of3}, the second is trivial by~\ref{T1}. Concerning (L1), we have to start with a diagram
\beq
\xymatrix{X\ar[r]^f\ar@{~>}[d]_w & Y\\
X' &}
\eeq
with $w\in\Iso(D)$, and show that it can completed to a commutative square with the right vertical morphism also in $\Iso(D)$. By means of~\ref{T2}, choose any triangle going over $X'\oplus Y$ via $w$ and $f$ as in
\beq
\xymatrix@+10pt{X\ar[rr]^{\left(\begin{array}{c}w\\f\end{array}\right)} && X'\oplus Y\ar[rr]^{\left(\begin{array}{cc}-f'&w'\end{array}\right)} && Y'\ar[rr] && {}\Sigma X}
\eeq
and define $Y'$, $f'$ and $w'$ as indicated by the labels. Then $-f'w+w'f=0$ by Proposition~\ref{vanishcomp}, so that
\beq
\xymatrix{X\ar[r]^f\ar@{~>}[d]_w & Y\ar@{~>}[d]_{w'}\\
X'\ar[r]^{f'} & Y'}
\eeq
commutes. In the literature, this construction is known as ``taking the homotopy pushout'' and is a special case of a general notion of ``homotopy colimit'' smilar to Remark~\ref{weakkernel}. 
It still needs to be shown that $w'\in\Iso(\D)$. For this, consider the composition axiom diagram
\beq
\xymatrix@!@-23pt{X\ar@(ru,lu)@{~>}[rr]^{\begin{array}{c}w\end{array}}\ar[rd]_(.4){\left(\begin{array}{c}w\\f\end{array}\right)} && X'\ar@(ru,lu)[rr]^{\begin{array}{c}0\end{array}}\ar[rd] && {}\Sigma Y\ar@(ru,lu)@{~>}[rr]^{\begin{array}{c}\Sigma w'\end{array}}\ar@{^{(}->}[rd]^{\begin{array}{c}i_{\Sigma Y}\end{array}} && {}\Sigma Y'\\
&X'\oplus Y\ar@{->>}[ru]^{\begin{array}{c}p_{X'}\end{array}}\ar[rd]_{\left(\begin{array}{cc}-f'&w'\end{array}\right)} && C_w\ar[rd]\ar[ru]&&{}\Sigma(X'\oplus Y)\ar[ru]_{\quad\left(\begin{array}{cc}-\Sigma f'&{}\Sigma w'\end{array}\right)} &\\
&& Y'\ar@(rd,ld)[rr]\ar[ru] && {}\Sigma X\ar[ru]_{\left(\begin{array}{c}\Sigma w\\{}\Sigma f\end{array}\right)}&&}
\eeq\\

\noindent which shows that $C_{\Sigma w'}\cong\Sigma C_w$. Hence we get $C_{w'}\cong C_w$ by Remark~\ref{shiftcone}, and consequently $w'\in\Iso(\D)$.

For Axiom (L2''), we need to assume that we are given a diagram
\beq
\xymatrix{X\ar@{~>}[r]^w & X'\ar[r]^f & Y}
\eeq
with $w\in\Iso(\D)$ such that $fw=0$ holds, and we need to find a morphism $u$ with $uf=0$ such that $u$ lies in the subcategory generated by $\Iso(\D)$ and all left invertible morphisms. To see this, apply the composition axiom to the equation $fw=0$ in order to obtain a morphism $v$ as in
\beq
\xymatrix@!@-23pt{X\ar@(ru,lu)[rr]^0\ar@{~>}[rd]^w && Y\ar@(ru,lu)[rr]^{i_f}\ar[rd]^{i_Y} && C_f\ar@(ru,lu)[rr]\ar[rd] && {}\Sigma C_w\\
& X'\ar[rd]\ar[ru]^f && \Sigma X\oplus Y\ar[rd]\ar@{~>}[ru]^{v} && \Sigma X'\ar[ru] &\\
&& C_w\ar@(rd,ld)[rr]\ar[ru] && \Sigma X\ar[ru]&&}
\eeq
\\

Since $C_v\cong \Sigma C_w$, we have $v\in\Iso(\D)$. Moreover, we have $i_f f = 0$. In order for this to be the data required by Axiom (L2''), it remains to be shown that $i_f$ lies in the subcategory generated by $\Iso(\D)$ and all left invertible morphisms. But this is the case because $i_f=v i_Y$, and $v\in\Iso(\D)$, and the biproduct inclusion $i_Y$ has a left inverse given by the corresponding biproduct projection.
\end{proof}

In the resulting category of fractions, it may happen that the class of zero objects is strictly larger than the class of objects of $\D$. For example, $\C$ may contain objects isomorphic to objects in $\D$ which are not themselves contained in $\D$. As a less trivial example, take $\C$ to be the category of finite-dimensional vector spaces over a field $K$ with the triangulation as described in Example~\ref{semisimpleab}. Let $\D$ be the triangulated subcategory consisting of all even-dimensional vector spaces. For any linear map between even-dimensional vector spaces, the sum of the dimensions of the kernel and of the cokernel is also even, which implies that $\D$ is indeed closed under mapping cones. Now consider the triangle
\beq
\xymatrix{K\ar[rr]^{\begin{array}{c}0\end{array}} && K\ar[rr]^{\left(\begin{array}{c}1\\0\end{array}\right)} && K\oplus K\ar[rr]^(.55){\left(\begin{array}{cc}0&1\end{array}\right)} && K}
\eeq
This triangle expresses the fact that $0:K\lra K$ is in $\Iso(\D)$, so that $K$ becomes zero in the localization. In fact, this shows that $\C[\Iso(\D)^{-1}]$ is trivial in the sense that \emph{all} objects are zero, although $\D$ was a proper subcategory. As one might already be able to guess, the crux of the matter is that this $\D$ is not thick, since it contains $K\oplus K$, but not $K$.

In general, whenever $Y\in\D$ and there is a morphism $f:X\lra Y$ with left-inverse $g$, then $X$ will also be trivial in the localization, since there we have that $\mathrm{id_X}=gf=g\circ\id_Y\circ f=g\circ 0\circ f=0$. We will see in Proposition~\ref{verdierkernel} below that this is all that can happen.

We also write $\C/\D$ for the resulting category of fractions $\C[\Iso(\D)^{-1}]$, and $\Loc^\triangle \: : \: \C\lra \C/\D$ for the localization functor. Before showing that $\C/\D$ inherits a triangulation from $\C$, we need a bit more preparation, using Proposition~\ref{trisubcatlocal}.

\begin{lem}
\label{fraceq}
For $f,g\in\C(X,Y)$, we have $\Loc(f)=\Loc(g)$ in $\C/\D$ if and only if there exists $w\in\Iso(\D)$ such that $wf=wg$ in $\C$. Or, equivalently, if and only if there exists $v\in\Iso(\D)$ such that $fv=gv$.
\end{lem}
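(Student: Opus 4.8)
The plan is to read off both characterizations directly from the explicit description of morphisms in a category of fractions, which is available to us because Proposition~\ref{trisubcatlocal} provides a calculus of \emph{both} left and right fractions for $\Iso(\D)$. Recall from~\cite{Fri1} that, under a calculus of left fractions, every morphism $X\to Y$ in $\C/\D$ is represented by a \emph{left roof}, namely a pair $a\colon X\lra Z$ and $s\colon Y\lra Z$ with $s\in\Iso(\D)$, standing for the formal composite $s^{-1}a$; and that two left roofs $(a,Z,s)$ and $(a',Z',s')$ represent the same morphism if and only if they admit a \emph{common amplification}, i.e.\ morphisms $u\colon Z\lra Z''$ and $u'\colon Z'\lra Z''$ with $ua=u'a'$, $us=u's'$, and $us\in\Iso(\D)$. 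That the naive equivalence relation on roofs collapses to this simple form is precisely the content of the calculus of fractions, and it is the one nontrivial input I would invoke; everything else is a specialization to roofs whose denominators are identities.

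With this description in hand, the ``only if'' direction is immediate. The localization functor sends $f$ and $g$ to the trivial left roofs $(f,Y,\id_Y)$ and $(g,Y,\id_Y)$, both with denominator $\id_Y$. A common amplification of these then consists of $u,u'\colon Y\lra Z''$ with $uf=u'g$, $u\,\id_Y=u'\,\id_Y$, and $u\,\id_Y\in\Iso(\D)$; the second equation forces $u=u'=:w$, and we are left with $w\in\Iso(\D)$ satisfying $wf=wg$, as desired. Conversely, if $wf=wg$ for some $w\in\Iso(\D)$, then $u=u'=w$ is a common amplification of the same two trivial roofs, so that $\Loc(f)=\Loc(g)$; equivalently, one simply notes that $\Loc(w)$ is an isomorphism and cancels it in $\Loc(w)\Loc(f)=\Loc(wf)=\Loc(wg)=\Loc(w)\Loc(g)$.

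Finally, the ``equivalently'' clause is obtained by rerunning the same argument through the calculus of \emph{right} fractions, also supplied by Proposition~\ref{trisubcatlocal}. There a morphism $X\to Y$ is a \emph{right roof} given by $t\colon Z\lra X$ in $\Iso(\D)$ and $a\colon Z\lra Y$, representing $a\,t^{-1}$, and amplifications are taken on the domain side. The trivial right roofs for $f$ and $g$ now share the denominator $\id_X$, and a common amplification produces a single morphism $v\in\Iso(\D)$ with $fv=gv$. Since the left and right calculi compute the same localization $\C/\D$, each of the conditions $wf=wg$ and $fv=gv$ is thus equivalent to $\Loc(f)=\Loc(g)$. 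The only point requiring care is to keep the left/right conventions straight so that the denominators land on the correct side; granted the roof description, the computation itself is trivial.
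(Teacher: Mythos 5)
Your proof is correct and follows essentially the same route as the paper: both arguments specialize the common-amplification criterion for equality of left roofs to the trivial roofs $(f,\id_Y)$ and $(g,\id_Y)$, observe that the shared identity denominator forces the two amplifying morphisms to coincide and lie in $\Iso(\D)$, and handle the second characterization via the right-fraction (equivalently, dual) calculus supplied by Proposition~\ref{trisubcatlocal}.
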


As the following proof shows, the first statement actually holds in any category of left fractions, while the second holds in any category of right fractions.

\begin{proof}
By duality, it is enough to prove the first statement.

By definition of the category of fractions, the roofs $(f,\id_Y)$ and $(g,\id_Y)$ (or formal fractions $\id_Y^{-1}\circ f$ and $\id_Y^{-1}\circ g$) represent the same morphism in $\C/\D$ if and only if there exist $h,k\in\Mor(\C)$ fitting into a diagram
\[
	\xymatrix{ & & \bullet \\
		& \bullet \ar[ur]^h & & \bullet \ar[ul]_(.4)k \\
		\bullet \ar[ur]^f \ar[urrr]_g & & & & \bullet \ar@{=}[ul] \ar@{=}[ulll] \ar@{~>}@/_2pc/[uull] }
\]
where commutativity implies that $h=k\in\Iso(\D)$. This is exactly the condition claimed.
\end{proof}

The following criterion for becoming an isomorphism in $\C/\D$ is reminiscent of the \emph{2-out-of-6 property}~\cite{DHK}.

\begin{lem}
\label{isocrit1}
For $f\in\Mor(\C)$, $\Loc(f)$ is an isomorphism in $\C/\D$ if and only if there exist $g,h\in\Mor(\C)$ such that $gf\in\Iso(\D)$ and $fh\in\Iso(\D)$.
\end{lem}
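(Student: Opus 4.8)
The plan is to prove the two implications separately. The backward direction is a purely formal consequence of one-sided invertibility, while the forward direction uses the calculus of left and right fractions furnished by Proposition~\ref{trisubcatlocal} together with the comparison criterion of Lemma~\ref{fraceq}.

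For the backward direction, suppose $g$ and $h$ are given with $gf\in\Iso(\D)$ and $fh\in\Iso(\D)$. Since $\Loc$ inverts every morphism in $\Iso(\D)$, both $\Loc(gf)=\Loc(g)\Loc(f)$ and $\Loc(fh)=\Loc(f)\Loc(h)$ are isomorphisms in $\C/\D$. The first exhibits $\Loc(gf)^{-1}\Loc(g)$ as a left inverse of $\Loc(f)$ and the second exhibits $\Loc(h)\Loc(fh)^{-1}$ as a right inverse; a morphism possessing both a left and a right inverse is an isomorphism, so $\Loc(f)$ is invertible.

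For the forward direction, assume $\Loc(f)$ is an isomorphism and set $\phi:=\Loc(f)^{-1}$. Because $\C$ admits a calculus of left fractions with respect to $\Iso(\D)$, I would represent $\phi$ as a left roof, writing $\phi=\Loc(w)^{-1}\Loc(s)$ for some $s\colon Y\to W$ in $\C$ and some $w\colon X\to W$ lying in $\Iso(\D)$. The identity $\phi\,\Loc(f)=\id$ then rearranges, after cancelling $\Loc(w)^{-1}$, into the equation $\Loc(sf)=\Loc(w)$ between two morphisms $X\to W$. Applying the post-composition form of Lemma~\ref{fraceq} produces some $v\in\Iso(\D)$ with $v\,sf=v\,w$ in $\C$; since $\Iso(\D)$ is closed under composition by Lemma~\ref{2of3}, the composite $v\,w$ lies in $\Iso(\D)$, and hence so does $(v\,s)f$. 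Thus $g:=v\,s$ satisfies $gf\in\Iso(\D)$, as required.

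To obtain $h$ with $fh\in\Iso(\D)$, I would invoke duality. Passing to $\C^{\mathrm{op}}$ (triangulated via $\Sigma^{-1}$) makes $\D$ a triangulated subcategory again, identifies $\C^{\mathrm{op}}/\D$ with $(\C/\D)^{\mathrm{op}}$, and interchanges the two existence conditions, so the construction just carried out for $g$ yields $h$ back in $\C$. Equivalently, one may rerun the computation using a right-fraction representation of $\phi$ together with the relation $\Loc(f)\,\phi=\id$, now appealing to the pre-composition form of Lemma~\ref{fraceq}. I expect the main obstacle to be purely bookkeeping: one must pair the side on which $\Loc(f)$ is cancelled with the correct (left versus right) fraction representation of $\phi$, so that the formal inverse $\Loc(w)^{-1}$ drops out and leaves an equation between genuine morphisms of $\C$ to which Lemma~\ref{fraceq} can be applied. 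Once the right roof is chosen, everything else is formal.
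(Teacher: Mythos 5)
Your proof is correct and follows essentially the same route as the paper: represent $\Loc(f)^{-1}$ as a left fraction to produce $g$ and as a right fraction (equivalently, by duality) to produce $h$, then lift the resulting equations $\Loc(gf)=\Loc(v)$ and $\Loc(fh)=\Loc(w)$ to $\C$ via Lemma~\ref{fraceq} and absorb the correcting element of $\Iso(\D)$. The only cosmetic difference is in the backward direction, where you give the elementary left-inverse/right-inverse argument directly, whereas the paper cites the 2-out-of-6 property of isomorphisms; your version is self-contained and equally valid.
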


As the following proof shows, this actually holds in any category of (left and right) fractions.

\begin{proof}
We start with the ``if'' direction. If $gf\in\Iso(\D)$ and $fh\in\Iso(\D)$, then this means that both $\Loc(g)\Loc(f)$ and $\Loc(f)\Loc(h)$ are isomorphisms. Since the class of isomorphisms has the 2-out-of-6 property~\cite{DHK}, it follows that $\Loc(f)$ is an isomorphism as well.

Conversely, suppose that $\Loc(f)$ is an isomorphism. Then $\Loc(f)$ has an inverse in $\C/\D$, which we can write either as a left fraction $\Loc(v)^{-1}\Loc(g)$ or as a right fraction $\Loc(h)\Loc(w)^{-1}$ with $v,w\in\Iso(\D)$. By virtue of being an inverse, we have that
\[
\Loc(v)^{-1}\Loc(g)\Loc(f) = \id,\qquad \Loc(f)\Loc(h)\Loc(w)^{-1} = \id,
\]
which we can rewrite as
\[
\Loc(gf) = \Loc(v),\qquad \Loc(fh) = \Loc(w).
\]
By Lemma~\ref{fraceq}, we can lift this to an equation in $\C$ by post- or precomposing with a suitable element of $\Iso(\D)$. By absorbing the new element of $\Iso(\D)$ into $g$ and $v$ or $h$ and $w$, respectively, we obtain equations
\[
gf = v,\qquad fh = w,
\]
as has been claimed.
\end{proof}

We can now prove the previously indicated relationship to thick triangulated subcategories:

\begin{prop}
\label{isocrit2}
For any $f\in\Mor(\C)$, $\Loc(f)$ is an isomorphism in $\C/\D$ if and only if $f\in\Iso(\overline{\D})$.
\end{prop}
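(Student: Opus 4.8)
The plan is to route everything through one object-level principle: for every object $Z \in \C$ one has $Z \in \overline{\D}$ if and only if $\Loc(Z) \cong 0$ in $\C/\D$. I would prove this first. If $Z \in \overline{\D}$ then $\Sigma Z \oplus \Sigma^2 Z \in \D$ by Proposition~\ref{thickclosure}, so the zero morphism $Z \to \Sigma^2 Z$ has mapping cone $\Sigma^2 Z \oplus \Sigma Z \in \D$ and hence lies in $\Iso(\D)$; by Lemma~\ref{fraceq} this forces $\id_Z = 0$ in $\C/\D$, i.e. $\Loc(Z) \cong 0$. Conversely, $\Loc(Z) \cong 0$ means $\Loc(\id_Z) = \Loc(0)$, so Lemma~\ref{fraceq} produces $w \in \Iso(\D)$ with $w = w \circ \id_Z = 0$; thus a zero morphism $Z \to Z'$ lies in $\Iso(\D)$, its cone $Z' \oplus \Sigma Z$ is in $\D$, and since $\overline{\D}$ is thick (hence closed under direct summands) we get $\Sigma Z \in \overline{\D}$ and therefore $Z \in \overline{\D}$.

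The ``if'' direction is then quick. Suppose $C_f \in \overline{\D}$, so $C_f \oplus \Sigma C_f$ is isomorphic to an object of $\D$ by Proposition~\ref{thickclosure}. Combining Proposition~\ref{sumtriangles}, Remark~\ref{shiftcone} and Proposition~\ref{signs}, the morphism $f \oplus (-\Sigma f) \colon X \oplus \Sigma X \to Y \oplus \Sigma Y$ has mapping cone isomorphic to $C_f \oplus \Sigma C_f$, hence lies in $\Iso(\D)$ and becomes an isomorphism under $\Loc$. As $\Loc$ is additive (a calculus of fractions exists by Proposition~\ref{trisubcatlocal}), this localized morphism is block-diagonal with diagonal entries $\Loc(f)$ and $-\Loc(\Sigma f)$; a block-diagonal morphism is invertible only if each block is, so $\Loc(f)$ is an isomorphism.

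The ``only if'' direction is where the work lies. By the object principle it suffices to show $\Loc(C_f) \cong 0$. Lemma~\ref{isocrit1} furnishes $g,h$ with $gf, fh \in \Iso(\D)$, i.e. $C_{gf}, C_{fh} \in \D$. Applying the composition axiom~\ref{T5} to $gf$ gives a triangle $C_f \to C_{gf} \to C_g \to \Sigma C_f$, and to $fh$ a triangle $C_h \to C_{fh} \to C_f \to \Sigma C_h$; rotating the latter, the morphism $\partial \colon C_f \to \Sigma C_h$ has cone $\Sigma C_{fh} \in \D$. Forming the homotopy pushout of $C_{gf} \leftarrow C_f \xrightarrow{\partial} \Sigma C_h$ as in Proposition~\ref{trisubcatlocal}, the octahedron identifies the cone of $C_f \to C_{gf} \oplus \Sigma C_h$ with an extension of $\Sigma C_{fh}$ by $C_{gf}$, which lies in $\D$; hence $C_f \to C_{gf} \oplus \Sigma C_h$ is in $\Iso(\D)$ and $\Loc(C_f) \cong \Loc(\Sigma C_h)$.

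The main obstacle is precisely to improve this to $\Loc(C_f) \cong 0$. A single pair $(g,h)$ yields only relations up to suspension: one similarly obtains $\Loc(\Sigma C_f) \cong \Loc(C_g)$ and, via the triple $gfh$, $\Loc(C_g) \cong \Loc(C_h)$, so that all one can extract is the periodicity $\Loc(C_f) \cong \Loc(\Sigma^2 C_f)$, which does not by itself give vanishing. Breaking this symmetry is the crux. The clean resolution is to use that $\Sigma$ descends to an automorphism of $\C/\D$ and that $\Loc$ is a triangulated functor onto the quotient, so that $\Loc$ carries the mapping cone triangle of $f$ to a triangle exhibiting $\Loc(C_f)$ as a mapping cone of the isomorphism $\Loc(f)$; by Corollary~\ref{iso} such a cone is a zero object, and the object principle then gives $C_f \in \overline{\D}$. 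The load-bearing input is thus the triangulated structure on $\C/\D$ making $\Loc$ triangulated; absent that, one must instead split $C_f$ off an object of $\D$ directly, which is exactly the content encoded by the $X \oplus \Sigma X$ form of Proposition~\ref{thickclosure}.
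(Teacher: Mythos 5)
Your preliminary ``object principle'' and your ``if'' direction are both correct. The object principle is a self-contained proof of what the paper only records afterwards as Proposition~\ref{verdierkernel} (there it is deduced \emph{from} Proposition~\ref{isocrit2}, so your direct argument via Lemma~\ref{fraceq} is a nice shortcut). Your ``if'' argument runs parallel to the paper's own: where you sum $f$ with $-\Sigma f$ and use invertibility of block-diagonal matrices (which needs additivity of $\Loc$, a standard fact about categories of fractions that the paper leaves implicit), the paper instead pads $f$ by zero morphisms into a row matrix and a column matrix lying in $\Iso(\D)$ and concludes with the 2-out-of-6 property of isomorphisms.

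The genuine gap is in the ``only if'' direction, exactly at the step you yourself flag as the crux. Your resolution---that $\C/\D$ is triangulated and $\Loc$ is a triangulated functor, so that $\Loc(C_f)$ is a cone of the isomorphism $\Loc(f)$ and hence vanishes by Corollary~\ref{iso} and uniqueness of cones---is circular within this paper: those facts are Theorem~\ref{loc}, which comes \emph{after} Proposition~\ref{isocrit2} and whose proof cites it. Concretely, the verification of uniqueness of mapping cones in $\C/\D$ (which is precisely what your appeal to Corollary~\ref{iso} requires, and is also an ingredient in verifying~\ref{T5} for $\C/\D$) invokes Proposition~\ref{isocrit2} to conclude that the comparison morphisms $k,l$ there lie in $\Iso(\overline{\D})$, so that the filling morphisms $r,t$ can be chosen in $\Iso(\overline{\D})$. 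One could reorganize the theory so that the quotient triangulation is established first (this is essentially Neeman's order of development), but that demands a different and harder proof of Theorem~\ref{loc}; it cannot simply be cited here.

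The non-circular way to break the periodicity $\Loc(C_f)\cong\Loc(\Sigma^2 C_f)$ that you correctly ran into is to stay entirely inside $\C$ and use thickness instead of the quotient's triangulation. This is what the paper does: with $g,h$ from Lemma~\ref{isocrit1}, apply Proposition~\ref{triplecomp} to the composable triple $(h,f,g)$, obtaining a triangle $C_h\to C_{gfh}\to C_{gf}\to\Sigma C_h$ whose first morphism factors through $C_{fh}$; since $C_{fh}$ and $C_{gf}$ are isomorphic to objects of $\D$, Corollary~\ref{thickcrit}, applied to the thick subcategory $\overline{\D}$, yields $C_h\in\overline{\D}$, and then the octahedron for the composite $fh$ (or Lemma~\ref{2of3} applied to $\overline{\D}$) yields $C_f\in\overline{\D}$. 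Note that Corollary~\ref{thickcrit} is exactly the ``split it off an object of $\D$ directly'' mechanism you allude to in your last sentence but do not carry out: its proof realizes the object in question as a direct summand of the mapping cone of a zero morphism and then invokes thickness.
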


\begin{proof}
We start with the ``if'' direction. The assumption $f\in\Iso(\overline{\D})$ for $f\: : X\lra Y$ means that $C_f$ is a direct summand of an object isomorphic to an object in $\D$. So let $Z\in\C$ be an object which makes $C_f\oplus Z$ isomorphic to an object in $\D$. Then consider the diagram of triangles
\[
\xymatrix@C+33pt@R+15pt{ X\oplus \Sigma^{-1}Z \ar[r]^{\left(\begin{array}{cc}f & 0\end{array}\right)} \ar[d]^{p_X} & Y \ar[r] \ar@{=}[d] & C_f\oplus Z \ar[d]^{p_{C_f}} \ar[r] & \Sigma X\oplus Z \ar[d]^{p_{\Sigma X}} \\
	X \ar[r]^f \ar@{=}[d] & Y \ar[r] \ar[d]^{i_Y} & C_f \ar[r] \ar[d]^{i_{C_f}} & \Sigma X \ar@{=}[d] \\
	X \ar[r]^{\left(\begin{array}{c}f\\ 0\end{array}\right)} & Y\oplus Z \ar[r] & C_f \oplus Z \ar[r] & \Sigma X }
\]
where $p_\ast$ and $i_\ast$ stand for the corresponding biproduct projections and inclusions.

Now since $C_f\oplus Z\in\D$, we know that both $\Loc\left(\left(\begin{array}{cc}f & 0\end{array}\right)\right)$ and $\Loc\left(\left(\begin{array}{c}f\\ 0\end{array}\right)\right)$ are isomorphisms. The two squares on the left therefore show that $\Loc(f)$ can be pre-composed with another morphism in $\C/\D$ such as to yield an isomorphism, and can also be post-composed with another morphism in $\C/\D$ such as to yield an isomorphism. By the 2-out-of-6 property of isomorphisms, it follows that $\Loc(f)$ is itself an isomorphism in $\C/\D$.

Conversely, suppose that $\Loc(f)$ is an isomorphism. Then we have $g,h\in\Mor(\C)$ as in Lemma~\ref{isocrit1}. Proposition~\ref{triplecomp} yields a diagram of the form
\[
\xymatrix{ & C_{fh} \ar[dr] \\
C_{h} \ar[ur] \ar[rr] & & C_{gfh} \ar[r] & C_{gf} \ar[r] & \Sigma C_h }
\]
in which $C_{gf}$ and $C_{fh}$ are isomorphic to objects in $\D$ by assumption. The claim now follows from Corollary~\ref{thickcrit}.
\end{proof}

We can now prove the main result of this section, stating that $\C/\D$ is indeed the Verdier localization of $\C$ with respect to $\W=\Iso(\D)$:

\begin{thm}
\label{loc}
$\C/\D$ is triangulated by taking a candidate triangle to be a triangle if and only if it is isomorphic to the image of a triangle under $\Loc^\triangle$. Furthermore, $\Loc^\triangle:\C\lra\C/\D$ is the universal triangulated functor mapping all objects of $\D$ to zero objects.
\end{thm}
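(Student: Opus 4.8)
The plan is to first install the ambient structure on $\C/\D$ and then verify the triangulation axioms by systematically lifting diagrams back to $\C$ along $\Loc^\triangle$. Since $\Sigma$ carries $\Iso(\D)$ into itself --- if $C_w\in\D$ then $C_{\Sigma w}\cong\Sigma C_w\in\D$ by Remark~\ref{shiftcone} and closure of $\D$ under suspension, and symmetrically for $\Sigma^{-1}$ --- the suspension descends to an automorphism of $\C/\D$ for which $\Loc^\triangle\Sigma=\Sigma\Loc^\triangle$ holds strictly; thus $\Loc^\triangle$ becomes a triangulated functor with $\eta=\id$ as soon as the triangulation is defined, sending triangles to triangles by fiat. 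Invoking the standard fact that the calculus of fractions of Proposition~\ref{trisubcatlocal} makes $\C/\D$ additive with $\Loc^\triangle$ additive (biproducts descending from $\C$), I would then check \ref{T1}, \ref{T3} and \ref{T4} essentially for free: \ref{T3} is built into the definition of the triangulation; \ref{T1} holds because $\Loc^\triangle$ of the triangle $X = X\to 0\to\Sigma X$ is a triangle and the objects of $\C/\D$ are exactly those of $\C$; and \ref{T4} follows from \ref{T4} in $\C$ together with $\Loc^\triangle\Sigma=\Sigma\Loc^\triangle$.

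For \ref{T2}, the key observation is that every morphism of $\C/\D$ has the form $\Loc^\triangle(s)^{-1}\Loc^\triangle(a)$ with $s\in\Iso(\D)$ and $a$ a genuine morphism of $\C$. Given such a morphism, I would complete $a$ to a triangle in $\C$, push it down to a triangle in $\C/\D$, and transport it along the isomorphism $\Loc^\triangle(s)$ relating the codomain of $a$ to the prescribed target; the result is an isomorphic candidate triangle whose first morphism is the given one, and being isomorphic to the image of a triangle it is a triangle.

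The hard part will be the composition axiom \ref{T5}. Here the subtlety is that a composable pair $f,g$ in $\C/\D$ need not be represented by honest composable morphisms of $\C$, so the three given triangles cannot be lifted naively. My plan is to first write $f=\Loc^\triangle(s)^{-1}\Loc^\triangle(a)$ as a fraction, then transport $g$ across the isomorphism $\Loc^\triangle(s)$ and represent the result by a second fraction, extracting honest morphisms $a\colon X\to Y_1$ and $b\colon Y_1\to Z_1$ in $\C$ whose images --- once the two comparison isomorphisms $\Loc^\triangle(s)$ and $\Loc^\triangle(t)$ are absorbed --- realise $f$, $g$ and $h=gf$ simultaneously. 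Because mapping cones are unique up to isomorphism (Corollary~\ref{mapconeiso}) and triangles in $\C/\D$ are closed under isomorphism, I may then transport the three given triangles so that they become precisely $\Loc^\triangle$ of chosen $\C$-triangles on $a$, $b$ and $ba$. At that point \ref{T5} in $\C$ supplies $\tilde k,\tilde k',\tilde k''$ and a fourth $\C$-triangle; applying $\Loc^\triangle$ and transporting the filling morphisms back along $\Loc^\triangle(s)$ and $\Loc^\triangle(t)$ yields the required $k,k',k''$ and the fourth triangle. I expect the bookkeeping of these comparison isomorphisms to be the genuinely delicate point, and the availability of \emph{both} left and right fractions from Proposition~\ref{trisubcatlocal} is exactly what makes the simultaneous representation of the composable pair possible.

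Finally, for the universal property I would argue as follows. For $X\in\D$ the morphism $X\to 0$ lies in $\Iso(\D)$ --- its cone is $\Sigma X\in\D$ --- so $\Loc^\triangle(X)\cong 0$, i.e.\ $\Loc^\triangle$ annihilates $\D$. If $F\colon\C\lra\mathcal{E}$ is any triangulated functor killing $\D$, then for each $w\in\Iso(\D)$ the cone $F(C_w)\cong 0$, so Corollary~\ref{iso} forces $F(w)$ to be an isomorphism; hence $F$ inverts $\Iso(\D)$ and, by the universal property of the ordinary category of fractions $\C/\D=\C[\Iso(\D)^{-1}]$, factors uniquely up to natural isomorphism through $\Loc^\triangle$ as a plain functor $\overline{F}$. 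It then remains to see that $\overline{F}$ is triangulated: it inherits its suspension-comparison isomorphism from that of $F$ via $\Loc^\triangle\Sigma=\Sigma\Loc^\triangle$, and it sends triangles to triangles because every triangle of $\C/\D$ is isomorphic to $\Loc^\triangle$ of a $\C$-triangle, on which $\overline{F}$ agrees with $F$. The required uniqueness up to natural isomorphism is precisely the uniqueness already supplied by the ordinary localization.
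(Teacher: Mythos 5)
Your overall architecture coincides with the paper's: define the suspension via the universal property of the fraction category, declare a candidate triangle of $\C/\D$ to be a triangle iff it is isomorphic to the image of one, dispose of \ref{T1}--\ref{T4} and \ref{T2} by transport along fractions, and reduce \ref{T5} to composable pairs of the form $(\Loc^\triangle(a),\Loc^\triangle(b))$ lifted from $\C$. Your first reduction---that every composable pair in $\C/\D$ is conjugate, by isomorphisms at the three objects, to the image of an honestly composable pair in $\C$---is also the paper's first auxiliary step, and your use of both left and right fractions from Proposition~\ref{trisubcatlocal} there is exactly right. The universal property argument at the end is likewise the paper's.

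The gap is in the sentence ``Because mapping cones are unique up to isomorphism (Corollary~\ref{mapconeiso}) \dots\ I may then transport the three given triangles so that they become precisely $\Loc^\triangle$ of chosen $\C$-triangles on $a$, $b$ and $ba$.'' Corollary~\ref{mapconeiso} is a statement about a category that is \emph{already} triangulated; invoking it inside $\C/\D$ is circular, since the triangulation of $\C/\D$ is what you are in the middle of constructing. Nor can you apply it in $\C$: by definition, the three given triangles are isomorphic in $\C/\D$ to images of $\C$-triangles on some morphisms $\hat f_1,\hat g_1,\hat h_1$, and these are related to $a$, $b$, $ba$ only through isomorphisms in $\Mor(\C/\D)$; in $\Mor(\C)$ the morphisms $\hat h_1$ and $ba$ are in general \emph{not} isomorphic (their cones may differ by summands in $\overline{\D}$, which is the whole point of Proposition~\ref{isocrit2}), so no uniqueness statement about cones in $\C$ identifies their triangles. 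What your transport step actually requires is uniqueness of mapping cone triangles in $\C/\D$ itself: if two triangles of $\C/\D$ have first morphisms isomorphic in $\Mor(\C/\D)$, then that isomorphism extends to an isomorphism of the full triangles. This is precisely the paper's second auxiliary statement, and proving it is the hardest part of the whole theorem: one decomposes the two given isomorphisms as formal fractions, rectifies the lifted diagram so that it commutes in $\C$ using Lemma~\ref{fraceq}, interpolates two auxiliary triangles on the morphisms $l\hat f$ and $\hat f'k$, and then produces the isomorphism on third objects as a composite of filling morphisms chosen in $\Iso(\D)$ and $\Iso(\overline{\D})$---which needs Lemma~\ref{3x3lemma} and Proposition~\ref{isocrit2}, hence also Proposition~\ref{triplecomp} and Corollary~\ref{thickcrit}. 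Without this ingredient (or a substitute for it), your verification of \ref{T5} does not go through; the bookkeeping of comparison isomorphisms that you flagged as ``delicate'' is not mere bookkeeping but conceals this entire argument.
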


\begin{proof}
The universal property of $\C/\D$ as a category of fractions shows that the suspension $\Sigma$ extends uniquely to a functor $\Sigma_{\C/\D}\: : \: \C/\D\lra \C/\D$ such that the diagram
\beq
\xymatrix{{\C}\ar[d]_{\Loc}\ar[r]^\Sigma & {\C}\ar[d]^{\Loc}\\
{\C/\D}\ar[r]_{\Sigma_{\C/\D}} & {\C/\D}}
\eeq
commutes. The same holds true for $\Sigma^{-1}$, and it follows that $\Sigma_{\C/\D}$ is invertible. 

For $\Loc$ to be triangulated, the image of any triangle in $\C$ has to be a triangle in $\C/\D$. Hence we can try to define a candidate triangle in $\C/\D$ to be a triangle if it is isomorphic (in $\C/\D$) to a triangle coming from $\C$. Then~\ref{T1} is trivial, while~\ref{T3} holds by definition. Concerning~\ref{T2}, any morphism in $\C/\D$ is of the form $\Loc(f)\Loc(w)^{-1}$ with $f$ a morphism of $\C$ and $w\in\Iso(\D)$, so let this be the morphism that we want to complete to a triangle. In $\C$, we can complete $f$ to a triangle $(f,g,h)$, and then there is an isomorphism of candidate triangles
\beq
\xymatrix@!C@C+33pt{{\bullet}\ar@{~>}[d]_{\Loc(w)}\ar[r]^{\Loc(f)} & {\bullet}\ar@{=}[d]\ar[r]^{\Loc(g)} & {\bullet}\ar@{=}[d]\ar[r]^{\Loc(h)} & {\bullet}\ar@{~>}[d]^{\Loc(\Sigma w)}\\
{\bullet}\ar[r]_{\Loc(f)\Loc(w)^{-1}} & {\bullet}\ar[r]_{\Loc(g)} & {\bullet}\ar[r]_{\Loc(\Sigma w)\Loc(h)} & {\bullet}}
\eeq
so that the second row is a triangle by definition.

As for~\ref{T4}, let $(f,g,h)$ be a triangle in $\C/\D$. By definition, we know that it is isomorphic to a triangle $(\Loc(f'),\Loc(g'),\Loc(h'))$ with $(f',g',h')$ being a triangle in $\C$, and that $(g',h',-\Sigma f')$ is also a triangle in $\C$. Hence in $\C/\D$, we have the diagram
\beq
\xymatrix@!C@C+33pt{ \bullet \ar[r]^{\Loc(f')} \ar@{~>}[d] & \bullet \ar[r]^{\Loc(g')} \ar@{~>}[d] & \bullet \ar[r]^{\Loc(h')} \ar@{~>}[d] & \bullet \ar[r]^{\Loc(-\Sigma f')} \ar@{~>}[d] & \bullet \ar@{~>}[d] \\
 \bullet \ar[r]^f & \bullet \ar[r]^g & \bullet \ar[r]^h & \bullet \ar[r]^{-\Sigma_{\C/\D} f} & \bullet }
\eeq
where all vertical morphisms are isomorphisms, with the last two being the $\Sigma_{\C/\D}$-suspensions of the first two. Hence $(g,h,-\Sigma_{\C/\D} f)$ is also a triangle in $\C/\D$.

It remains to verify~\ref{T5}, which is the most difficult part. It will turn out to be useful to have some auxiliary statements. First, we show that every composable pair of morphisms $(f,g)$ in $\C/\D$ can be lifted to a composable pair $(\hat{f},\hat{g})$ in $\C$ up to isomorphism as follows. Since $\C/\D$ is a category of fractions, we can write $f=\Loc(\hat{f})\Loc(v)^{-1}$ and $g=\Loc(w)^{-1}\Loc(\hat{g})$ with $v,w\in\Iso(\D)$, which means that we have a diagram
\[
	\xymatrix@!C@C+33pt{ \bullet \ar[r]^{\Loc(\hat{f})} \ar@{~>}[d]_{\Loc(v)} & \bullet \ar[r]^{\Loc(\hat{g})} \ar@{=}[d] & \bullet \ar@{~>}[d]^{\Loc(w)} \\
	\bullet \ar[r]_f & \bullet \ar[r]_g & \bullet }
\]
The relevant statement that we use in the following is that any composable pair in $\C/\D$ is isomorphic to a composable pair coming from $\C$ via conjugation with an isomorphism at each object; that these can be taken to be of the form $\Loc(v)$, $\id$ and $\Loc(w)$ will not be essential. Another auxiliary statement is the uniqueness of mapping cones in $\C/\D$, by which we mean the statement that if $(f,g,h)$ and $(f,g',h')$ are both triangles in $\C/\D$, then there is a diagram
\[
	\xymatrix@!C@C+33pt{ \bullet \ar[r]^f \ar@{=}[d] & \bullet \ar[r]^g \ar@{=}[d] & \bullet \ar[r]^h \ar@{-->}[d]^{\sim} & \bullet \ar@{=}[d] \\
	\bullet \ar[r]_f & \bullet \ar[r]_{g'} & \bullet \ar[r]_{h'} & \bullet}
\]
We interpret this as saying that the mapping cone triangle is unique up to isomorphism. To see that this indeed holds in $\C/\D$, we use the definition of triangle in terms of candidate triangles isomorphic to the $\Loc$-images of triangles, by which it is enough to prove the following statement: if $(\hat{f},\hat{g},\hat{h})$ and $(\hat{f}',\hat{g}',\hat{h}')$ are triangles in $\C$, and $\hat{f}$ is isomorphic to $\hat{f}'$ in $\Mor(\C)$, then there is an isomorphism of triangles in $\C/\D$
\[
	\xymatrix@!C@C+33pt{ \bullet \ar[r]^{\Loc(\hat{f})} \ar[d]^{\sim} & \bullet \ar[r]^{\Loc(\hat{g})} \ar[d]^{\sim} & \bullet \ar[r]^{\Loc(\hat{h})} \ar@{-->}[d]^{\sim} & \bullet \ar[d]^{\sim} \\
	\bullet \ar[r]_{\Loc(\hat{f}')} & \bullet \ar[r]_{\Loc(\hat{g}')} & \bullet \ar[r]_{\Loc(\hat{h}')} & \bullet}
\]
where the first two vertical isomorphisms define the given isomorphism between $\hat{f}$ and $\hat{f}'$ and the dashed isomorphism needs to be found. We start by decomposing the first vertical isomorphism (and its suspension at the very right) as a formal right fraction $\Loc(k)\Loc(v)^{-1}$ with $v\in\Iso(D)$, and the second vertical isomorphism as a formal left fraction $\Loc(w)^{-1}\Loc(l)$ with $w\in\Iso(\D)$. This turns the diagram into
\[
	\xymatrix@!C@C+33pt{ \bullet \ar[r]^{\Loc(\hat{f})} \ar@{<~}[d]^{\Loc(v)} & \bullet \ar[r]^{\Loc(\hat{g})} \ar[d]^{\Loc(l)} & \bullet \ar[r]^{\Loc(\hat{h})} \ar@{-->}[dd]^{\sim} & \bullet \ar@{<~}[d]^{\Loc(\Sigma v)} \\
	\bullet \ar[d]^{\Loc(k)} & \bullet \ar@{<~}[d]^{\Loc(w)} & & \bullet \ar[d]^{\Loc(\Sigma k)} \\
	\bullet \ar[r]_{\Loc(\hat{f}')} & \bullet \ar[r]_{\Loc(\hat{g}')} & \bullet \ar[r]_{\Loc(\hat{h}')} & \bullet}
\]
Now let us try to consider this diagram---without the yet-to-be-found dashed arrow---in $\mathcal{C}$ by ``erasing'' all applications of $\Loc$. There is clearly no guarantee that the resulting diagram will commute in $\C$, but by Lemma~\ref{fraceq} we can assume that the hexagon on the left commutes, i.e.~$w\hat{f}'k=l\hat{f}v$, without loss of generality: since $\Loc(w\hat{f}'k)=\Loc(l\hat{f}v)$, there exists $u\in\Iso(\D)$ such that $uw\hat{f}'k=ul\hat{f}v$, but then we can redefine $w$ and $l$ to be $uw$ and $ul$, respectively, thereby absorbing $u$. Thanks to this argument, we can strip off all applications of $\Loc$ and assume that this results in a commutative diagram in $\C$. Now we define the two intermediate horizontal triangles in
\[
	\xymatrix@!C@C+33pt@!R@R+15pt{ \bullet \ar[r]^{\hat{f}} \ar@{=}[d] & \bullet \ar[r]^{\hat{g}} \ar[d]^{l} & \bullet \ar[r]^{\hat{h}} \ar@{-->}[d]^r & \bullet \ar@{=}[d] \\
	\bullet \ar[r]^{l\hat{f}} \ar@{<~}[d]^{v} & \bullet \ar[r] \ar@{<~}[d]^{w} & \bullet \ar[r] \ar@{<--}[d]^s & \bullet \ar@{<~}[d]^{\Sigma v} \\
	\bullet \ar[r]^{\hat{f}'k} \ar[d]^{k}  & \bullet \ar[r] \ar@{=}[d] & \bullet \ar[r] \ar@{-->}[d]^t & \bullet \ar[d]^{\Sigma k} \\
	\bullet \ar[r]_{\hat{f}'} & \bullet \ar[r]_{\hat{g}'} & \bullet \ar[r]_{\hat{h}'} & \bullet}
\]
to be triangles generated by the newly introduced morphisms $l\hat{f}$ and $\hat{f}'k$, which we can picture as diagonals in the previous diagram. Now the morphisms $r$, $s$ and $t$ should be chose to be certain filling morphisms in $\C$ as follows. First, an application of the $3\!\times\! 3$-lemma (Lemma~\ref{3x3lemma}) to the middle square on the left shows that one can choose $s\in\Iso(\D)$, since $\D$ is closed under taking mapping cones. Similarly, by Proposition~\ref{isocrit2} we know that $k,l\in\Iso(\overline{\D})$, and hence one can also choose $r,t\in\Iso(\overline{D})$, which again implies that $\Loc(r)$ and $\Loc(t)$ are isomorphisms in $\C/\D$. Taken all together, this finishes the proof of our second auxiliary statement on the uniqueness of mapping cones in $\C/\D$.

Using these two auxiliary statements, we can now prove that~\ref{T5} holds in $\C/\D$. By the uniqueness of mapping cones, it is enough to start with any composable pair of morphisms in $\C/\D$, complete them to triangles in a unique-up-to-isomorphism way, and then show that the conclusion of~\ref{T5} is indeed true. Moreover, again by uniqueness of mapping cones, it is sufficient to show this only for one representative of every isomorphism class of composable pairs of morphisms. By the first auxiliary statement, it is therefore sufficient to assume that the composable pair is of the form $\Loc(g)\circ \Loc(f)$ for $f,g\in\Mor(\C)$. But in this case, the claim follows from~\ref{T5} in $\C$.

Finally, we show that with this triangulation, $\C/\D$ has the desired universal property. Clearly, $\Loc\: :\: \C\lra \C/\D$ maps all objects in $\D$ to zero objects by Corollary~\ref{iso}. Then any other triangulated functor mapping the objects of $\D$ to zero also maps all $w\in\Iso(\D)$ to isomorphisms. Hence as a plain functor, it uniquely factors over $\Loc$, since $\Loc$ was constructed as a localization~\cite[Thm.~3.9]{Fri1}. By definition of the suspension on $\C/\D$, this factored functor commutes with the suspensions; and by definition of the triangulation on $\C/\D$ and by~\ref{T3} in the target category, it necessarily preserves triangles.
\end{proof}

\begin{cor}
\label{localhomfunctor}
If $H:\C\lra\Ab$ is a homological functor vanishing on all of $\D$, then $H$ factors uniquely over $\Loc$ and thus induces a homological functor $\C/\D\lra\Ab$.
\end{cor}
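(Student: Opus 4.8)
The plan is to reduce the statement to the universal property of the underlying \emph{plain} localization $\C/\D=\C[\Iso(\D)^{-1}]$ that was already used in the proof of Theorem~\ref{loc}, and then to upgrade the resulting factorization to a homological functor. First I would show that $H$ already inverts every morphism in $\Iso(\D)$. Granting this, the universal property of $\Loc$ as an ordinary localization of categories (\cite[Thm.~3.9]{Fri1}) produces a unique ordinary functor $\overline{H}\: :\: \C/\D\lra\Ab$ with $\overline{H}\circ\Loc=H$. What then remains is to check that $\overline{H}$ is additive and half-exact.

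To see that $H$ inverts $\Iso(\D)$, observe that since $\D$ is closed under suspension and desuspension, the hypothesis that $H$ vanishes on $\D$ says exactly that every object of $\D$ is $H$-acyclic in the sense of Example~\ref{acyclic}. For $w\in\Iso(\D)$ with mapping cone $C_w\in\D$, I would apply $H$ to the Puppe sequence of its mapping cone triangle. The two terms adjacent to $H(w)$ are $H(\Sigma^{-1}C_w)$ and $H(C_w)$, both of which vanish because $\Sigma^{-1}C_w,C_w\in\D$; exactness then forces $H(w)$ to be simultaneously injective and surjective. This is the same mechanism, phrased homologically, by which $F(C_w)\cong 0$ was seen to make $F(w)$ invertible at the beginning of this section.

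For half-exactness I would use the definition of the triangulation on $\C/\D$ from Theorem~\ref{loc}: every triangle there is isomorphic to the image $(\Loc(f),\Loc(g),\Loc(h))$ of some triangle $(f,g,h)$ in $\C$, and $\overline{H}$ carries this image to the sequence $H(f),H(g)$ on $H(X),H(Y),H(Z)$, which is exact because $H$ is homological on $(f,g,h)$. As exactness in $\Ab$ is preserved under isomorphism and $\overline{H}$ is a functor, $\overline{H}$ is half-exact on every triangle. Additivity is verified by checking that $\overline{H}$ preserves biproducts: every object of $\C/\D$ is of the form $\Loc(X)$, and since $\Loc$ is additive the biproduct of $\Loc(X)$ and $\Loc(Y)$ is $\Loc(X\oplus Y)$ together with the $\Loc$-images of the canonical inclusions and projections; applying $\overline{H}$ and using $H(X\oplus Y)\cong H(X)\oplus H(Y)$ shows that biproduct diagrams go to biproduct diagrams, and a biproduct-preserving functor between preadditive categories is additive (\cite[Sec.~VIII.2]{Mac}). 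Uniqueness is inherited for free, since a homological functor is in particular an ordinary functor and is thus already pinned down by the plain universal property.

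The main obstacle is the additivity of $\overline{H}$, as it is the only step that cannot be settled by quoting the universal property of the plain localization: it relies on knowing that $\Loc$ is additive and that biproducts in $\C/\D$ are inherited from $\C$, both of which rest on the additive structure of the category of fractions developed in \cite{Fri1}. If one prefers to avoid biproducts, the same conclusion follows by writing two parallel morphisms over a common denominator as $\Loc(a)\Loc(v)^{-1}$ and $\Loc(b)\Loc(v)^{-1}$ and computing $\overline{H}$ of their sum directly from the additivity of $H$.
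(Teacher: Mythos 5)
Your proposal is correct and follows essentially the same route as the paper's proof: show via long exact sequences that $H$ inverts every morphism in $\Iso(\D)$, then invoke the universal property of the category of fractions $\C/\D$ to obtain the unique factorization. The only difference is that you spell out the verification that the induced functor $\overline{H}$ is additive and half-exact, which the paper leaves implicit in the phrase ``thus induces a homological functor''; your verification of both points is sound.
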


\begin{proof}
By long exact sequences, $H$ maps all $w\in\Iso(\D)$ to isomorphisms of abelian groups. But then the assertion follows from the universal property of the category of fractions $\C/\D$.
\end{proof}

\begin{prop}
\label{verdierkernel}
It holds that $\ker(\Loc:\C\lra\C/\D)=\overline{\D}$.
\end{prop}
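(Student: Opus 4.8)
The plan is to exploit the paper's recurring dictionary between objects becoming zero and morphisms becoming isomorphisms, together with the characterization of such morphisms already established in Proposition~\ref{isocrit2}. By definition, $X\in\ker(\Loc)$ precisely when $\Loc(X)\cong 0$ in $\C/\D$. Since the localization functor is additive, $\Loc(0)$ is a zero object of $\C/\D$, and in any pointed category an object is zero exactly when the unique morphism from it to a zero object is an isomorphism. Hence, writing $u_X\: :\: X\lra 0$ for the unique morphism in $\C$, the first step is to observe that $X\in\ker(\Loc)$ if and only if $\Loc(u_X)$ is an isomorphism in $\C/\D$.

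Next I would identify the mapping cone of $u_X$. Axiom~\ref{T1} gives the triangle $X\stackrel{=}{\lra}X\lra 0\lra\Sigma X$, and a single application of Proposition~\ref{shift} turns this into a triangle $X\lra 0\lra\Sigma X\lra\Sigma X$; thus $C_{u_X}\cong\Sigma X$. Now Proposition~\ref{isocrit2} asserts that $\Loc(u_X)$ is an isomorphism if and only if $u_X\in\Iso(\overline{\D})$, i.e.\ if and only if $C_{u_X}\cong\Sigma X$ lies in $\overline{\D}$. Because $\overline{\D}$ is a triangulated subcategory and hence closed under desuspension, $\Sigma X\in\overline{\D}$ is equivalent to $X\in\overline{\D}$.

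Chaining these equivalences yields $X\in\ker(\Loc)$ if and only if $X\in\overline{\D}$, which is exactly the assertion; both inclusions come out simultaneously, since every step is a genuine ``if and only if''. I do not expect a serious obstacle here: essentially all the substantive content has already been absorbed into Proposition~\ref{isocrit2}, whose harder (``only if'') direction in turn rests on Proposition~\ref{triplecomp} and Corollary~\ref{thickcrit}. The only point demanding a little care is the reduction to the single morphism $u_X$---namely verifying that $\Loc(0)$ is a zero object and that a zero object is detected by invertibility of the map into $0$---but this is immediate from additivity of $\Loc$ and from the elementary fact that morphisms into a zero object are unique, so the reduction holds in any pointed category.
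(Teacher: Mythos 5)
Your proof is correct, and it follows the same basic strategy as the paper's: translate ``$\Loc(X)$ is a zero object'' into ``a canonical morphism attached to $X$ becomes invertible in $\C/\D$'' and then invoke Proposition~\ref{isocrit2}, which is where all the real work lives. The difference is in which test morphism you pick. The paper uses the zero endomorphism $0_X\colon X\lra X$, noting that an object of an additive category is zero if and only if its zero endomorphism is invertible; since $C_{0_X}\cong X\oplus\Sigma X$, the paper must then invoke Corollary~\ref{thickness} (essentially the content of Proposition~\ref{thickclosure}) to pass from $X\oplus\Sigma X\in\overline{\D}$ to $X\in\overline{\D}$. You instead test the unique morphism $u_X\colon X\lra 0$, whose cone you correctly identify as $\Sigma X$ via~\ref{T1} and Proposition~\ref{shift}, so your final step needs only closure of $\overline{\D}$ under desuspension---a more elementary fact---at the price of verifying that $\Loc(0)$ is a zero object of $\C/\D$. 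That verification is indeed the only delicate point, and your reduction to additivity of $\Loc$ is fine (alternatively: $\Loc$ is a triangulated functor by Theorem~\ref{loc}, so it sends the~\ref{T1} triangle on $0$ to a triangle exhibiting $\Loc(0)$ as a mapping cone of an identity, which is zero by Proposition~\ref{mappingcone}). In short, your route trades the thickness criterion for a triviality about zero objects, a mild but genuine simplification.
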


\begin{proof}
Recall that an object in an additive category is a zero object if and only if its zero endomorphism is an isomorphism. Thus we need to show that $\Loc(0_X)$ is an isomorphism if and only if $X\in\overline{D}$. By Proposition~\ref{isocrit2}, the former happens if and only if the mapping cone of $0_X:X\lra X$ lies in $\D$. Since this mapping cone is $X\oplus\Sigma X$, the claim follows from Corollary~\ref{thickness}.
\end{proof}

We can now prove the existence of the Verdier localization $\C[\W^{-1}]^\triangle$ with respect to any localizing class $\W$ and characterize its kernel:

\begin{thm}
\label{mainthm}
Let $\W$ be any class of morphisms in $\C$, and let $\D_\W$ be the thick triangulated subcategory generated by the class of mapping cones $\{C_w,\,w\in\W\}$. Then $\C[\W^{-1}]^\triangle=\C/\D_\W$, and the kernel of $\Loc^\triangle$ is exactly $\D_\W$.
\end{thm}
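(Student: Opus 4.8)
The plan is to show that the category $\C/\D_\W = \C[\Iso(\D_\W)^{-1}]$, equipped with its localization functor $\Loc$, already satisfies the universal property defining $\C[\W^{-1}]^\triangle$. Since $\D_\W$ is thick and in particular a triangulated subcategory, Theorem~\ref{loc} guarantees that $\C/\D_\W$ is triangulated and that $\Loc$ is the universal triangulated functor annihilating $\D_\W$. The entire argument then reduces to one observation: for a triangulated functor out of $\C$, inverting $\W$ is the same as annihilating $\D_\W$.

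First I would check that $\Loc$ inverts $\W$. Each $w\in\W$ has its mapping cone $C_w$ in $\D_\W$ by construction, so $w\in\Iso(\D_\W)$ and hence $\Loc(w)$ is an isomorphism; this is the first clause of the universal property. Next, given any triangulated functor $F:\C\lra\D$ sending every $w\in\W$ to an isomorphism, I would argue that $F$ kills $\D_\W$. Because $F$ is triangulated, applying it to a mapping cone triangle of $w$ yields (after the coherence isomorphism $\eta$) a triangle in the target whose first morphism $F(w)$ is invertible, which forces $F(C_w)\cong 0$ by Corollary~\ref{iso}. Thus $\ker(F)$ contains every $C_w$; since $\ker(F)$ is a thick triangulated subcategory and $\D_\W$ is by definition the smallest thick triangulated subcategory containing all the $C_w$, we obtain $\D_\W\subseteq\ker(F)$, i.e.\ $F$ annihilates $\D_\W$.

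By the universal property established in Theorem~\ref{loc}, such an $F$ then factors uniquely (up to natural isomorphism) over $\Loc$. Conversely $\Loc$ itself annihilates $\D_\W$, so both clauses of the universal property of $\C[\W^{-1}]^\triangle$ are verified for $\C/\D_\W$ together with $\Loc$. By the uniqueness up to equivalence of any object defined by such a universal property, this yields $\C[\W^{-1}]^\triangle=\C/\D_\W$, with $\Loc^\triangle=\Loc$. Finally, the kernel is immediate: as $\D_\W$ is already thick we have $\overline{\D_\W}=\D_\W$, and Proposition~\ref{verdierkernel} gives $\ker(\Loc^\triangle)=\ker(\Loc:\C\lra\C/\D_\W)=\overline{\D_\W}=\D_\W$.

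The only genuinely substantive step is the identification of the triangulated functors inverting $\W$ with those annihilating $\D_\W$, and the crux there is that the kernel of a triangulated functor is automatically a \emph{thick} triangulated subcategory. This is precisely what allows the ``generated by'' clause in the definition of $\D_\W$ to force the inclusion $\D_\W\subseteq\ker(F)$; everything else is bookkeeping with universal properties and the already-proved facts that $\C/\D_\W$ is triangulated and that $\Loc$ is universal among functors killing $\D_\W$.
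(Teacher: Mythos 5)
Your proof is correct and follows essentially the same route as the paper's: show that a triangulated functor inverts $\W$ if and only if it annihilates $\D_\W$ (using Corollary~\ref{iso}, thickness of kernels, and the minimality of $\D_\W$), then invoke the universal property from Theorem~\ref{loc}. Your treatment is in fact slightly more complete, since you spell out the kernel claim via $\overline{\D_\W}=\D_\W$ and Proposition~\ref{verdierkernel}, a step the paper's proof leaves implicit.
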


\begin{proof}
Any triangulated functor which takes all morphisms in $\W$ to isomorphisms also takes all mapping cones $C_w$ to zero objects, and therefore takes all of $\D_\W$ to zero objects. Hence such a functor uniquely factors over $\C/\D_\W$ by Theorem~\ref{loc}. On the other hand, $\C/\D_\W$ itself already has the property that all morphisms in $\W$ become invertible.
\end{proof}

An important concern in many applications is that a Verdier localization $\C/\D$ or $\C[\W^{-1}]^\triangle$ need not be locally small if $\C$ is a large category: as a category of fractions, a hom-set in such a Verdier localization is a quotient of a proper class\footnote{By a ``proper class'' we really mean a set in a higher Grothendieck universe.} containing all the corresponding roofs by an equivalence relation. This ought to be kept in mind when applying Theorems~\ref{loc} and~\ref{mainthm}. It is known that under certain conditions which frequently hold in practice, one can construct the Verdier localization as a locally small category; see e.g.~Theorem~4.4.9 and Remark 9.1.17 in~\cite{Nee}.

\section{Examples}
\label{se:ex}

We end these notes with a collection of example situations and contexts in which triangulated categories arise in various areas of mathematics.

Broadly speaking, the two most important constructions of triangulated categories are the following two: first the homotopy category of a stable model category is triangulated (Example~\ref{model}); second, the stable category of a Frobenius category is triangulated (Example~\ref{frob}). Triangulated categories which arise from stable model categories are called~\emph{topological}, while triangulated categories that arise from Frobenius categories are called~\emph{algebraic}~\cite{Schwede,Schwede2}. There are triangulated categories that are neither topological nor algebraic~\cite{MSS}, but as far as we know, all ``naturally arising'' triangulated categories are algebraic or topological, while many actually are both. 

\subsection{Semisimple abelian categories}
\label{semisimpleab}
These are toy examples with little relevance beyond being relatively elementary examples of triangulated categories in which explicit computations can be performed easily.

Let $\A$ be a semisimple abelian category, i.e.~an abelian category in which every short exact sequence
\[
\xymatrix{ 0 \ar[r] & A \ar[r] & B \ar[r] & C \ar[r] & 0 }
\]
satisfies the following three equivalence conditions:
\begin{enumerate}
\item The epimorphism $B\lra C$ has a right inverse (section);
\item The monomorphism $A\lra B$ has a left inverse (retraction);
\item There is an isomorphism $B\cong A\oplus C$ such that the original short exact sequence is isomorphic to a biproduct short exact sequence
\[
\xymatrix{ 0 \ar[r] & A \ar[r] & A\oplus C \ar[r] & C \ar[r] & 0 }
\]
\end{enumerate}
In this situation, take $\Sigma:=\id_\A$, which is trivially an automorphism. Then define a candidate triangle
\[
\xymatrix{ X \ar[r]^f & Y \ar[r]^g & Z \ar[r]^h & X }
\]
to be a triangle if and only if it is exact at all three objects, where exactness at $X$ means that $\ker(f) = \coker(h)$. With this definition,~\ref{T1}--\ref{T4} are all immediate, so that it only remains to prove~\ref{T5}. For this, we need to take a closer look at distinguished triangles, and this is where the semisimplicity of $\A$ will become relevant. So for any triangle as above, we have $\ker(h)=\im(g)$ by exactness at $Z$, but also $\im(g)\cong Y/\ker(g)$ by the isomorphism theorem, and finally $Y/\ker(g)\cong \coker(f)$ by exactness at $Y$. Taken together, this yields $\ker(h)\cong \coker(f)$. Dually, we can likewise conclude $\coker(g) \cong \ker(f)$,

Semisimplicity tells us that the short exact sequence
\[
\xymatrix{ 0 \ar[r] & \ker(h) \ar[r] & Z \ar[r] & \coker(g) \ar[r] & 0 }
\]
splits, and hence $Z\cong \ker(h)\oplus \coker(g)$. By the previous observations, this also shows that $Z\cong \ker(f) \oplus \coker(f)$. In particular, the original triangle that we started with is isomorphic to the triangle
\beq
\xymatrix{X\ar[r]^f & Y \ar[r] & \ker(f)\oplus\coker(f)\ar[r] & X}
\eeq
in which the second morphism comes from the cokernel projection $Y\lra \coker(f)$ and the third from the kernel inclusion $\ker(f)\lra X$. In conclusion, we have identified $\ker(f)\oplus\coker(f)$ as a mapping cone of $f$. Conversely, it is easy to show that every triangle of this new form is indeed exact at all three objects, so that we can also regard the new form of the triangle as a characterization of triangles.

We now sketch the verification of~\ref{T5}. For any morphism $f :  X\lra Y$, we write $i_f  : \ker(f)\lra X$ for the inclusion of its kernel into the domain, and $p_f  : Y\lra \coker(f)$ for the projection from the codomain onto its cokernel. Then~\ref{T5} takes on the form
\[
\xymatrix@!@-4pc{ X \ar@(ru,lu)[rr]^{gf}\ar[rd]_f & & Z \ar@(ru,lu)[rr]^{\left(\begin{array}{c} 0\\ p_g\end{array}\right)} \ar[rd]^{\left(\begin{array}{c} 0\\ p_{gf}\end{array}\right)} & & \ker(g)\oplus\coker(g) \ar@(ru,lu)@{-->}[rr]^{\left(\begin{array}{cc} 0 & 0\\ p_f i_g & 0 \end{array}\right)} \ar[rd]^{\left(\begin{array}{cc}i_g & 0\end{array}\right)} & & \ker(f)\oplus\coker(f) \\
& Y \ar[ur]^g \ar[rd]_{\left(\begin{array}{c} 0\\ p_f\end{array}\right)} & & \ker(gf)\oplus\coker(gf) \ar@{-->}[ru]_{k'} \ar[rd]_{\left(\begin{array}{cc} i_{gf} & 0\end{array}\right)} & & Y \ar[ru]_{\left(\begin{array}{c} 0 \\ p_f \end{array}\right)} &\\
& & \ker(f)\oplus\coker(f) \ar@(rd,ld)[rr]_{\left(\begin{array}{cc} i_f & 0 \end{array}\right)} \ar@{-->}[ru]^{k} & & X \ar[ru]_{\Sigma f} & &}
\]
The problem here is to find suitable $k$ and $k'$. Using the universal properties of kernel and cokernel, one obtains morphisms
\[
\xymatrix@R-6pt{ \ker(f) \ar[r]^{j_1} & \ker(gf)   \ar[r]^{j_2} &   \ker(g) \\
 	 \coker(f) \ar[r]^{q_1} & \coker(gf) \ar[r]^{q_2} & \coker(gf) }
\]
and then one can take $k$ and $k'$ to be given by
\[
k:=\left(\begin{array}{cc} j_1 & 0 \\ 0 & q_1 \end{array}\right),\qquad k':=\left(\begin{array}{cc} j_2 & 0 \\ 0 & q_2 \end{array}\right).
\]
It is straightforward but laborious to check that this makes the above diagram commute and turns the three dashed arrows into another triangle.

Finally, it is worth noting that every triangle in $\A$ can actually be written as a direct sum of triangles of the form
\[
\xymatrix@R-1pc{ X \ar@{=}[r] & X \ar[r] & 0 \ar[r] & X \\
		 0 \ar[r] & X \ar@{=}[r] & X \ar[r] & 0 \\
		 X \ar[r] & 0 \ar[r] & X \ar@{=}[r] & X }
\]
This follows from the above intermediate form by decomposing $X\cong \ker(f)\oplus\im(f)$ and $Y\cong \im(f)\oplus \coker(f)$ using semisimplicity. This can be interpreted as stating that $\A$ does not contain any non-trivial triangles, and in this sense $\A$ is a toy example of a triangulated category. Another indication for the latter statement is that \emph{every} additive functor $\xymatrix{ \A \ar[r] & \Ab }$ is homological, again by semisimplicity together with the fact that every additive functor preserves biproducts.

\subsection{Derived categories}
\label{derived}

One triangulated category which can be constructed out of any abelian category $\A$ is its homotopy category of chain complexes $\K(\A)$~\cite[Prop.~10.2.4]{Weibel}. However, of much bigger significance is the \emph{derived category} $\D(\A)$, as introduced by Verdier~\cite{Ver}. The goal here is to find a triangulated category on which ``derived functors'' together with their long exact sequences can easily be defined as homological functors acting on Puppe sequences. Derived functors are ubiquitous in homological algebra and its fields of application, spanning a wide range of situations ranging from $\mathrm{Ext}$ and $\mathrm{Tor}$ on categories of modules~\cite[Ch.~2]{Weibel} via group (co-)homology and Lie algebra (co-)homology~\cite[Ch.~6/7]{Weibel} to sheaf cohomology~\cite[Sec.~III.8]{GM}.

The derived category $\D(\A)$ can be constructed as a Verdier localization of the homotopy category of chain complexes, where the localizing subcategory is given by all those chain complexes which have vanishing homology in all degrees. On the level of morphisms, this means that the localizing class $\W$ consists of the \emph{quasi-isomorphisms}, i.e.~those (homotopy classes) of maps which are degreewise isomorphisms on homology. One important reason for considering quasi-isomorphisms is that one would like a short exact sequence in $\A$
\[
\xymatrix{ 0 \ar[r] & A \ar[r] & B \ar[r] & C \ar[r] & 0 }
\]
to correspond to a triangle in $\D(\A)$, so that it determines a long exact sequence under any homological functor on $\D(\A)$. Now the object $A\in\A$ corresponds to the chain complex with $A$ in degree $0$ and zero objects everywhere else. Using the fact that the mapping cones in $\K(\A)$ are given by the usual mapping cones of chain complexes~\cite[Sec.~1.5]{Weibel}, we obtain that the inclusion $A\lra B$ has a mapping cone triangle in $\K(\A)$ given by the diagram of chain complexes
\[
\xymatrix{ \ldots \ar[r] & 0 \ar[r] \ar[d] & 0 \ar[r] \ar[d] & A \ar[r] \ar[d] & 0 \ar[r] \ar[d] & \ldots \\
	   \ldots \ar[r] & 0 \ar[r] \ar[d] & 0 \ar[r] \ar[d] & B \ar[r] \ar[d] & 0 \ar[r] \ar[d] & \ldots \\
	   \ldots \ar[r] & 0 \ar[r] \ar[d] & A \ar[r] \ar[d] & B \ar[r] \ar[d] & 0 \ar[r] \ar[d] & \ldots \\
	   \ldots \ar[r] & 0 \ar[r]        & A \ar[r]        & 0 \ar[r]        & 0 \ar[r]        & \ldots }
\]
where the triangle extends vertically and all morphisms are the obvious ones. (The suspension functor in $\K(\A)$ is simply given by shifting a chain complex by one degree, as indicated in the diagram with the first and last row.) In general, the chain complex in the third row is not homotopy equivalent to the one having only $C$ in degree $0$. However, the original short exact sequence defines a quasi-isomorphism
\[
\xymatrix{ \ldots \ar[r] & 0 \ar[r] \ar[d] & A \ar[r] \ar[d] & B \ar[r] \ar[d] & 0 \ar[r] \ar[d] & \ldots \\
	   \ldots \ar[r] & 0 \ar[r]        & 0 \ar[r]        & C \ar[r]        & 0 \ar[r]        & \ldots }
\]
which identifies $C$ with the mapping cone of $A\lra B$ in $\K(\A)$. In this way, any homological functor on the derived category $\D(\A)$ yields long exact sequences from short exact sequences in $\A$.

There are important triangulated subcategories of $\D(\A)$ on which one can construct many homological functors relatively explicitly. In particular, there is a full subcategory $\D^+(\A)$ containing all those chain complexes which are \emph{bounded below}, i.e.~which consist of zero objects below a certain degree. Since this class of chain complexes is closed under taking mapping cones, $\D^+(\A)$ is a triangulated subcategory of $\D(\A)$. If $\A$ has enough injectives, then $\D^+(\A)$ can also be described in a different way as the homotopy category of bounded below chain complexes of injective objects~\cite[Thm.~III.5.21]{GM}. This is why \emph{derived functors} like $\mathrm{Ext}$ and $\mathrm{Tor}$, group (co)homology or Hochschild (co)homology~\cite{Weibel} are traditionally constructed in terms of injective (or, dually, projective) resolutions. Similar statements apply to the 

\subsection{Stable homotopy categories of model categories}
\label{model}

Model categories provide an abstract framework for \emph{homotopical algebra}~\cite{Quillen,Hov}. This means that they capture the essential structures of ordinary homotopy theory by providing abstract and general definitions and theorems applying in many other situations, such as for example the $\mathbf{A}^1$-homotopy theory of schemes in algebraic geometry~\cite{MV}. In particular, in any model category one can talk about homotopy equivalences (typically called \emph{weak equivalences}), suspensions and loop space objects.

The most important other category associated to a model category is its \emph{homotopy category}. One way to obtain it is by localizing the model category at all weak equivalences, but one can also obtain the homotopy category by other means~\cite[Thm.~1.2.10]{Hov}. This homotopy category is close to being a triangulated category in the sense that it has some similar properties, although the suspension functor is not invertible in general; if the model category is pointed (i.e.~has a zero object), then the homotopy category is pre-triangulated in the sense of Hovey~\cite[Ch.~6]{Hov}.

Moreover, if a model category is \emph{stable} in the sense that the suspension functor is an equivalence, then its homotopy category is indeed triangulated~\cite[Ch.~7]{Hov}. For example, the category of bounded below chain complexes in an abelian category with enough injectives can be turned into a stable model category in such a way that the homotopy category turns out to be the derived category $\D^+(\A)$~\cite[p.~1.16]{Quillen}; in this sense, the present example comprises the previous one. But also for pointed model categories that are not necessarily stable, it is shown in~\cite{Del} that one can take the homotopy category, stabilize with respect to the suspension, and one obtains a triangulated category. Here, stabilizing with respect to the suspension means that one defines a new category in which the new morphisms $X\lra Y$ are be given by morphisms between iterated suspensions $\Sigma^n X\lra \Sigma^n Y$ in the original category in the sense of taking the (co-)limit as $n\to\infty$.

\subsection{Stable Homotopy Theory and Spectra}
\label{spectra}

These are actually two examples which are special cases of the previous one. First, one can start with the category of CW-complexes, consider its homotopy category, and stabilize as in the previous example. This gives a triangulated category known as the \emph{Spanier-Whitehead category} or the category of \emph{finite spectra}~\cite{Strickland}; see also~\cite{Margolis}, where the term ``Spanier-Whitehead category'' is used for the analogous category without the finiteness assumption. However, with either definition, the Spanier-Whitehead category does not yet have the desired completeness properties; in some sense, it is ``too small''~\cite[p.~vii]{Margolis}. Hence one usually works with the more intricate \emph{category of spectra} instead. In the algebraic topology literature, the term ``stable homotopy category'' always refers to the stable homotopy category of spectra. It has the appealing feature of \emph{Brown representability} alluded to in Remark~\ref{brownrep}, so that every cohomology theory can be represented by a spectrum.

\subsection{Stable categories of Frobenius categories}
\label{frob}

First, a \emph{Quillen exact category} $\Q$ is a full additive subcategory of an abelian category which is, in addition, closed under extensions, i.e.~if $A\in\Q$ and $B\in\Q$ and there is a short exact sequence
\[
\xymatrix{ 0 \ar[r] & A \ar[r] & B \ar[r] & C \ar[r] & 0 }
\]
then also $B\in\Q$. Quillen exact categories can also be characterized in terms of purely intrinsic axioms not referring to any embedding category~\cite{Bühler}. A \emph{Frobenius category} is then a Quillen exact category which has enough injectives and projectives, and such that the class of injective objects coincides with the class of projective objects. Some abelian categories themselves are Frobenius categories, such as the category of representations of a finite group; if the characteristic of the ground field divides the order of the group, then this category is typically not semisimple, and this makes the example non-trivial. More concretely, a non-trivial example of a Frobenius category is the category of modules over the ring $\Z/4\Z$. More generally, one can consider module categories of self-injective algebras, i.e.~algebras which are injective as a module over themselves.

One can \emph{stabilize} a Frobenius category $\Q$ by regarding two morphisms as equivalent if and only if their difference factors over an injective (or equivalently projective) object, and then forming the quotient category with respect to this equivalence relation. This \emph{stable category} is triangulated in a canonical way~\cite[Ch.~2]{Happel}: every $A\in\Q$ embeds into an injective object $I(A)\in\Q$ resulting in a short exact sequence
\[
\xymatrix{ 0 \ar[r] & A \ar[r] & I(A) \ar[r] & I(A)/A \ar[r] & 0 }
\]
and the suspension is then defined by $\Sigma A:= I(A)/A$. Any other short exact sequence
\[
\xymatrix{ 0 \ar[r] & A \ar[r] & B \ar[r] & C \ar[r] & 0 }
\]
gives rise to a triangle by considering the induced diagram
\[
\xymatrix{ 0 \ar[r] & A \ar[r] \ar@{=}[d] & B \ar[r] \ar[d] & C \ar[r] \ar[d] & 0 \\
	   0 \ar[r] & A \ar[r]            & I(A) \ar[r]     & \Sigma A \ar[r] & 0 }
\]
as a diagram in the stable category and restricting to $A\lra B\lra C\lra \Sigma A$.

The stable category of the category of modules over a self-injective algebra can also be characterized as a Verdier localization of the bounded derived category of the algebra~\cite[Thm.~2.1]{Rickard}.

\subsection{$K$-theory of $C^*$-algebras}

In algebraic topology, $K$-theory~\cite{HatcherK} is one of the most well-known cohomology theories. Moreover, $K$-theory can be defined more generally for $C^*$-algebras~\cite{RLL}, which play the role of ``noncommutative spaces''. In fact, $K$-theory for $C^*$-algebras, and hence for suitably nice topological spaces in particular, comes in a more general version known as $KK$-theory; see~\cite{Higson,Blackadar} for an introduction. This generalization is \emph{bivariant} in the sense that it is a functor $KK(\cdot,\cdot)$ taking two $C^*$-algebras as arguments, contravariantly in the first and covariantly in the second, and returning an abelian group. For three $C^*$-algebras $A,B,C$, there is a bilinear map
\[
\xymatrix{ KK(A,B) \:\times\: KK(B,C) \ar[r] & KK(A,C) }
\]
satisfying the appropriate naturality conditions. By virtue of this, $KK(\cdot,\cdot)$ is the hom-functor in a category also denoted $KK$, and in fact this category is triangulated~\cite{Meyer}. The suspension functor on $KK$ assigns to every $C^*$-algebra $A$ the $C^*$-algebra
\[
\Sigma A := \{ \: f : [0,1] \stackrel{\mathrm{cont.}}{\lra} A \:|\: f(0)=f(1)=0 \:\}.
\]
Under Gelfand duality---in the sense of the contravariant equivalence between the category of commutative $C^*$-algebras and the category of pointed compact Hausdorff spaces---this $\Sigma$ specializes to the usual reduced suspension of pointed topological spaces.

Despite its wide range of applications, $KK$-theory has a deficit: a short exact sequence of $C^*$-algebras
\[
\xymatrix{ 0 \ar[r] & A \ar[r] & B \ar[r] & C \ar[r] & 0 }
\]
does not in general yield a distinguished triangle in $KK$~\cite{Skandalis}. This is improved upon by $E$-theory, which is another triangulated category with $C^*$-algebras as objects which can be understood as the universal Verdier localization of $KK$ which turns all short exact sequences into triangles~\cite{Hig}.

It is known that both triangulated categories $KK$ and $E$ can be obtained as homotopy categories of categories of fibrant objects~\cite{Uuye}; here, the notion of ``category of fibrant objects'' generalizes that of model category (Example~\ref{model}). One can also obtain $KK$ as the homotopy category of a genuine model category if one generalizes from $C^*$-algebras to so-called $l.m.c.$-$C^*$-algebras~\cite{JJ}.

\subsection{Motives}
\label{motives}

In algebraic geometry, \emph{motives} are partly hypothetical objects forming a category which supposedly ``linearizes'' the category of algebraic varieties. More precisely, the category of motives should be the universal abelian category on which any algebro-geometric cohomology theory naturally operates. However, constructing such a category has been an open problem since the 1960's~\cite{Grothendieck}. Only for the case of projective varieties has a candidate category been constructed, and this is known as the \emph{category of pure motives}. At present, there are two variants of the category of pure motives: one using rational equivalence of algebraic cycles in its definition, and the other using numerical equivalence of algebraic cycles~\cite{Milne}. The former gives a category through which every cohomology theory factors, but this category is not abelian; the latter gives an abelian category, but proving that any Weil cohomology factors through it is one of the difficult open ``standard conjectures''.

In the general case of not necessarily projective varieties, one deals with \emph{mixed motives}, a significantly more difficult case. This is where triangulated categories come in: so far, only the \emph{derived} category of the desired abelian category of mixed motives has been constructed as a triangulated category~\cite{VSF}, and the hope is that one can recover a category of mixed motives from this triangulated category.

Motives also have applications beyond traditional algebraic geometry, for example to quantum field theory~\cite{Marcolli}.

\bigskip

This list of examples of triangulated categories is necessarily incomplete and biased by the background and interests of the author. For others, see e.g.~singularity categories in commutative algebra and algebraic geometry~\cite{Orlov}, (cohomological categories of) Fukaya categories of symplectic manifolds in symplectic geometry~\cite{Auroux}, or cluster categories in representation theory~\cite[3.2]{Keller}.

\section{Where to go from here?}

These notes may have given the reader some ideas of what triangulated categories are about, what technical tools there are available for working with them, and in what fields of mathematics they arise. What else is there to say? In which further directions has the general theory been developed?

Here, we would like to give a few more pointers to the literature concerning the general theory. Due to the breadth of applications of triangulated categories, this again necessarily represents a biased sample. First, as already mentioned in Remark~\ref{brownrep}, there is a general version of the Brown representability theorem (Example~\ref{spectra}); see e.g.~Chapter 8 of~\cite{Nee}, and also the related topic of Bousfield localization in Chapter 9, which is concerned with the existence of adjoints to a Verdier localization functor. Other notions relating to triangulated subcategories and localizations are those of \emph{left and right orthogonal subcategories} and \emph{recollements}. For these topics and more on localization in general, see~\cite{Krause}. Using these notions, one can define a \emph{semiorthogonal decomposition} of a triangulated category to consist of a finite sequence of triangulated subcategories from which one can ``build up'' the given triangulated category in a suitable sense; see e.g.~\cite{Orlov}. A prominent topic in modular representation theory is that of \emph{Serre functors} and \emph{Serre duality} of triangulated categories, leading up to the \emph{Calabi-Yau} property that certain triangulated categories enjoy~\cite{Keller}. The so-called \emph{t-structures} and \emph{hearts} figure prominently as additional data that one can use to reconstruct an abelian category from its derived category~\cite[IV.4]{GM}, and they are relevant to Example~\ref{motives} in that one can try to construct an abelian category of mixed motives in terms of a heart of the triangulated category of mixed motives. In many situation in practice, a triangulated category comes equipped with a monoidal structure. If the monoidal structure is suitably compatible with the triangulation, then one speaks of a \emph{tensor triangulated category}, for which a rich theory exists~\cite{Bal2}.

Finally, let us note that triangulated categories have been generalized to \emph{$n$-angulated categories}~\cite{nang}.

\section{Refining triangulated categories}
\label{refine}

Finally, it should be mentioned that there is widely known evidence for the hypothesis that triangulated categories are not the most adequate notion for describing the phenomena that they have been introduced for. One problem is that the formation of mapping cones is only weakly functorial (Proposition~\ref{fillingmorphism}); in fact, by a result of Verdier~\cite{Stev}, under reasonable conditions it is impossible to choose mapping cones in a functorial way. Related issues arise with respect to the formation of homotopy limits and colimits, which is difficult to achieve due to the fact that a triangulated category does not ``remember'' any homotopical information. There are several notions that refine triangulated categories and overcome these problems to different degrees by retaining more homotopical information than is present in a triangulated category; the most prominent of these notions are stable $(\infty,1)$-categories~\cite{Lurie}, stable derivators~\cite{Heller,Groth}, $A_\infty$-categories~\cite{BLM}, or pretriangulated dg-categories~\cite{BK}. Any instance of any of these structures can be turned into a triangulated category in a canonical way, and doing so in the case of the standard examples recovers precisely the usual triangulated categories of Section~\ref{se:ex}. There is also the notion of stable model category (Example~\ref{model}), and it has been hypothesized that ``every triangulated category that arises in nature is the homotopy category of a stable model category''~\cite[Ch.~7]{Hov}. However, since a stable model category can be thought of as a presentation of a stable $(\infty,1)$-category~\cite{Lurie}, this is subsumed by the first approach.

In this sense, the theory of triangulated categories should be studied with one caveat in mind: triangulated categories have many different uses, but it may be the case that all of these can better be captured by other categorical structures.

\bibliographystyle{intro_triang_cats}
\bibliography{intro_triang_cats}

\begin{thebibliography}{DHKS04}
\providecommand{\url}[1]{\texttt{#1}}
\providecommand{\urlprefix}{URL }
\providecommand{\eprint}[2][]{\url{#2}}

\bibitem[Aur13]{Auroux}
Denis Auroux.
\newblock \emph{A beginner's introduction to {F}ukaya categories} (2013).
\newblock \href{http://arxiv.org/abs/1301.7056}{arXiv.org:1301.7056}.

\bibitem[Bal02]{Bal}
Paul Balmer.
\newblock \emph{Triangulated Categories with several Triangulations} (2002).
\newblock
  \href{http://www.math.ethz.ch/~balmer/Pubfile/TriangulationS.ps}{math.ethz.ch/$\sim$balmer/Pubfile/TriangulationS.ps}.

\bibitem[Bal10]{Bal2}
Paul Balmer.
\newblock \emph{Tensor triangular geometry}.
\newblock In \emph{Proceedings of the {I}nternational {C}ongress of
  {M}athematicians. {V}olume {II}}, pp. 85--112. Hindustan Book Agency, New
  Delhi (2010).

\bibitem[BK90]{BK}
Alexei~I. Bondal and Mikhail~M. Kapranov.
\newblock \emph{Enhanced triangulated categories}.
\newblock Mat. Sb., 181(5) (1990).

\bibitem[Bla98]{Blackadar}
Bruce Blackadar.
\newblock \emph{{$K$}-theory for operator algebras}, volume~5 of
  \emph{Mathematical Sciences Research Institute Publications}.
\newblock Cambridge University Press, Cambridge, second edition (1998).

\bibitem[BLM08]{BLM}
Yu. Bespalov, V.~Lyubashenko, and O.~Manzyuk.
\newblock \emph{Pretriangulated {$A_\infty$}-categories}, volume~76 of
  \emph{{\cyr Prats\=\i\ \=Institutu Matematiki Nats\=\i onal$'$ no\"\i\
  Akadem\=\i \"\i\ Nauk Ukra\"\i ni. Matematika ta \"\i \"\i\ Zastosuvannya}
  [Proceedings of Institute of Mathematics of NAS of Ukraine. Mathematics and
  its Applications]}.
\newblock Nats\=\i onal$'$ na Akadem\=\i ya Nauk Ukra\"\i ni, \=Institut
  Matematiki, Kiev (2008).

\bibitem[BT82]{BottTu}
Raoul Bott and Loring~W. Tu.
\newblock \emph{Differential forms in algebraic topology}, volume~82 of
  \emph{Graduate Texts in Mathematics}.
\newblock Springer-Verlag, New York (1982).

\bibitem[B{\"u}h10]{Bühler}
Theo B{\"u}hler.
\newblock \emph{Exact categories}.
\newblock Expo. Math., 28(1):pp. 1--69 (2010).

\bibitem[Del04]{Del}
Ivo Dell'Ambrogio.
\newblock \emph{The Spanier-Whitehead category is always triangulated}.
\newblock Master's thesis, ETH Z{\"u}rich (2004).
\newblock
  \href{http://www.math.uni-bielefeld.de/~ambrogio/diploma.pdf}{math.uni-bielefeld.de/$\sim$ambrogio/diploma.pdf}.

\bibitem[DHKS04]{DHK}
William~G. Dwyer, Philip~S. Hirschhorn, Daniel~M. Kan, and Jeffrey~H. Smith.
\newblock \emph{Homotopy limit functors on model categories and homotopical
  categories}, volume 113 of \emph{Mathematical Surveys and Monographs}.
\newblock American Mathematical Society, Providence, RI (2004).

\bibitem[Dug08]{Dugger}
Daniel Dugger.
\newblock \emph{A primer on homotopy colimits} (2008).
\newblock
  \href{http://math.uoregon.edu/~ddugger/hocolim.pdf}{math.uoregon.edu/$\sim$ddugger/hocolim.pdf}.

\bibitem[Fri11]{Fri1}
Tobias Fritz.
\newblock \emph{Categories of fractions revisited}.
\newblock Morfismos, 15(2):pp. 19--38 (2011).
\newblock \href{http://www.arxiv.org/abs/0803.2587}{arXiv:0803.2597}.

\bibitem[GKO13]{nang}
Christof Geiss, Bernhard Keller, and Steffen Oppermann.
\newblock \emph{{$n$}-angulated categories}.
\newblock J. Reine Angew. Math., 675:pp. 101--120 (2013).

\bibitem[GM03]{GM}
Sergei~I. Gelfand and Yuri~I. Manin.
\newblock \emph{Methods of homological algebra}.
\newblock Springer Monographs in Mathematics. Springer-Verlag, Berlin, second
  edition (2003).

\bibitem[Gro69]{Grothendieck}
Alexander Grothendieck.
\newblock \emph{Standard conjectures on algebraic cycles}.
\newblock In \emph{Algebraic {G}eometry ({I}nternat. {C}olloq., {T}ata {I}nst.
  {F}und. {R}es., {B}ombay, 1968)}, pp. 193--199. Oxford Univ. Press, London
  (1969).

\bibitem[Gro13]{Groth}
Moritz Groth.
\newblock \emph{Derivators, pointed derivators and stable derivators}.
\newblock Algebr. Geom. Topol., 13(1):pp. 313--374 (2013).

\bibitem[Hap88]{Happel}
Dieter Happel.
\newblock \emph{Triangulated categories in the representation theory of
  finite-dimensional algebras}, volume 119 of \emph{London Mathematical Society
  Lecture Note Series}.
\newblock Cambridge University Press, Cambridge (1988).

\bibitem[Hat]{HatcherK}
Allen Hatcher.
\newblock \emph{Vector Bundles and $K$-theory}.
\newblock
  \href{http://www.math.cornell.edu/~hatcher/VBKT/VBpage.html}{math.cornell.edu/$\sim$hatcher/VBKT/VBpage.html}.

\bibitem[Hel88]{Heller}
Alex Heller.
\newblock \emph{Homotopy theories}.
\newblock Mem. Amer. Math. Soc., 71(383):pp. vi+78 (1988).

\bibitem[Hig90a]{Hig}
Nigel Higson.
\newblock \emph{Categories of fractions and excision in {$KK$}-theory}.
\newblock J. Pure Appl. Algebra, 65(2):pp. 119--138 (1990).

\bibitem[Hig90b]{Higson}
Nigel Higson.
\newblock \emph{A primer on {$KK$}-theory}.
\newblock In \emph{Operator theory: operator algebras and applications, {P}art
  1 ({D}urham, {NH}, 1988)}, volume~51 of \emph{Proc. Sympos. Pure Math.}, pp.
  239--283. Amer. Math. Soc., Providence, RI (1990).

\bibitem[Hov99]{Hov}
Mark Hovey.
\newblock \emph{Model categories}, volume~63 of \emph{Mathematical Surveys and
  Monographs}.
\newblock American Mathematical Society, Providence, RI (1999).

\bibitem[JJ06]{JJ}
Michael Joachim and Mark~W. Johnson.
\newblock \emph{Realizing {K}asparov's {$KK$}-theory groups as the homotopy
  classes of maps of a {Q}uillen model category}.
\newblock In \emph{An alpine anthology of homotopy theory}, volume 399 of
  \emph{Contemp. Math.}, pp. 163--197. Amer. Math. Soc., Providence, RI (2006).

\bibitem[Kel]{Keller}
Bernhard Keller.
\newblock \emph{Calabi-Yau triangulated categories}.
\newblock
  \href{http://www.math.jussieu.fr/~keller/publ/KellerCYtriangCat.pdf}{math.jussieu.fr/$\sim$keller/publ/KellerCYtriangCat.pdf}.

\bibitem[Kra10]{Krause}
Henning Krause.
\newblock \emph{Localization theory for triangulated categories}.
\newblock In \emph{Triangulated categories}, volume 375 of \emph{London Math.
  Soc. Lecture Note Ser.}, pp. 161--235. Cambridge Univ. Press, Cambridge
  (2010).

\bibitem[Lur06]{Lurie}
Jacob Lurie.
\newblock \emph{Stable Infinity Categories} (2006).
\newblock
  \href{http://arxiv.org/abs/math/0608228}{http://arxiv.org/abs/math/0608228}.

\bibitem[Mar83]{Margolis}
Harvey~R. Margolis.
\newblock \emph{Spectra and the {S}teenrod algebra}, volume~29 of
  \emph{North-Holland Mathematical Library}.
\newblock North-Holland Publishing Co., Amsterdam (1983).

\bibitem[Mar10]{Marcolli}
Matilde Marcolli.
\newblock \emph{Feynman motives}.
\newblock World Scientific Publishing Co. Pte. Ltd., Hackensack, NJ (2010).

\bibitem[May]{May}
Peter May.
\newblock \emph{The axioms for triangulated categories}.
\newblock
  \href{http://www.math.uchicago.edu/~may/MISC/Triangulate.pdf}{math.uchicago.edu/$\sim$may/MISC/Triangulate.pdf}.

\bibitem[May01]{MayAdd}
Peter May.
\newblock \emph{The additivity of traces in triangulated categories}.
\newblock Adv. Math., 163(1):pp. 34--73 (2001).

\bibitem[Mey08]{Meyer}
Ralf Meyer.
\newblock \emph{Categorical aspects of bivariant {$K$}-theory}.
\newblock In \emph{{$K$}-theory and noncommutative geometry}, EMS Ser. Congr.
  Rep., pp. 1--39. Eur. Math. Soc., Z\"urich (2008).

\bibitem[Mil12]{Milne}
James Milne.
\newblock \emph{Motives --- Grothendieck's Dream} (2012).
\newblock
  \href{http://www.jmilne.org/math/xnotes/mot.html}{jmilne.org/math/xnotes/mot.html}.

\bibitem[ML98]{Mac}
Saunders Mac~Lane.
\newblock \emph{Categories for the working mathematician}, volume~5 of
  \emph{Graduate Texts in Mathematics}.
\newblock Springer-Verlag, New York, second edition (1998).

\bibitem[MSS07]{MSS}
Fernando Muro, Stefan Schwede, and Neil Strickland.
\newblock \emph{Triangulated categories without models}.
\newblock Invent. Math., 170(2):pp. 231--241 (2007).

\bibitem[MV99]{MV}
Fabien Morel and Vladimir Voevodsky.
\newblock \emph{{${\bf A}^1$}-homotopy theory of schemes}.
\newblock Inst. Hautes \'Etudes Sci. Publ. Math., 90:pp. 45--143 (1999).

\bibitem[Nee01]{Nee}
Amnon Neeman.
\newblock \emph{Triangulated categories}, volume 148 of \emph{Annals of
  Mathematics Studies}.
\newblock Princeton University Press, Princeton, NJ (2001).

\bibitem[Orl09]{Orlov}
Dmitri Orlov.
\newblock \emph{Derived Categories of Coherent Sheaves and Triangulated
  Categories of Singularities}.
\newblock In \emph{Algebra, arithmetic, and geometry: in honor of {Y}u. {I}.
  {M}anin. {V}ol. {II}}, volume 270 of \emph{Progr. Math.}, pp. 503--531.
  Birkh\"auser Boston, Inc., Boston, MA (2009).

\bibitem[Qui67]{Quillen}
Daniel~G. Quillen.
\newblock \emph{Homotopical Algebra}, volume~43 of \emph{Lectures Notes in
  Mathematics}.
\newblock Springer (1967).

\bibitem[Ric89]{Rickard}
Jeremy Rickard.
\newblock \emph{Derived categories and stable equivalence}.
\newblock J. Pure Appl. Algebra, 61(3):pp. 303--317 (1989).

\bibitem[Rie14]{Riehl}
Emily Riehl.
\newblock \emph{Categorical Homotopy Theory}.
\newblock Cambridge University Press (2014).
\newblock
  \href{http://www.math.harvard.edu/~eriehl/cathtpy.pdf}{math.harvard.edu/$\sim$eriehl/cathtpy.pdf}.

\bibitem[RLL00]{RLL}
M.~R{\o}rdam, F.~Larsen, and N.~Laustsen.
\newblock \emph{An introduction to {$K$}-theory for {$C^*$}-algebras},
  volume~49 of \emph{London Mathematical Society Student Texts}.
\newblock Cambridge University Press, Cambridge (2000).

\bibitem[Sch72]{Sch1}
Horst Schubert.
\newblock \emph{Categories}.
\newblock Springer-Verlag, New York (1972).
\newblock Translated from the German by Eva Gray.

\bibitem[Sch06a]{Schm}
Jan-Philip Schmidt.
\newblock \emph{An associativity law for mapping cones and higher octahedron
  axioms for derived categories}.
\newblock Master's thesis, University of Heidelberg (2006).
\newblock
  \href{http://www.iwr.uni-heidelberg.de/groups/amj/People/Philip.Schmidt/diploma_thesis.pdf}{iwr.uni-heidelberg.de/groups/amj/People/Philip.Schmidt/diploma\_thesis.pdf}.

\bibitem[Sch06b]{Schwede}
Stefan Schwede.
\newblock \emph{Algebraic versus topological triangulated categories} (2006).
\newblock
  \href{http://www.math.uni-bonn.de/people/schwede/leeds.pdf}{math.uni-bonn.de/people/schwede/leeds.pdf}.

\bibitem[Sch12]{Schwede2}
Stefan Schwede.
\newblock \emph{Topological triangulated categories} (2012).
\newblock \href{http://arxiv.org/abs/1201.0899}{arXiv:1201.0899}.

\bibitem[Ska91]{Skandalis}
Georges Skandalis.
\newblock \emph{Le bifoncteur de {K}asparov n'est pas exact}.
\newblock C. R. Acad. Sci. Paris S\'er. I Math., 313(13):pp. 939--941 (1991).

\bibitem[Ste]{Stev}
Greg Stevenson.
\newblock \emph{On the failure of functorial cones in triangulated categories}.
\newblock
  \href{http://www.math.uni-bielefeld.de/~gstevens/no_functorial_cones.pdf}{math.uni-bielefeld.de/$\sim$gstevens/no\_functorial\_cones.pdf}.

\bibitem[Str]{Strickland}
Neil~P. Strickland.
\newblock \emph{An introduction to the category of spectra}.
\newblock
  \href{https://neil-strickland.staff.shef.ac.uk/research/stableintro.pdf}{https://neil-strickland.staff.shef.ac.uk/research/stableintro.pdf}.

\bibitem[Uuy13]{Uuye}
Otgonbayar Uuye.
\newblock \emph{Homotopical Algebra for $C^*$-algebras}.
\newblock J. Noncommut. Geom., 7:pp. 981--1006 (2013).

\bibitem[Ver67]{Ver}
Jean-Louis Verdier.
\newblock \emph{Des cat\'egories d\'eriv\'ees des cat\'egories ab\'eliennes}.
\newblock Ast\'erisque, 239 (1996\footnote{The date of the publication is
  misleading since the thesis itself was actually written in 1967.}).
\newblock With a preface by Luc Illusie, Edited and with a note by Georges
  Maltsiniotis.

\bibitem[VSF00]{VSF}
Vladimir Voevodsky, Andrei Suslin, and Eric~M. Friedlander.
\newblock \emph{Cycles, transfers, and motivic homology theories}, volume 143
  of \emph{Annals of Mathematics Studies}.
\newblock Princeton University Press, Princeton, NJ (2000).

\bibitem[Wei94]{Weibel}
Charles Weibel.
\newblock \emph{An introduction to homological algebra}.
\newblock Cambridge Studies in Advanced Mathematics. Cambridge University
  Press, Cambridge (1994).

\end{thebibliography}

\end{document}